\DeclareMathOperator{\tr}{tr}
\DeclareMathOperator{\id}{id}
\DeclareMathOperator{\Map}{Map}
\DeclareMathOperator{\Ad}{Ad}
\DeclareMathOperator{\pr}{pr}
\DeclareMathOperator{\ev}{ev}
\renewcommand{\sec}{\epsilon_s}   
\newcommand{\scc}{c_s}   
\newcommand{\ec}{\epsilon}  
\theoremstyle{plain}
\newtheorem{theorem}{Theorem}[section]
\newtheorem{lemma}[theorem]{Lemma}
\newtheorem{proposition}[theorem]{Proposition}
\theoremstyle{definition}
\newtheorem{definition}[theorem]{Definition}
\theoremstyle{remark}
\newtheorem{example}{Example}[section]
\newtheorem{remark}{Remark}[section]
\numberwithin{equation}{section}
\numberwithin{figure}{section}
\newcommand{\cH}{{\mathcal H}}
\newcommand{\cG}{\mathcal{G}}
\newcommand{\cA}{{\mathcal A}}
\newcommand{\cK}{{\mathcal K}}
\newcommand{\cB}{{\mathcal B}}
\newcommand{\sC}{{\mathscr{C}}}
\newcommand{\CC}{{\mathbb C}}
\newcommand{\RR}{{\mathbb R}}
\newcommand{\ZZ}{{\mathbb Z}}
\renewcommand{\a}{\alpha}
\renewcommand{\b}{\beta}
\renewcommand{\d}{\delta}
\newcommand{\norm}[1]{\lVert {#1} \rVert}
\DeclareMathOperator{\String}{String}
\newcommand\longrightthreearrow{%
        \mathrel{\vcenter{\mathsurround0pt
                \ialign{##\crcr
                        \noalign{\nointerlineskip}$\longrightarrow$\crcr
                        \noalign{\nointerlineskip}$\longrightarrow$\crcr
                        \noalign{\nointerlineskip}$\longrightarrow$\crcr
                }%
        }}%
}
\newcommand{\xm}{\textsf{xm}}
\newcounter{anote}
\begin{document}

\title[Equivariant bundle gerbes]{Equivariant bundle gerbes}
  \author[M.\ K.\ Murray]{Michael K.\ Murray}
  \address[Michael Murray]
  {School of Mathematical Sciences\\
  University of Adelaide\\
  Adelaide, SA 5005 \\
  Australia}
  \email{michael.murray@adelaide.edu.au}

  \author[D.\ M.\ Roberts]{David Michael Roberts}
  \address[David M.\ Roberts]
{School of Mathematical Sciences\\
University of Adelaide\\
  Adelaide, SA 5005 \\
  Australia}
  \email{david.roberts@adelaide.edu.au}

\author[D.\ Stevenson]{Danny Stevenson}
\address[Danny Stevenson]
{School of Mathematical Sciences\\
University of Adelaide\\
  Adelaide, SA 5005 \\
  Australia}
  \email{daniel.stevenson@adelaide.edu.au}

  \author[R.\ F.\ Vozzo]{Raymond F.\ Vozzo}
\address[Raymond Vozzo]
{School of Mathematical Sciences\\
University of Adelaide\\
  Adelaide, SA 5005 \\
  Australia}
\email{raymond.vozzo@adelaide.edu.au}

\thanks{This research was supported under Australian
Research Council's {\sl Discovery Projects} funding scheme (project numbers DP120100106 and DP130102578).}

\subjclass[2010]{18G30, 55R91, 53C80}

\begin{abstract} We develop the theory of   simplicial extensions for  bundle
gerbes and  their characteristic classes  with a view towards studying descent problems and equivariance for bundle gerbes. 
{Equivariant bundle gerbes are important in the study of orbifold sigma models}.  We consider in detail two 
examples: the basic bundle gerbe on a unitary group and a string structure for a principal bundle.  We show that the 
basic bundle gerbe is equivariant for the conjugation action  and calculate its characteristic  class; we show also that a string 
structure gives rise to a bundle gerbe which is equivariant for a natural action of the String 2-group. 

\end{abstract}
\maketitle

\tableofcontents

\section{Introduction}

Bundle gerbes were introduced by Murray in \cite{Mur} as a simpler alternative to the gerbes with band $U(1)$ described in the monograph \cite{BryBook} of Brylinski.  Bundle gerbes on a manifold $M$ are generalizations of the notion of line bundle on $M$: this fact is manifested in several ways, one of which is the existence of a characteristic class in $H^3(M,\ZZ)$, generalizing the Chern class of a line bundle.  In another direction, the notion of bundle gerbe allows for a particularly simple theory of connections and curving leading to a differential form representative for this characteristic class, the {\em 3-curvature} of the bundle gerbe connection and curving.  Crucially for applications to physics, bundle gerbes give rise to a notion of holonomy over a closed surface, generalizing the notion of the holonomy of a line bundle with connection around a loop.

There are many key examples of bundle gerbes with their origin in physical problems; for example in the study of anomalies in quantum field theory \cite{CarMicMur, CarMurMic2, HMSV}; (together with the allied notion of bundle gerbe module) in the study of $D$-brane charges in string theory \cite{BCMMS,Ho,K,MM,MW}; in the study of Chern--Simons theory \cite{CarJohMur, Mick} and its relation to string structures \cite{Wal}; and recently in the study of topological insulators \cite{CarDeletal, Gaw2}. The example which serves to motivate this paper is the role that bundle gerbes and bundle gerbe connections play in 2D sigma models with a Wess--Zumino term in the action functional.  The fields in such a theory are taken to be maps $\phi\colon \Sigma \to M$, where $M$ is the target manifold of the theory which is equipped with a closed 3-form $H$.  Locally, the Wess--Zumino term $S_{W\!Z}(\phi)$ is described by integrals over $\Sigma$ of $\phi^*B$, where $B$ is a local 2-form on $M$ solving the equation $dB = H$---the so-called {\em $B$-field}. In topologically non-trivial situations there are ambiguities which arise from the various choices that must be made in such a construction.  These ambiguities were analyzed by Gawedzki in \cite{Gaw} using the theory of Deligne cohomology, a certain hybrid of \v{C}ech and de Rham cohomology.  If one interprets the closed 3-form $H$ as the field strength or 3-curvature of a bundle gerbe with connection on $M$, this analysis can be carried out much more systematically and succinctly leading to an identification of the Feynmann amplitudes $\exp(iS_{W\!Z}(\phi))$ in terms of the holonomy of this bundle gerbe with connection \cite{CarMurMic2}.

This point of view is particularly well-adapted to the study of Wess--Zumino--Witten sigma models in which the target space is a compact Lie group $G$.  In particular, when $G$ is simple and simply connected, there is a canonical bundle gerbe with connection on $G$, the so-called {\em basic} bundle gerbe on $G$.  The case where the target manifold is a non-simply connected Lie group arising as the quotient of the simply connected cover $\tilde{G}$ by a finite subgroup $\Gamma$ of the center leads naturally to the notion of a $\Gamma$-equivariant gerbe on $\tilde{G}$. More generally, one can consider the notion of a $\Gamma$-equivariant gerbe on a manifold $M$ upon which $\Gamma$ acts; such an equivariant gerbe amounts to the notion of an ordinary gerbe on the orbifold $M/\Gamma$.  These equivariant gerbes can be used to give a similar description of the Wess--Zumino term when the target manifold is such an orbifold.  A natural question is how to extend this theory beyond the case of orbifolds, to the case where a compact Lie group acts smoothly on the manifold $M$.  This is the motivation for the present work which develops the theory of equivariant bundle gerbes; that is, we have a  bundle gerbe $\cG = (P, Y)$  over a manifold where $Y \to M$ is a surjective submersion and a Lie group $G$ acts smoothly on the right of $M$, and we want to investigate in what way this group action can be lifted to $\cG$.  This question has been studied in \cite{Bry, Cha} for bundle gerbes described by local data over an open cover of $M$, as well as in \cite{Ben, Gom} and notably in the general context of higher geometry in \cite{NikSch}. Our approach is to be contrasted with abstract approaches using higher categories in that one often wants, for the purposes of geometry and physics, specific manifolds and explicit descriptions of geometric objects (such as differential forms).

\subsection{Equivariance and simplicial extensions}

A convenient way of studying equivariant bundle gerbes is to use the theory of simplicial manifolds. To see why this is the case, and to motivate our constructions below, consider first the simpler case of an equivariant $U(1)$-bundle $P \to M$.  Then a right $G$ action on $P$ is a family of 
bundle maps $\phi_g \colon P \to P$, each covering the action of the corresponding $g \in G$ and satisfying $\phi_g \phi_h = \phi_{gh}$.  Because all our objects are smooth we would like the bundle
maps $\phi_g$ to depend smoothly on $g$ and a simple way to do that is to introduce the manifold
$ M \times G$ and two maps $d_0, d_1 \colon M \times G \to M$ defined by $d_0(m, g) = mg$ and $d_1(m, g) = m$. Then the bundle $d_0^{-1}(P) \otimes d_1^{-1}(P)^*$ has fibre at $(m, g)$ given by $P_{mg} \otimes P^*_m$ and the bundle maps $\phi$ can all be combined to give  a section of  $d_0^{-1}(P) \otimes d_1^{-1}(P)^*$ whose value at $(m, g) $ is $(\phi_g)_m(p) \otimes p^*$ where $p \in P_m$.  The condition that $\phi_g \phi_h = \phi_{gh}$ now becomes an equation on $M \times G^2$.  Returning to the case of bundle gerbes, it is natural to replace the idea of an isomorphism with a stable isomorphism and then the
condition  $\phi_g  \phi_h = \phi_{gh}$ may not hold exactly but rather up to a map $c_{g,h}$ between the stable  isomorphisms $\phi_g \phi_h$ and $\phi_{gh}$. In this case there is  
a coherence condition on the isomorphisms: $c_{g,h}c_{gh, k} = c_{g, hk} c_{h,k}$, which lives over $M \times G^3$. The manifolds $M, M\times G, M \times G^2 , \dots$ form a simplicial manifold---the nerve of the action groupoid---which we discuss further in Section \ref{sec:ss&bg}.

Now that we are in the setting of simplicial manifolds it becomes natural to generalise the idea above and formulate a notion of {\em simplicial extension}. In the simplest form this starts with a simplicial manifold $X_\bullet$ and a bundle gerbe 
$\cG = (P, Y)$ over $X_0$.  The definition of a simplicial extension then mimics the equivariance condition above.  We leave the detail for discussion in Section \ref{S:simplicial extensions} but note here some geometric consequences.  Firstly, given a simplicial manifold $X_\bullet$ there is an infinite-dimensional space $\| X_\bullet \|$, called the fat geometric realisation of $X_\bullet$, which contains a copy of $X_0$. Roughly speaking the existence of a simplicial extension is equivalent to the existence of an extension of the bundle gerbe $\cG$ from $X_0$ to $\| X_\bullet \|$. We do not prove this fact here but it motivates the choice of name. Secondly, we can realise the real cohomology of $\| X_\bullet \|$ in terms of de Rham classes on the various $X_k$ and this is denoted by $H^n(X_\bullet, \RR)$. There is a natural map 
 $$
 H^n(X_\bullet, \RR) \to H^n(X_0, \RR)
 $$
  for every $n\geq 0$ corresponding to the pullback from $\| X_\bullet \|$ to $X_0$.  A simplicial extension  of $\cG$ defines a class in $ H^3(X_\bullet, \RR)$, which we call the extension class of the simplicial extension, and this  maps to the real Dixmier--Douady class of the bundle gerbe $\cG$ in $H^3(X_0, \RR)$. 
  
By working with simplicial manifolds we can also consider the descent problem for bundle gerbes. This has been considered for bundle gerbes described by local data on an open cover of a manifold in \cite{Mei,Ste2000}. If $M \to N$ is a surjective submersion and $\cG$ a bundle gerbe on $M$ then the existence of descent data for $\cG$ is precisely the condition for $\cG$ to descend to a bundle gerbe on $N$.  We show in Section \ref{S:simplicial extensions} that such descent data is exactly a simplicial extension for the natural simplicial manifold $M, M^{[2]}, M^{[3]}, \ldots,$ where $M^{[k]}$ is the $k^{\text{th}}$ fibre product of $M$ with itself over $N$.
  This result is of interest in its own right but also important in understanding the descent of 
  equivariant bundle gerbes when the action of $G$ on $M$ arises from a principal $G$-bundle $M\to N$.
There are two natural notions of group action on a bundle gerbe; there is a {\em strong action} \cite{Gom2, MatSte, Mei}, where the group action on $M$ lifts to $Y \to M$ and also to $P \to Y^{[2]}$ and commutes with the bundle gerbe product; there is also the notion of \emph{weak action}, which corresponds to the general simplicial extension setting where essentially the group acts on $\cG$ by stable isomorphisms.  In Section \ref{S:eq} we show that for both strong and weak $G$ actions 
on a bundle gerbe $\cG$ over the total space of a principal $G$-bundle $M \to N$ there is a natural 
notion of quotient or descended gerbe on $N$.  In addition we show that a strong action induces a weak action and that the corresponding quotients agree, up to a specified stable isomorphism.

In \cite{MurSte} two of the authors gave a construction of the basic bundle gerbe $\cB_n$ on a unitary group $U(n)$.  In that work we discussed the fact that the conjugation action on $U(n)$ lifted
to a strong action of $U(n)$ on $\cB_n$.  In Theorem \ref{thm:equivariant gerbe on U(n)} we construct the 
extension class of this action and note that, in particular, it is non-trivial even in the case of $U(1)$ where $\cB_1$ and the conjugation action are both trivial. 

\subsection{2-group actions}

We also consider the case of an action of a 2-group on a manifold. This statement will need some unpacking. Firstly, a 2-group\footnote{not a $p$-group for $p=2$!} is a monoidal groupoid such that for each object of the groupoid there is another object that is an inverse, possibly only up to isomorphism. For the purposes of this article we will only introduce \emph{strict} 2-groups, where associativity holds, inverses are honest inverses and so on. This allows us to use the equivalent but less complicated crossed modules. Also, we are interested in using not just bare groupoids, but Lie groupoids, and so \emph{Lie} 2-groups. Many known Lie 2-groups, and the ones used in this article, arise as \emph{2-group extensions} of ordinary Lie groups. In our case, we take a Lie group $G$ with certain properties, and consider the String 2-group, which fits into an extension
\[
  \mathbf{B}U(1) \to \String_G \to G
\]
for a certain uncomplicated 2-group $\mathbf{B}U(1)$.  A lift of the structure group of a principal $G$-bundle $P$ to the group $\String_G$ is called a \emph{string structure} on $P$. These were first considered by Killingback in the context of heterotic string theory in \cite{Kil} (see also \cite{MurSte03, Wit}). The topology and geometry of string structures is also important in Witten's famous paper on the Dirac operator on loop spaces \cite{Wit2} and in Stolz and Teichner's program on elliptic cohomology \cite{ST}.

It is not difficult to ask for an action of a 2-group on a manifold (all 2-groups will be Lie 2-groups from now on) and it follows from the definition that such an action for $\String_G$ factors through the map to $G$. The reverse also holds: given a $G$ action, we can induce an action of $\String_G$.

This, then, is the context in which we look at bundle gerbes that are equivariant under the action of the 2-group $\String_G$ on a manifold. While the action factors through $G$, and so may appear uninteresting, an analogue of the discussion above for ordinary group actions becomes much more complicated; here the use of simplicial manifolds and simplicial extensions comes into its own. We shall leave the details for Section 6, but what we do is consider a \emph{string structure} for a principal $G$-bundle $P$, which can be given by a bundle gerbe on $P$ and some extra data. This bundle gerbe is \emph{not} $G$-equivariant, but it is $\String_G$-equivariant as we shall see in Theorem \ref{th:string}.

\subsection{Summary}

We start in Section 2 with a review of bundle gerbes and various simplicial objects that we need in our subsequent discussion. In Section 3 we present the general definition of our basic notion of a 
{\em simplicial extension} of a bundle gerbe. We present a number of examples and define the simplicial class of a simplicial extension.  Our first application uses the notion of simplicial extension to define a general descent condition for bundle gerbes $\cG$ over $M$ where $M \to N$ is a surjective submersion.  Our second application of simplicial extensions  in Section 4 is to define the notion of \emph{weak group action} on a bundle gerbe.  We show how it relates to the more obvious concept of \emph{strong group  action} and use the idea of descent to define the quotient of a bundle gerbe by a strong or weak group action. 
We also define equivariant classes for  strong and weak group actions. In Section 5 we consider the basic bundle gerbe on a unitary group defined by the first and third authors in \cite{MurSte} and show that it is  strongly equivariant under the conjugation action of $U(n)$ on itself. We give an equivariant connective structure and use this to calculate its strongly equivariant class, which is non-trivial even in the case of $U(1)$. 
Section 6 starts with some preliminary material on {crossed modules} and {bundle 2-gerbes}. We then show that a string structure for a principal $G$-bundle $P$, viewed as a trivialisation of the Chern--Simons bundle 2-gerbe of $P$, gives rise to a natural simplicial extension, meaning that it is equivariant for the natural action of $\String_G$ on $P$.


\section{Background on simplicial manifolds and bundle gerbes}
\label{sec:ss&bg}

\subsection{Simplicial manifolds}\label{SS:ss}

We recall some facts about simplicial objects in a category $\sC$ (see for example \cite{Bott, Dup, GoerJar}). We will mostly be interested in the category of smooth manifolds. Let $\triangle$ be the simplex category, whose objects are the finite ordinal sets $[0] = \{0\}, [1] = \{0, 1 \}, \dots$ and whose morphisms are order-preserving maps. A \emph{simplicial object in $\sC$} is a contravariant functor from $\triangle$ to $\sC$. A morphism between two simplicial objects is a natural transformation between the two functors defining them.

In more concrete terms, for the category of smooth manifolds, a simplicial object (i.e.\  a \emph{simplicial manifold}) is a sequence of manifolds $X_0, X_1, X_2, X_3, \dots$ together with maps $\alpha^*\colon X_j \to X_i$ for every arrow $\a\colon [i ]\to [j]$ in $\triangle$, 
satisfying the compatibility condition $\beta^*\alpha^* = (\alpha\beta)^*$. It is a standard fact that these can all be written in terms of a certain collection of maps $d_i \colon X_{p} \to X_{p-1} \ (i = 0, \ldots p)$ and  $s_i\colon X_p\to X_{p+1} \ (i = 0, \ldots p)$ called \emph{face} and \emph{degeneracy} maps, respectively, and satisfying the so-called \emph{simplicial identities} (see \cite{Dup}). Sometimes, only the face maps of a simplicial object will be important for us and we can ignore 
the degeneracies.  In such a case we will speak of a {\em semi-simplicial} object, eg.\  a semi-simplicial 
manifold. The face map $d_k \colon X_{p} \to X_{p-1}$ corresponds to the map $[{p-1}] \to [p]$ whose image does not contain $k$. We will typically denote a simplicial manifold $X_0, X_1, X_2, \dots$ by $X_\bullet$. A morphism of simplicial manifolds $Y_\bullet \to X_\bullet$ consists of a sequence of maps $Y_k \to X_k$ commuting with the 
face and degeneracy maps.

The following examples will be useful throughout the paper.

\begin{example}\label{ex:constant}
Let $X$ be a manifold. We define $X^{(\bullet)}$ to be the constant simplicial manifold with all face and degeneracy maps equal to the identity. Notice that if $X_\bullet$ is a simplicial manifold, then there is a map $X_0 \to X_k$ corresponding to the unique map $[k] \to [0]$ and this gives rise to a simplicial map $X_0^{(\bullet)} \to X_{\bullet}$.
\end{example}

\begin{example}\label{ex:cartesian}
Let $X$ be a manifold. Define $X^{\bullet+1}$ by $X^{k+1} = \Map ([k], X)$, with the simplicial maps $X^{i} \to X^{j}$ given by pullback by $[j] \to [i]$. Notice that $X^{k+1}$ is the cartesian product of $X$ and the face maps are given by omitting factors.
\end{example}

\begin{example}\label{ex:fibre prod}
Let $Y \to X$ be a submersion and let $Y^{[k]}$ be the fibre product of $k$ copies of $Y$. This defines a simplicial manifold $Y^{[\bullet+1]}$, where the simplicial maps are induced by restricting those of the cartesian product $Y^{\bullet + 1}$.

\end{example}

\begin{example}\label{ex:EG(M)}
If $M $ is a  manifold on which a Lie group $G$ acts smoothly we define a simplicial manifold $EG(M)_\bullet $ by $EG(M)_n = M \times G^n  $ for $n \geq 0$. The 
face maps are: 
   \begin{align*}
\label{eq:identities}
     d_k( m, g_1, \dots, g_n)  &= 
     \begin{cases}
 (mg_1, g_2, \dots, g_n)    & k = 0 \\
 (m, g_1, \dots, g_k g_{k+1}, \dots, g_n) & k = 1, 2, \dots, n-1\\
 (m, g_1, \dots,  g_{n-1}) & k = n.\\
      \end{cases}
      \end{align*}
In particular $X_1 =  M \times G$ and $X_0 = M$ and the two face maps $X_1 \to X_0$ are $d_0(m, g) = mg$ and $d_1(m, g) = m$.
\end{example}

In the case that $M \to N$ is a principal $G$-bundle then the simplicial manifolds in Examples \ref{ex:fibre prod} and \ref{ex:EG(M)} are isomorphic:

\begin{lemma}\label{L:EG(M)=M^[bullet]}
If  $M \to N$ is a $G$-bundle  then $EG(M)_\bullet \simeq M^{[\bullet+1]}$.
\end{lemma}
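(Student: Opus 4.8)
The plan is to construct an explicit level-wise diffeomorphism $\psi_\bullet\colon EG(M)_\bullet \to M^{[\bullet+1]}$ and to check that it commutes with all the face and degeneracy maps. The guiding idea is that for a principal $G$-bundle the \emph{shear map} $M \times G \to M^{[2]}$, $(m,g)\mapsto(m,mg)$, is a diffeomorphism; its inverse defines a smooth \emph{division map} $\tau\colon M^{[2]}\to G$, characterised by $m_1\,\tau(m_1,m_2)=m_2$. I would promote this to all simplicial levels using iterated products.

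Concretely, writing a point of $M^{[n+1]}$ as a tuple $(m_0,\dots,m_n)$ of points of $M$ lying over a common point of $N$, I would define $\psi_n\colon M\times G^n \to M^{[n+1]}$ by
\[
  \psi_n(m,g_1,\dots,g_n) = \big(m,\ mg_1,\ mg_1g_2,\ \dots,\ mg_1\cdots g_n\big).
\]
Each $\psi_n$ is smooth, and its inverse
\[
  \psi_n^{-1}(m_0,\dots,m_n) = \big(m_0,\ \tau(m_0,m_1),\ \tau(m_1,m_2),\ \dots,\ \tau(m_{n-1},m_n)\big)
\]
is smooth because $\tau$ is, so each $\psi_n$ is a diffeomorphism. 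It is precisely the smoothness of $\tau$---equivalently, the fact that the shear map is a diffeomorphism---that requires $M\to N$ to be a \emph{principal} $G$-bundle, rather than merely a submersion carrying a $G$-action. This is the one genuinely non-formal input; everything else is bookkeeping with partial products.

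It then remains to verify that $\psi_\bullet$ is simplicial, i.e. $\psi_{n-1}\circ d_k = d_k\circ\psi_n$ for every face map, where on $M^{[n+1]}$ the map $d_k$ omits the $k$-th entry. I would check the three cases of Example~\ref{ex:EG(M)} in turn. For $k=0$, the element $mg_1$ becomes the new initial point, matching deletion of $m_0$; for $1\le k\le n-1$, replacing $g_k,g_{k+1}$ by $g_kg_{k+1}$ collapses the partial product at level $k$, so the coordinate $m_k = mg_1\cdots g_k$ is exactly the one deleted; and for $k=n$, dropping $g_n$ deletes the final point $m_n$. Each case is a one-line computation with the partial products.

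Finally, the degeneracy maps are handled the same way: the degeneracy on $EG(M)_\bullet$ inserts an identity $e$ into the word of group elements, which under $\psi$ repeats the corresponding partial product and so matches the degeneracy on $M^{[n+1]}$ that repeats a point. I would remark that this verification is entirely routine and omit the details. The main obstacle, as noted, is not any of these compatibility checks but the smoothness of the division map $\tau$, which is where principality of $M\to N$ is essential.
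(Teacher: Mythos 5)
Your proposal is correct and uses exactly the same map as the paper, namely $(m,g_1,\dots,g_n)\mapsto(m,mg_1,\dots,mg_1\cdots g_n)$; the paper simply states this map and asserts the simplicial compatibility is easily checked, whereas you additionally make explicit the inverse via the smooth division map and sketch the face/degeneracy verifications. Your observation that principality is what makes the inverse smooth is the right one and is implicit in the paper's claim that this is an isomorphism.
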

\begin{proof}
The isomorphism is given by maps $EG(M)_n  \to M^{[n]}$ by
$$
(m, g_1, g_2, \dots, g_n ) \mapsto (m, mg_1, mg_1g_2, \dots, mg_1g_2 \dots g_n).
$$
It can be easily checked that this defines a simplicial map.
\end{proof}

\begin{example}\label{ex:EK(M)}
If $M$ is a manifold and $\cK$ is a crossed module (Definition~\ref{D:crossed module}) that acts on $M$ 
(Definition~\ref{D:crossed module action}), then there is a simplicial manifold $E \cK (M)_\bullet$ similar to the one defined in Example \ref{ex:EG(M)} for a Lie group. We will use this simplicial manifold in Section \ref{S:string}, where we will give a precise definition.
\end{example}

We call a simplicial object in the category of surjective submersions a \emph{simplicial surjective submersion}. Explicitly, this will be a pair of simplicial manifolds $Y_\bullet$ and $X_\bullet$ and a simplicial map $Y_\bullet \to X_\bullet$ with the 
property that $Y_k \to X_k$ is a surjective submersion for all $k\geq 0$.  Note that simplicial surjective 
submersions are preserved under pullback, in the sense that if $Y_\bullet\to X_\bullet$ 
is a simplicial surjective submersion, then so is the induced map $Y_\bullet\times_{X_\bullet} Z_\bullet\to Z_\bullet$ 
for any simplicial map $Z_\bullet\to X_\bullet$.  

Let $X_\bullet$ be a simplicial manifold and $Y \to X_k$ a surjective submersion. Define $\delta(Y) \to X_{k+1}$
by
$$
\delta(Y) = d_0^{-1}(Y) \times_{X_{k+1}} d_1^{-1}(Y) \times_{X_{k+1}} d_2^{-1}(Y) \times_{X_{k+1}} \cdots 
$$
Denote by  $d_i \colon \delta(Y) \to Y$ the obvious projections covering the face maps $d_i \colon X_{k+1} \to X_k$.  We can define inductively a 
family $\delta^k(Y)$ with  maps $\delta^k(Y) \to \delta^{k-1}(Y)$.  We remark that $\delta^{\bullet + 1}(Y)$
is not a simplicial manifold. However, in the next example we discuss a related construction, which is a simplicial surjective submersion.

\begin{example}\label{ex:mu inverses1}
If $X_\bullet$ is a simplicial manifold then for each $i = 0, \dots, p$ there are maps $\mu_i \colon X_p \to X_0$ induced by the inclusion $[0] \to [p]$ defined by  $0 \mapsto i$. Then 
$$
\mu = (\mu_0, \dots, \mu_k)  \colon X_k \to X_0^{k+1}
$$
defines a morphism of simplicial manifolds $\mu \colon  X_\bullet \to X^{\bullet+1}_0$. If $Y \to X_0$ is a surjective submersion then so is $Y^{k+1} \to X_0^{k+1}$ and we can define a simplicial surjective submersion $\mu^{-1}(Y^{\bullet+1}) \to X_\bullet$. We will use this example extensively in the rest of the paper.
\end{example}

We will use the following similar construction in Section \ref{S:string} when we discuss bundle 2-gerbes.

\begin{example}\label{ex:mu inverses2}
We have maps $\mu_{ij} \colon X_p \to X_1$ for $0 \leq i < j \leq p$ induced by the map $[1] \to [p]$ defined by $0 \mapsto i$ and $1 \mapsto j$. We can assemble these into maps $X_k \to X_1^{k(k+1)/2}$, which we also call $\mu$. If $Y \to X_1$ is a surjective submersion then as above we can pull back $Y^{k(k+1)/2}$ to give a surjective submersion over $X_k$, for $k \geq1$. So we have a collection of manifolds
$$
\mu^{-1}(Y)_k = \begin{cases} X_0, & k = 0\\ Y, & k=1\\ \mu^{-1}(Y^{k(k+1)/2}), & k >1 \end{cases}
$$
and maps $\mu^{-1}(Y)_k \to \mu^{-1}(Y)_{k-1}$ satisfying the simplicial identities for face maps. Note that unlike Example \ref{ex:mu inverses1}, in general $\mu^{-1}(Y)_\bullet$ is only a {\em semi}-simplicial manifold, since we may not have degeneracy maps $\mu^{-1}(Y)_0 = X_0 \to \mu^{-1}(Y)_1 = Y$.  The canonical map $\mu^{-1}(Y)_\bullet \to X_\bullet$ is then a semi-simplicial surjective submersion.  
\end{example}

It is clear that one could continue this and define a semi-simplicial surjective submersion given a surjective submersion $Y \to X_k$ as in the paragraph preceding Example \ref{ex:mu inverses1}, however we will only need these two cases.

\begin{lemma}\label{L:Y to delta Y}
Let $Y_\bullet \to X_\bullet$ be a  simplicial surjective submersion. For any $k$ there is a map of surjective submersions $ Y_{k+1} \to \delta(Y_k)$ covering projections to $X_{k+1}$ such that 
each composition $Y_{k+1} \to \delta(Y_k) \to Y_k$ is the corresponding face map $Y_{k+1} \to Y_k$.
\end{lemma}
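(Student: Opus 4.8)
The plan is to build the map $Y_{k+1} \to \delta(Y_k)$ one factor at a time, using nothing more than the universal properties of the pullbacks and fibre products that define $\delta(Y_k)$.

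First I would unpack what it means for $Y_\bullet \to X_\bullet$ to be a simplicial surjective submersion. Writing $\pi_j\colon Y_j \to X_j$ for the projections and $d_i$ for the face maps of both simplicial manifolds, the statement that the projections assemble into a simplicial map says precisely that for every $i = 0, \dots, k+1$ the square
\[
\begin{array}{ccc}
Y_{k+1} & \xrightarrow{\ d_i\ } & Y_k\\
\pi_{k+1}\downarrow & & \downarrow\pi_k\\
X_{k+1} & \xrightarrow{\ d_i\ } & X_k
\end{array}
\]
commutes; that is, each face map $d_i\colon Y_{k+1}\to Y_k$ covers the face map $d_i\colon X_{k+1}\to X_k$.

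Next, recall that $d_i^{-1}(Y_k)\to X_{k+1}$ is by definition the pullback of $\pi_k\colon Y_k\to X_k$ along $d_i\colon X_{k+1}\to X_k$, which exists as a smooth manifold because $\pi_k$ is a surjective submersion. The commuting square above supplies exactly the data required by the universal property of this pullback: the pair $(\pi_{k+1}, d_i)$ induces a unique smooth map $f_i\colon Y_{k+1}\to d_i^{-1}(Y_k)$ over $X_{k+1}$, characterised by the fact that the composite $Y_{k+1}\xrightarrow{f_i} d_i^{-1}(Y_k)\to Y_k$ equals the face map $d_i\colon Y_{k+1}\to Y_k$. Since each $f_i$ is a map over $X_{k+1}$ and $\delta(Y_k)$ is the fibre product of the $d_i^{-1}(Y_k)$ over $X_{k+1}$, the universal property of that fibre product assembles the $f_i$ into a single smooth map
\[
f = (f_0, \dots, f_{k+1})\colon Y_{k+1} \to \delta(Y_k)
\]
over $X_{k+1}$, i.e.\ a map of surjective submersions covering the projections to $X_{k+1}$.

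Finally I would verify the compatibility with face maps. By construction the $i$-th projection $d_i\colon \delta(Y_k)\to Y_k$ postcomposed with $f$ recovers $f_i$ followed by $d_i^{-1}(Y_k)\to Y_k$, which is exactly the face map $d_i\colon Y_{k+1}\to Y_k$; this is precisely the asserted condition that each composite $Y_{k+1}\to \delta(Y_k)\to Y_k$ is the corresponding face map. I do not expect any genuine obstacle: the whole argument is a formal consequence of the universal properties of pullback and fibre product, all of which are available in the category of smooth manifolds because every map in sight is a surjective submersion. The only care required is the bookkeeping of the $k+2$ face maps indexed $0,\dots,k+1$ and confirming that the iterated fibre product defining $\delta(Y_k)$ is well-defined as a smooth manifold, which again follows from the surjective submersion hypothesis.
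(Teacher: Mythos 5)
Your proposal is correct and is essentially the paper's own argument: the paper simply writes the map explicitly as $y \mapsto (d_0(y), d_1(y), \dots)$, which is exactly the tuple $(f_0,\dots,f_{k+1})$ you obtain from the universal properties of the pullbacks and the fibre product. Your version just makes the universal-property bookkeeping explicit, so there is no substantive difference.
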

\begin{proof}
We define $ Y_{k+1} \to \delta(Y_k)  $ by 
$ y \mapsto  (d_0(y), d_1(y),  \cdots )$ and the result follows.
\end{proof}

For simplicity, throughout this paper we will only work with a restricted notion of morphism between simplicial surjective submersions. Namely, if  $Y'_\bullet \to X_\bullet$ and $Y_\bullet \to X_\bullet$ 
are  simplicial surjective submersions then a morphism 
$$
\phi_\bullet \colon (Y'_\bullet \to X_\bullet) \to  (Y_\bullet \to X_\bullet)
$$
is a family of maps $\phi_\bullet \colon Y'_\bullet \to Y_\bullet$ such that every diagram 
$$ 
\xymatrix{ 
Y_k' \ar[d] \ar[r]^{\phi_k} & Y_k \ar[d] \\ 
X_k \ar@2{-}[r]  & X_k } 
$$ 
commutes for every $k \geq 0$.

 In particular we have 
 
 \begin{lemma}
 \label{lemma:uni-sss}
 If $Y_\bullet \to X_\bullet$ is a simplicial surjective submersion there is 
 a morphism of simplicial surjective submersions 
$$
\mu_\bullet \colon (Y_\bullet \to X_\bullet) \to (\mu^{-1}(Y_0^{\bullet+1}) \to X_\bullet).
$$
\end{lemma}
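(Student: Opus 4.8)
The plan is to build $\mu_\bullet$ out of the vertex maps of $Y_\bullet$, exactly mirroring the construction of $\mu\colon X_\bullet \to X_0^{\bullet+1}$ in Example~\ref{ex:mu inverses1}. For each $k$ and each $i = 0, \dots, k$, let $\mu_i\colon Y_k \to Y_0$ be the map induced by the morphism $[0]\to[k]$ sending $0\mapsto i$, and for $y \in Y_k$ lying over $x \in X_k$ set
$$
\mu_k(y) = \bigl(x,\, \mu_0(y),\, \mu_1(y),\, \dots,\, \mu_k(y)\bigr).
$$
The first thing I would verify is that this tuple genuinely lies in $\mu^{-1}(Y_0^{k+1})$. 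By hypothesis the structure maps assemble into a simplicial map $Y_\bullet \to X_\bullet$, so each vertex map $\mu_i\colon Y_k \to Y_0$ covers the vertex map $\mu_i\colon X_k \to X_0$; hence $\mu_i(y)$ sits over $\mu_i(x)$, which is precisely the fibre-product condition defining $\mu^{-1}(Y_0^{k+1}) = X_k \times_{X_0^{k+1}} Y_0^{k+1}$. Since the $X_k$-component of $\mu_k(y)$ is the image of $y$, the map $\mu_k$ covers the identity on $X_k$ and so satisfies the defining condition for a morphism of simplicial surjective submersions.

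It then remains to check that the family $\mu_\bullet$ is simplicial, i.e.\ commutes with face and degeneracy maps. One could do this by hand: on the cartesian-product side (Example~\ref{ex:cartesian}) the face map $d_j$ omits the $j$-th factor, and composing the $i$-th vertex map of $Y_\bullet$ with $d_j$ again returns a vertex map, with index shifted according to whether $i < j$ or $i \geq j$, so the two sides match factor by factor; degeneracies are similar. However, the cleaner route I would take is to observe that the same vertex-evaluation recipe gives a simplicial map $Y_\bullet \to Y_0^{\bullet+1}$, and that this fits together with the structure map into a commuting square of simplicial manifolds
$$
\xymatrix{
Y_\bullet \ar[r] \ar[d] & Y_0^{\bullet+1} \ar[d] \\
X_\bullet \ar[r]_{\mu} & X_0^{\bullet+1}
}
$$
in which the right-hand arrow is induced degreewise by $Y_0 \to X_0$. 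Because $\mu^{-1}(Y_0^{\bullet+1})$ is by definition the pullback of the bottom and right arrows, and pullbacks of simplicial manifolds are formed degreewise, the universal property produces the desired simplicial map $\mu_\bullet\colon Y_\bullet \to \mu^{-1}(Y_0^{\bullet+1})$ over $X_\bullet$; simpliciality (including compatibility with degeneracies) and covering the identity on $X_\bullet$ then come for free.

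There is no serious obstacle in this lemma; it is essentially a bookkeeping exercise in the simplicial identities. The one point that genuinely uses the hypotheses, and which I would state carefully, is the compatibility noted above: it is exactly the simpliciality of $Y_\bullet \to X_\bullet$ that forces each $\mu_i(y)$ to lie over $\mu_i(x)$, so that $\mu_k$ lands in the pullback. Everything else is formal once the construction is phrased through the universal property.
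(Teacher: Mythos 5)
Your proposal is correct and is essentially the paper's own argument: the paper's proof consists precisely of the commuting square relating the vertex-evaluation maps $\mu$ on $Y_\bullet$ and $X_\bullet$, from which the map into the pullback $\mu^{-1}(Y_0^{\bullet+1})$ follows by the universal property. Your version simply spells out the explicit formula for $\mu_k$ and the verification that simpliciality of $Y_\bullet \to X_\bullet$ forces each $\mu_i(y)$ to lie over $\mu_i(x)$, which the paper leaves implicit.
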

\begin{proof}
For every $k=0, 1, \dots$ we have a commuting diagram
$$ 
\xymatrix{ 
Y_k \ar[d] \ar[r]^-\mu & Y^{k+1}_0 \ar[d] \\ 
X_k \ar[r]^-\mu & X^{k+1}_0 } 
$$ 
and the result follows from this.
\end{proof}

\subsection{Bundle gerbes and simplicial manifolds}\label{SS:ss&bg}

Let  $X_\bullet$ be a simplicial manifold and $Q \to X_p$ be a $U(1)$-bundle for some $p\geq 0$. We define a new 
$U(1)$ bundle $\delta(Q)$ on $X_{p+1}$ by 
$$
\delta(Q) = d_0^{-1}(Q) \otimes d_1^{-1}(Q)^* \otimes d_2^{-1}(Q) \otimes \cdots 
$$
For $i \neq j$ let $\pi_{ij} \colon X_{p+2} \to X_{p}$ denote the map 
induced by the unique order-preserving map $[{p}] \to [{p+2}]$ whose image does not 
contain $i$ and $j$.  Notice that if $i \leq  j$ then we have $d_i d_j = \pi_{i (j+1)}$ 
and if $i > j$ then $d_i d_j  = \pi_{ij}$. It follows easily that there is an isomorphism 
\begin{equation}\label{E:canon triv of delta^2}
\delta^2(Q) =\bigotimes_{ 0 \leq i < j \leq p+2 }  \pi_{ij}^{-1}(Q \otimes Q^*) 
\end{equation}
and hence has $\delta^2(Q)$ has a canonical trivialisation. Explicitly, we may define a section $c$ of $\d^2(Q)$ 
whose value at $x\in X_{p+2}$ is 
\begin{equation*}\label{E:canon section of delta^2}
c(x) = \bigotimes_{ 0 \leq i < j \leq p+2 }  \pi_{ij}^{-1}(q_{ij} \otimes q_{ij}^*), 
\end{equation*}
for some choice of elements $q_{ij}$ in the fibres over $\pi_{ij}(x)$. We will usually denote this canonical section by $1$.

Recall the definition of a simplicial line bundle from \cite{BryMac}. 

\begin{definition}[\cite{BryMac}] 
\label{simplicial:bundle}
Let $X_\bullet$ be a simplicial manifold.  
A {\em simplicial line bundle}  over $X_\bullet$ is a pair  $(Q, \sigma)$ defined as follows:
  \begin{enumerate}
  \item $Q \to X_1$ is a $U(1)$-bundle;
  \item $\sigma$ is a section of $\delta(Q) \to X_2$ such that  $\delta(\sigma) = 1 \in \delta^2(Q)$.
    \end{enumerate}
\end{definition}

If $Y \to M$ is a surjective submersion there is an  equivalence between bundle gerbes $\cG$ over $M$ and simplicial line bundles over  $Y^{[\bullet+1]}$.   To see this note that if $P$ is a simplicial line bundle over $Y^{[\bullet+1]}$ then $P \to Y^{[2]}$ is a $U(1)$-bundle and $\sigma(y_1, y_2, y_3)$ 
is an element of 
$$
P_{(y_2, y_3)} \otimes P^*_{(y_1, y_3)} \otimes P_{(y_1, y_2)}
$$
which must be of the form $b \otimes m(a, b)^* \otimes a$ for some bundle morphism
$$
m \colon P_{(y_1, y_2)} \otimes P_{(y_2, y_3)} \to P_{(y_1, y_3)}
$$
and any elements $a \in P_{(y_1, y_2)}$ and $b \in P_{(y_2, y_3)}$. The morphism $m$ defines an associative
bundle gerbe multiplication if and only if $\delta(\sigma) = 1$. If $Y \to M$ is a surjective submersion then we denote the bundle gerbe given by the simplicial line bundle $P$ over $Y^{[\bullet + 1]}$ by $(P, Y)$.

We say that a simplicial line bundle $(Q, \sigma)$ over $X_\bullet$ is {\em trivial} if there is a $U(1)$-bundle $T \to X_0$
such that $Q = \delta(T)$ and $\sigma$ is the section $1$ of $\delta^2(T)$.  It is easy to see that a bundle
gerbe is trivial if and only if it is trivial when regarded as a simplicial line bundle. In this case we have a line bundle $T \to Y$ such that $\d(T) = P$, where here the $\d$ operation is for the simplicial manifold $Y^{[\bullet + 1]}$. For clarity, we will write $\d_Y(T)$ for this whenever there is any possibility of confusion.

Recall that if  $(P, Y)$ is a bundle gerbe on $M$ with trivialisations $T$ and $R$, then there is a canonical definition 
of a line bundle $L \to M$ with the property that $T = R \otimes \pi^*(L)$. The point is that we have an isomorphism $\delta(T) \to \delta(R)$ and hence descent data for $T \otimes R^*$. We denote the descended line bundle $L$ by $T \oslash R$.

We will also be interested in a particular class of trivial bundle gerbes: we say a bundle gerbe $(P, Y)$ over $M$ is {\em strongly trivial} if $P = Y^{[2]} \times U(1)$ and the multiplication is the {\em trivial} multiplication, in other words $(y_1, y_2, z_1)(y_2, y_3, z_2) = (y_1, y_3, z_1 z_2)$.
  Note that $Y \times U(1)$ is a trivialisation of a strongly trivial bundle gerbe and it 
follows that if $T$ is a trivialisation of a strongly trivial bundle gerbe then $T = T \oslash(Y \times U(1))$ descends to $M$. If $T$
is trivialised so that $T = Y \times U(1)$ we say that $T$ is a {\em strong trivialisation} if the induced trivialisation morphism  
$\delta_Y( Y \times U(1)) \to Y^{[2]} \times U(1)$ is the identity on the $U(1)$ factor.  We have the following Lemma
whose proof is straightforward.

\begin{lemma}
\label{lemma:strong-triv}
Assume that $(Q, Y)$ is strongly trivial over $M$ and trivialised by $T$ so that $T$ descends to $T \oslash (Y \times U(1)) \to M$ as discussed above.  Assume further that $s \colon Y \to T$ is a section so it induces an isomorphism $T \simeq Y \times U(1)$. Then $s$ descends 
to a section of $T \oslash (Y \times U(1)) \to M$ if and only if the isomorphism $T \simeq Y \times U(1)$ induces a strong trivialisation 
of $(Q, Y)$.
\end{lemma}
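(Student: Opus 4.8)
The plan is to reduce both conditions to a single equation for a $U(1)$-valued function on $Y^{[2]}$ and then observe that the two conditions yield the same equation. Write $\phi\colon \delta_Y(T) \to Q$ for the trivialisation isomorphism furnished by $T$, so that at $(y_1, y_2) \in Y^{[2]}$ it is an isomorphism $T_{y_2} \otimes T_{y_1}^* \to Q_{(y_1,y_2)}$ intertwining the canonical multiplication on $\delta_Y(T)$ with the trivial one on $Q = Y^{[2]} \times U(1)$. Given the section $s$, I would define $f \colon Y^{[2]} \to U(1)$ by
$$
\phi\big(s(y_2) \otimes s(y_1)^*\big) = \big(y_1, y_2, f(y_1, y_2)\big),
$$
so that $f$ records the failure of the section $\delta_Y(s)$ to be carried by $\phi$ to the canonical section $1$ of $Q$.

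First I would treat the strong trivialisation condition. The isomorphism $T \simeq Y \times U(1)$ induced by $s$ carries $\delta_Y(Y \times U(1))$ to $\delta_Y(T)$, and composing with $\phi$ produces exactly the trivialisation morphism $\delta_Y(Y \times U(1)) \to Y^{[2]} \times U(1)$ of the definition. A direct fibrewise computation, tracking $(y_2, z_2) \mapsto z_2\,s(y_2)$ and $(y_1, z_1)^* \mapsto z_1^{-1} s(y_1)^*$ through $\phi$, shows this morphism sends $(y_1, y_2, z_2 z_1^{-1})$ to $(y_1, y_2, z_2 z_1^{-1} f(y_1, y_2))$; hence it is the identity on the $U(1)$-factor precisely when $f \equiv 1$. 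This is the first of the two equivalences.

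Next I would analyse the descent condition. Since $R := Y \times U(1)$ is itself a trivialisation, indeed the canonical strong one, with isomorphism $\psi\colon \delta_Y(R) \to Q$ the identity on the $U(1)$-factor, the line bundle $L = T \oslash R$ is obtained by descending $T \otimes R^*$ along $Y \to M$. The descent isomorphism $g\colon d_1^{-1}(T \otimes R^*) \to d_0^{-1}(T \otimes R^*)$ over $Y^{[2]}$ is determined by pulling the canonical trivialising section of $\delta_Y(T \otimes R^*) = \delta_Y(T) \otimes \delta_Y(R)^*$ back through $\phi \otimes (\psi^*)^{-1}$ from the diagonal section of $Q \otimes Q^*$. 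Using the canonical section $1$ of $R$ and the section $s$ of $T$, I would compute this trivialising section to be $f^{-1}\,\big(s(y_2) \otimes 1_{y_2}^*\big) \otimes \big(s(y_1) \otimes 1_{y_1}^*\big)^*$, so that $g$ sends $s(y_1) \otimes 1_{y_1}^*$ to $f(y_1, y_2)^{-1}\big(s(y_2) \otimes 1_{y_2}^*\big)$. The section $s$ descends iff the section $s \otimes 1^*$ of $T \otimes R^*$ is invariant under $g$, which again holds iff $f \equiv 1$.

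Combining the two equivalences finishes the proof. The routine part is the bookkeeping with tensor products and duals of $U(1)$-bundles; the one place to be careful is the explicit identification of the descent isomorphism with multiplication by $f^{-1}$, since this is where the two a priori different structures, the strong trivialisation morphism and the descent data for $T \oslash (Y \times U(1))$, are matched up through the same function $f$. Everything else is a direct fibrewise verification.
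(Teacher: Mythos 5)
Your argument is correct: reducing both conditions to the vanishing of the single $U(1)$-valued function $f$ on $Y^{[2]}$ measuring the discrepancy between $\phi(\delta_Y(s))$ and the canonical section is exactly the verification intended here, and both fibrewise computations check out (any ambiguity in the sign/inverse conventions is harmless since $f\equiv 1$ iff $f^{-1}\equiv 1$). The paper omits the proof entirely, declaring it straightforward, so your write-up simply supplies the routine details it leaves to the reader.
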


Let $(P, Y)$ be a bundle gerbe over $M$. If $f \colon N \to M$ is a map then 
the bundle gerbe pulls back to a bundle gerbe $(f^{-1}(P), f^{-1}(Y))$ over $N$. If $X \to M $ is also a
submersion and $f \colon X \to Y$ is a map of manifolds over $M$ then $(f^{-1}(P), X)$ is a bundle gerbe over $M$. 
Notice that both of these examples can be understood as the pullback of the simplicial line bundle over $Y^{[\bullet+1]}$ by $f \colon f^{-1}(Y^{[\bullet+1]}) \to Y^{[\bullet+1]}$ and $f \colon X^{[\bullet+1]} \to Y^{[\bullet+1]}$, respectively.

If  $T \to Y$ is a trivialisation of $(P, Y)$ then we denote the induced trivialisation of $(f^{-1}(P), f^{-1}(Y))$ by $f^{-1}(T)$.

Assume that $P$ has a bundle gerbe connection $\nabla$ 
and curving $f$.  Choose  a connection $\nabla_T $ for $T$ satisfying $\nabla = \delta_Y(\nabla_T)$. Then $F_{\nabla} = \delta_Y(F_{\nabla_T}) = 
\delta_Y(f)$ so that $F_{\nabla_T} - f = \pi^*\nu_T$ for some $\nu_T \in \Omega^2(M)$. If $R$ is another trivialisation with connection $\nabla_R$ such that $\nabla = \d_Y(\nabla_R) $ then, as above, $T \otimes R^*$ descends to a bundle $T \oslash R$. Then $\nabla_T - \nabla_R$ descends to a connection $\nabla_{T\oslash R}$ on $ T \oslash R $ whose curvature $F_{T\oslash R} = \nu_T - \nu_R$.

If $X_\bullet$ is a simplicial manifold and $(P, Y)$ a bundle gerbe over $X_k$,  where $Y \to X_k$ 
is a surjective submersion, we can define  a bundle gerbe $(\delta(P), \delta(Y))$
over $X_{k+1}$.   We will also be interested in a more complicated case.  Let $Y_\bullet \to X_\bullet$ 
be a simplicial surjective submersion and let $(P, Y_k)$ be a bundle gerbe over $X_k$.  Then by  Lemma \ref{L:Y to delta Y} we can restrict $(P, \delta(Y_k))$ to form $(P, Y_{k+1})$ over $X_{k+1}$. Further, we can repeat this process and form $(\d^2(P), Y_{k+2})$ over $X_{k+2}$. Notice that, as per (\ref{E:canon triv of delta^2}), we have that
$$
\d^2(P) = \bigotimes_{ 0 \leq i < j \leq p+4 }  \pi_{ij}^{-1}(P \otimes P^*),
$$
where we have used $\pi_{ij}$ to denote the induced map $Y_{k+2}^{[2]} \to Y_k^{[2]}$. Therefore we see that $(\d^2(P), Y_{k+2})$ is canonically isomorphic to the strongly trivial bundle gerbe $(Y_{k+2}^{[2]} \times U(1), Y_{k+2})$.

We have 

\begin{lemma}
\label{lemma:delta-squared-strong-triv}
  Assume that $(Q, Y_k)$ is a bundle gerbe over $X_k$ and $R \to Y_k$ is a trivialisation 
of $Q$. Then $(\delta^2(Q), Y_{k+2})$ is strongly trivialised by $\delta^2(R) \to Y_{k+2}$.
\end{lemma}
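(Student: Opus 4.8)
The plan is to exploit the fact that two distinct coboundary operations are in play here and that they commute. On one side there is the operation $\d$ appearing in the statement, which is built from the face maps $d_i\colon Y_{k+1}\to Y_k$ of the simplicial manifold $Y_\bullet$ (via Lemma~\ref{L:Y to delta Y}, the restriction of $\d$ over $\d(Y_k)$ to $Y_{k+1}$ is realised by exactly these face maps) and sends a $U(1)$-bundle over $Y_k$ to one over $Y_{k+1}$. On the other side there is the fibrewise coboundary $\d_{Y_k}$ of Section~\ref{SS:ss&bg} attached to the surjective submersion $Y_k\to X_k$, sending $R\to Y_k$ to $\d_{Y_k}(R)\to Y_k^{[2]}$; by hypothesis $R$ trivialises $Q$, that is $\d_{Y_k}(R)=Q$. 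These two operations run in orthogonal simplicial directions: since $Y_\bullet\to X_\bullet$ is a simplicial map, each $d_i$ induces $d_i\colon Y_{k+1}^{[2]}\to Y_k^{[2]}$, and a direct comparison of fibres (the alternating duals of the two $\d$'s simply reorganise the same tensor factors) shows, under the natural identifications, that $\d\circ\d_{Y_k}=\d_{Y_{k+1}}\circ\d$.

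First I would record this commutation and iterate it once. Applying it twice to $R$ gives $\d_{Y_{k+2}}(\d^2(R))=\d^2(\d_{Y_k}(R))=\d^2(Q)$, which already exhibits $\d^2(R)\to Y_{k+2}$ as a trivialisation of the bundle gerbe $(\d^2(Q),Y_{k+2})$: the bundle gerbe product carried by $\d_{Y_{k+2}}(\d^2(R))$ is the trivial one inherited from a coboundary, which is precisely the canonical strongly trivial product on $\d^2(Q)$ identified in the paragraph preceding the statement. So the only remaining point is to promote ``trivialisation'' to ``strong trivialisation''.

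For the strength of the trivialisation I would compare canonical sections. By the identity~\eqref{E:canon triv of delta^2} applied in the $Y_\bullet$-direction, $\d^2(R)=\bigotimes_{i<j}\pi_{ij}^{-1}(R\otimes R^*)$ is canonically trivial, with canonical section $c_R$ assembled factorwise from $r\otimes r^*$; this is exactly the trivialisation $\d^2(R)\cong Y_{k+2}\times U(1)$. Likewise $\d^2(Q)\cong Y_{k+2}^{[2]}\times U(1)$ via its canonical section $c_Q$. It then suffices to check that under the identification $\d_{Y_{k+2}}(\d^2(R))=\d^2(Q)$ the induced section $\d_{Y_{k+2}}(c_R)$ corresponds to $c_Q$, for this is precisely the assertion that the trivialisation morphism $\d_{Y_{k+2}}(Y_{k+2}\times U(1))\to Y_{k+2}^{[2]}\times U(1)$ is the identity on the $U(1)$ factor. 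This is a short computation: over a point $(w,w')$ of $Y_{k+2}^{[2]}$ the fibre of $\d_{Y_{k+2}}(\d^2(R))$ is a tensor product, indexed by pairs $i<j$, of the factors $(R\otimes R^*)$ from the $w'$- and $w$-copies; regrouping each such pair into $R_{\pi_{ij}(w')}\otimes R^*_{\pi_{ij}(w)}$ and its dual realises $Q\otimes Q^*$ on each $\pi_{ij}$ using $\d_{Y_k}(R)=Q$, and under this same regrouping $c_R(w')\otimes c_R(w)^*$ becomes $\bigotimes_{i<j}(q_{ij}\otimes q_{ij}^*)=c_Q(w,w')$.

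The main obstacle, such as it is, is purely bookkeeping: keeping the alternating duals of the two $\d$-operations straight, and verifying that the regrouping which converts the $R\otimes R^*$ pairs into $Q\otimes Q^*$ pairs carries the canonical section $c_R$ to $c_Q$ on the nose rather than to a $U(1)$-multiple of it. Once the commutation $\d\circ\d_{Y_k}=\d_{Y_{k+1}}\circ\d$ and the factorwise naturality of the canonical ``$1$'' section are in hand, both the trivialisation and its strength follow at once.
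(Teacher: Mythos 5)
Your proposal is correct and follows essentially the same route as the paper: both rest on the decomposition \eqref{E:canon triv of delta^2} of $\delta^2(Q)$ and $\delta^2(R)$ into $\bigotimes_{i<j}\pi_{ij}^{-1}(\,\cdot\,\otimes\,\cdot\,^*)$, after which the induced trivialisation morphism is the tensor product of $\pi_{ij}^{-1}(\psi)\otimes\pi_{ij}^{-1}(\psi^*)$ and hence the identity on the $U(1)$ factor. Your explicit statement of the commutation $\d\circ\d_{Y_k}=\d_{Y_{k+1}}\circ\d$ and the factorwise matching of canonical sections is just a more spelled-out version of the same bookkeeping.
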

\begin{proof}
We have that
$$
\delta^2(Q) = \bigotimes_{0 \leq i, j, \leq k+4} \pi_{ij}^{-1}(  Q ) \otimes \pi_{ij}^{-1}(Q^*) \to Y_{k+2}^{[2]}
$$
and 
$$
\delta^2(R) = \bigotimes_{0 \leq i, j, \leq k+4} \pi_{ij}^{-1}(  R) \otimes \pi_{ij}^{-1}( R^* ) \to Y_{k+2}.
$$
If $ \psi \colon \delta_{Y_k}(R) \to Q$ is the trivialisation morphism, the induced trivialisation morphism 
$\delta_{Y_{k+2}}( \delta^2(T)) \to \delta^2(Q)$ is 
$$
\bigotimes_{0 \leq i, j, \leq k+4} \pi_{ij}^{-1}(  \psi) \otimes \pi_{ij}^{-1}( \psi^* ) \to Y_{k+2},
$$
which is the trivial morphism induced by the identity map on $U(1)$. 
\end{proof}

We will be particularly interested in the following examples of this: 

\begin{example}\label{ex:delta G with the mu inverse spaces}
If $Y \to X_0$ is a surjective submersion and $(P, Y)$ is a bundle gerbe, then we have the simplicial surjective submersion $\mu^{-1}(Y^{\bullet +1 }) \to X_\bullet$ from Example \ref{ex:mu inverses1} and we can form the bundle gerbe $(\delta(P), \mu^{-1}(Y^{2}))$ over $X_{1}$ and the bundle gerbe $(\delta^2(P), \mu^{-1}(Y^3))$ over $X_2$. 
\end{example}
\begin{example}\label{ex:delta G for a bundle 2-gerbe}
If $Y \to X_1$ is a surjective submersion and $(P, Y)$ is a bundle gerbe, then we have the semi-simplicial surjective submersion $\mu^{-1}(Y)_\bullet \to X_\bullet$ from Example \ref{ex:mu inverses2} and we can form the bundle gerbe $(\delta(P), \mu^{-1}(Y)_2)$ over $X_{2}$ and the bundle gerbe $(\delta^2(P), \mu^{-1}(Y)_3)$ over $X_3$. 
\end{example}

Finally, we make a remark about notation. We will be concerned with bundle gerbes $\cG = (P, Y)$ over simplicial manifolds and we shall be using the operation $\d$ repeatedly. As in Examples \ref{ex:delta G with the mu inverse spaces} and \ref{ex:delta G for a bundle 2-gerbe} and the discussion preceding Lemma \ref{lemma:delta-squared-strong-triv}, we will often be interested in the bundle gerbe $(\d(P), \d(Y))$ restricted to some subspace of $\d(Y)$. To make it clear precisely which bundle gerbe we mean by $\d(\cG)$, we will use the notation $(\d(P), Y_k)$ (where $Y_\bullet \to X_\bullet$ is a simplicial surjective submersion) whenever there is chance of confusion.

\subsection{Simplicial de Rham cohomology}
We recall the definition of the simplicial de Rham cohomology of a simplicial manifold $X_\bullet$ \cite{Dup}. 
Associated canonically to $X_\bullet$ is the bicomplex with differentials 
\begin{align*}
D_{p, q} \colon \Omega^p(X_q) &\to \Omega^{p+1}(X_q) \oplus \Omega^p(X_{q+1}) \\
\eta_{(p, q)} &\mapsto ((-1)^q d \eta_{(p, q)} , \d  \eta_{(p, q)} ).
\end{align*}
We combine these to form the total complex in the usual fashion: 
$$
D \colon \bigoplus_{p+q = r} \Omega^p(X_q) \to \bigoplus_{p+q = r+1} \Omega^p(X_q).
$$
The cohomology of this total complex is defined to be the simplicial de Rham cohomology, denoted $H^{r}(X_\bullet, \RR)$.
 Note that this is also the real cohomology of the fat realisation 
$\norm{X_{\bullet}}$ (see for instance Proposition 5.15 of \cite{Dup}).  
For later convenience we introduce the notation $\cA^*(X_\bullet)$ for this total complex. 

 Of particular interest
will be $H^3(X_\bullet, \RR)$  and we note that a class consists of 
$$
\eta = ( \eta_{(0, 3)}, \eta_{(1, 2)},  \eta_{(2, 1)}, \eta_{(3, 0)}  ) \in \Omega^0(X_3) \oplus \Omega^1(X_2) \oplus \Omega^2(X_1) \oplus \Omega^3(X_0) 
$$
satisfying
\begin{align*}
D \eta &= 
(\d \eta_{(0, 3)}, - d \eta_{(0, 3)} + \d \eta_{(1, 2)},  d \eta_{(1, 2)} + \d \eta_{(2, 1)}, 
 - d \eta_{(2, 1)} + \d \eta_{(3, 0)},  d \eta_{(3, 0)}) \\
 &= (0, 0, 0, 0, 0)
 \end{align*}
  up to addition of a cocycle of the form $D \rho$ so that 
\begin{multline*}
\eta + D \rho =    (\eta_{(0, 3)} + \delta \rho_{(0, 2)}, \eta_{(1, 2)}+ d \rho_{(0, 2)} + \d \rho_{(1, 1)}, 
\\ \qquad \eta_{(2, 1)} -d \rho_{(1, 1)}+ \d \rho_{(2, 0)}, \eta_{(3, 0)}+ d \rho_{(2, 0)}).
\end{multline*}

\section{Simplicial extensions}\label{S:simplicial extensions}

\subsection{Simplicial extensions of bundle gerbes}\label{SS:simplicial extensions}

Before we define the notion of a simplicial extension we need the following:

\begin{proposition}\label{prop:trivialisation descent}
Let $Y_\bullet \to X_\bullet$ be a simplicial surjective submersion and $\cG = (P, Y_k)$ be a bundle gerbe over $X_k$. Assume that $\delta(\cG) = (\delta(P), Y_{k+1})$ has a trivialisation  $T \to Y_{k+1}$. Then $\delta(T) \to Y_{k+2}$ descends to a line bundle $A_T \to X_{k+2}$ and the canonical trivialisation of $\d^2 (T) \to Y_{k+3}$ descends to a trivialisation of $\delta(A_T) \to X_{k+3}$. 
\end{proposition}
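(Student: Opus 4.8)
The plan is to reduce both statements to the canonical triviality of $\delta^2$ of a $U(1)$-bundle, recorded in \eqref{E:canon triv of delta^2}, by exploiting the fact that the two coboundary operations in play commute. First I would isolate the bookkeeping fact that the simplicial coboundary $\delta$, built from the face maps of $Y_\bullet$, and the fibre-product coboundary $\delta_{Y_k}$, built from the projections $Y_k^{[p]}\to Y_k$, commute up to a canonical isomorphism; this holds because each face map commutes with the fibre-product projections, so that $Y_\bullet^{[p]}$ is again a simplicial manifold on which $\delta$ acts compatibly. Throughout I would use effective descent for line bundles: a line bundle on $Y_{k+2}$ descends to $X_{k+2}$ once it is equipped with a nowhere-zero section $s$ of its coboundary $\delta_{Y_{k+2}}$ over $Y_{k+2}^{[2]}$ satisfying the cocycle condition $\delta_{Y_{k+2}}(s)=1$ on $Y_{k+2}^{[3]}$.

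For the descent of $\delta(T)$, I would produce this descent datum directly. Using the commutation of the two coboundaries together with the trivialisation isomorphism $\delta_{Y_{k+1}}(T)\cong\delta(P)$ coming from $T$, one has
\[
\delta_{Y_{k+2}}(\delta(T)) \cong \delta\bigl(\delta_{Y_{k+1}}(T)\bigr) \cong \delta(\delta(P)) = \delta^2(P),
\]
and the right-hand side carries the canonical section $1$ of \eqref{E:canon triv of delta^2}; I would take this section as $s$. The cocycle condition $\delta_{Y_{k+2}}(s)=1$ then follows because the canonical section of a $\delta^2$-bundle is assembled fibrewise from terms $q\otimes q^*$ and so is preserved under pullback and tensor product, hence under a further application of $\delta_{Y_{k+2}}$. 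Effective descent then yields $A_T\to X_{k+2}$ with $\pi^{-1}(A_T)\cong\delta(T)$.

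For the second statement I would apply the simplicial coboundary to the isomorphism $\pi^{-1}(A_T)\cong\delta(T)$. Because $Y_\bullet\to X_\bullet$ is a simplicial map, $\delta$ commutes with the pullback $\pi^{-1}$, giving
\[
\pi^{-1}\bigl(\delta(A_T)\bigr) \cong \delta(\delta(T)) = \delta^2(T) \quad\text{over } Y_{k+3}.
\]
The canonical section of $\delta^2(T)$, again from \eqref{E:canon triv of delta^2}, transports across this isomorphism to a nowhere-zero section of $\pi^{-1}(\delta(A_T))$. A section of a pulled-back bundle descends exactly when its two pullbacks to $Y_{k+3}^{[2]}$ agree, i.e.\ when $\delta_{Y_{k+3}}$ of it equals $1$, and this holds by the same naturality of the canonical section as before. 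The descended nowhere-zero section is, by construction, the image of the canonical trivialisation of $\delta^2(T)$, and is the desired trivialisation of $\delta(A_T)\to X_{k+3}$.

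The conceptual skeleton is short, so the hard part will be the two bookkeeping facts on which everything rests: that $\delta$ and $\delta_{Y}$ commute through a canonical isomorphism compatible with the canonical sections, and that the canonical section of $\delta^2$ is coherent, so that its further coboundary in the fibre-product direction is again canonical. This single coherence statement delivers both the cocycle condition in the first part and the descent condition in the second, and keeping straight which coboundary acts in which direction, along with the index shifts $k+1,k+2,k+3$, is where errors are most likely to creep in. I would also note the close parallel with Lemma \ref{lemma:delta-squared-strong-triv}: there $\delta^2$ is strongly trivialised outright, whereas here its canonical triviality is instead used to generate descent data.
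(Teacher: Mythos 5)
Your argument is correct and is essentially the paper's proof with the packaging removed: the paper obtains the descent of $\delta(T)$ from the observation that it trivialises the canonically (strongly) trivial bundle gerbe $(\delta^2(P),Y_{k+2})$, and obtains the descent of the canonical section via Lemmas \ref{lemma:delta-squared-strong-triv} and \ref{lemma:strong-triv}, which are exactly the two coherence facts you isolate in your final paragraph. No genuinely different route is taken.
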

\begin{proof}
Notice first that the line bundle $\delta(T)$ trivialises the bundle gerbe $(\delta^2(P), Y_{k+2})$, which is canonically isomorphic to the strongly trivial bundle gerbe $(Y_{k+2}^{[2]} \times U(1), Y_{k+2})$, as we observed in the discussion preceding Lemma \ref{lemma:delta-squared-strong-triv}. Hence $\delta(T)$ descends to $A_T := \delta(T) \oslash (Y_{k+2} \times U(1)) \to X_{k+2}$.

To see that the canonical trivialisation of $\d^2(T) \to Y_{k+3}$ descends to $\d(A_T) \to X_{k+3}$ we apply Lemma \ref{lemma:delta-squared-strong-triv} to the bundle gerbe $(\delta(P), Y_{k+1})$ with trivialisation $T \to Y_{k+1}$ to deduce that $(\delta^3(P), Y_{k+3})$ is strongly trivialised by $\delta^2(T) \to Y_{k+3}$. Then Lemma 
\ref{lemma:strong-triv} implies that the section of $\delta^2(T)$ descends to a section of $\delta(A_T) \to X_{k+3}$.  
\end{proof}

Using this we can make the following definition:

  \begin{definition}     Let $X_\bullet$ be a simplicial manifold
  and $\cG = (P, Y_0)$ be a bundle gerbe over $X_0$. A
{\em simplicial extension} of  $\cG$ over $X_\bullet$ is a triple $(Y_\bullet, T, s)$ consisting of:
  \begin{enumerate}
  \item $Y_\bullet \to X_\bullet$  a simplicial surjective submersion;
  \item a trivialisation $T \to Y_1 $ of $\delta(\cG) = (\delta(P), Y_1)$ over $X_1$; and
  \item a section $s \colon X_2 \to A_T$ satisfying $\delta(s) =1$ relative to the canonical trivialisation of $\delta(A_T)$.     \end{enumerate}
\end{definition}

If $(Y_\bullet, T, s)$ is a simplicial extension then pulling back $s$ to a section of $\delta(T)$ defines a simplicial line bundle over $Y_\bullet$, which we denote by $[T, s]$. 

A simple but useful example of a simplicial extension is the following, defined for any bundle gerbe $(P, Y)$ over $X$.

\begin{proposition}
\label{prop:Psimplicial}
A bundle gerbe $(P, Y)$ over a manifold $X$ defines a simplicial extension $(Y^{[\bullet + 1]}, T, s)$ over the constant simplicial manifold $X^{(\bullet)}$ whose induced 
simplicial line bundle on $Y^{[\bullet+1]}$ is precisely $(P, Y)$. 
\end{proposition}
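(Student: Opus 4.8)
The plan is to take $Y_\bullet = Y^{[\bullet+1]}$ from Example \ref{ex:fibre prod}, equipped with the canonical projection $Y^{[k+1]} \to X = X^{(\bullet)}_k$, and to show that the remaining data $(T,s)$ of a simplicial extension is exactly the data packaging $(P,Y)$ as a simplicial line bundle. The guiding principle is the equivalence recalled in Section \ref{SS:ss&bg}: a bundle gerbe $(P,Y)$ over $X$ is the same thing as a simplicial line bundle $(P,\sigma)$ over $Y^{[\bullet+1]}$, where $P \to Y^{[2]}$ is the bundle gerbe line bundle and $\sigma$ is the section of $\delta_Y(P) \to Y^{[3]}$ encoding the multiplication $m$. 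Since every face and degeneracy map of $X^{(\bullet)}$ is the identity, I expect the trivialisation-and-section data $(T,s)$ of a simplicial extension over $X^{(\bullet)}$ to reduce precisely to the line-bundle-and-section data $(P,\sigma)$ of Definition \ref{simplicial:bundle}, so that constructing the extension becomes a matter of matching up the two $\delta$-operations in play (the one for the constant simplicial manifold $X^{(\bullet)}$ and the one internal to $Y^{[\bullet+1]}$).

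First I would verify clause (1): since a composite of surjective submersions is a surjective submersion and the face maps of $Y^{[\bullet+1]}$ cover the identities of $X^{(\bullet)}$, the projection $Y^{[\bullet+1]} \to X^{(\bullet)}$ is a simplicial surjective submersion. Next, for clause (2), I would take $T = P$ as a line bundle over $Y_1 = Y^{[2]}$. Because all the face maps $X_1 \to X_0$ are identities, a direct computation identifies the fibre of $\delta(\cG) = (\delta(P), Y_1)$ over a point of $Y_1^{[2]}$ with $P_{(a_1,b_1)} \otimes P_{(a_0,b_0)}^*$, while $\delta_{Y_1}(P)$ has fibre $P_{(b_0,b_1)} \otimes P_{(a_0,a_1)}^*$; repeated use of the bundle gerbe multiplication $m$, together with the canonical isomorphism $P_{(x,y)}^* \cong P_{(y,x)}$, then produces the required trivialisation isomorphism $\delta_{Y_1}(T) \to \delta(P)$, and associativity of $m$ shows it respects the bundle gerbe products. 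This exhibits $T = P$ as a trivialisation of $\delta(\cG)$.

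For clause (3), I would invoke Proposition \ref{prop:trivialisation descent} to form $A_T = \delta(T) \oslash (Y_2 \times U(1)) \to X_2$. The point is that $\delta(T) = \delta(P)$ over $Y_2 = Y^{[3]}$ is canonically the bundle $\delta_Y(P)$ carrying the multiplication section $\sigma$, and by Lemma \ref{lemma:strong-triv} the section $\sigma$ descends to a section $s \colon X_2 \to A_T$ precisely because it induces a strong trivialisation of the strongly trivial gerbe $(\delta^2(P), Y_2)$. The condition $\delta(s) = 1$ relative to the canonical trivialisation of $\delta(A_T)$ then corresponds, under the descent recorded in Proposition \ref{prop:trivialisation descent}, to the simplicial line bundle condition $\delta_Y(\sigma) = 1$ of Definition \ref{simplicial:bundle}, i.e.\ to associativity of $m$, which holds. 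Finally, pulling $s$ back to $\delta(T)$ recovers $\sigma$ by construction, so the induced simplicial line bundle is $[T,s] = (P,\sigma) = (P,Y)$, as required.

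The main obstacle I anticipate is purely bookkeeping: carefully reconciling the coboundary $\delta$ for the constant simplicial manifold $X^{(\bullet)}$ (whose face maps are identities, so that $\delta(Y_0) = Y^{[2]}$ via Lemma \ref{L:Y to delta Y}, up to the expected reordering of factors) with the coboundary $\delta_Y$ internal to $Y^{[\bullet+1]}$ that defines the simplicial line bundle. All of the conceptual content reduces to the unit and associativity axioms of the bundle gerbe multiplication $m$; the risk lies in keeping the index conventions and the various flips straight when writing the trivialisation isomorphism and checking the descent of $\sigma$.
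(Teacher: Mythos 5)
Your proposal is correct and follows essentially the same route as the paper: you take $T = P$, build the trivialisation isomorphism and the section of $\delta(T)$ from the bundle gerbe multiplication, and derive $\delta(s) = 1$ from associativity. The only cosmetic difference is that you make the appeal to Lemma \ref{lemma:strong-triv} explicit where the paper simply observes that the descent equation preserves the multiplication section.
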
 

\begin{proof}
The condition that $T$ trivialises $\delta(P)$ can be written as follows.  Let $(y_0,y_1) $, $(y_0',y_1')$, $(y_0'', y_1'') \in Y^{[2]}$, then there is an isomorphism 
\begin{equation}
\label{eq:T-triv-condition} 
T_{(y_0, y_1)} \simeq P_{(y_0, y_0')} \otimes  T_{(y_0', y_1')} \otimes  P_{(y_1', y_1)}
\end{equation}
and the isomorphism
$$
T_{(y_0, y_1)} \simeq P_{(y_0, y_0'')}  \otimes T_{(y_0'', y_1'')} \otimes  P_{(y_1'', y_1)}
$$
is equal to the induced isomorphism
\begin{align*}
T_{(y_0, y_1)} &\simeq P_{(y_0, y_0')} \otimes  T_{(y_0', y_1')} \otimes  P_{(y_1', y_1)} \\
              & \simeq P_{(y_0, y_0')} \otimes  P_{(y_0', y_0'')}  \otimes T_{(y_0'', y_1'')} \otimes  P_{(y_1'', y_1')}  \otimes P_{(y_1', y_1)} \\
  & \simeq P_{(y_0, y_0'')} \otimes T_{(y_0'', y_1'')}  \otimes P_{(y_1'', y_1)}            ,
\end{align*}
where we use the bundle gerbe multiplication to get from the second to the third line.

In this case, a trivialisation $T \to Y^{[2]}$ is given by $T_{(y_0, y_1)} = P_{(y_0, y_1)}$ and defining the map using the bundle gerbe product.  Then 
$$ 
\delta(T)_{(y_0, y_1, y_2)}  = P_{(y_1, y_2)} \otimes P_{(y_0, y_2)}^* \otimes P_{(y_0, y_1)} ,
$$
and a section is given by the bundle gerbe multiplication. It is easy to see that the descent equation preserves this section and 
this defines the required section $s$ of $A_T$.  The associativity of the bundle gerbe product gives us $\delta(s) = 1$. By construction this simplicial extension pulls back to the simplicial line bundle defined by $(P, Y)$.
\end{proof}

\begin{proposition}
\label{prop:simp-ext-pullback}
Let  $(Y'_\bullet \to X_\bullet)$  and $ (Y_\bullet \to X_\bullet)$ be simplicial surjective submersions where 
$(Y'_0 \to X_0) = (Y_0 \to X_0)$.  Assume we have a morphism $\phi \colon (Y'_\bullet \to X_\bullet) \to (Y_\bullet \to X_\bullet)$
which is the identity when $k = 0$. Then a simplicial extension $(Y_\bullet, T, s)$ of $(P, Y_0)$ over $X_\bullet$ pulls
back to a simplicial extension $(Y'_\bullet, \phi^{-1}(T), s)$ of $(P, Y_0)$ over $X_\bullet$. 
\end{proposition}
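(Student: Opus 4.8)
The plan is to build the pullback extension by taking $\phi^{-1}(T) := \phi_1^{-1}(T)$ as the trivialisation over $Y'_1$ and keeping the very same section $s$ over $X_2$, and then to verify the three conditions in the definition of a simplicial extension. The whole argument rests on the hypothesis that $\phi_\bullet$ is a simplicial map covering the identity on $X_\bullet$ with $\phi_0 = \id$, so that both the operation $\delta$ and the descent construction $A_{(-)}$ are natural with respect to $\phi$. Note also that since $Y'_0 = Y_0$, the bundle gerbe $(P, Y'_0)$ being extended is literally $\cG$.

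First I would check that $\phi_1^{-1}(T)$ trivialises $(\delta(P), Y'_1)$. Since $\phi$ is simplicial and $\phi_0 = \id$, the face maps satisfy $d_i \circ \phi_1 = \phi_0 \circ d_i = d_i \colon Y'_1 \to Y_0$, so the canonical map $Y'_1 \to \delta(Y_0)$ of Lemma~\ref{L:Y to delta Y} factors as $Y'_1 \xrightarrow{\phi_1} Y_1 \to \delta(Y_0)$. Hence the bundle gerbe $(\delta(P), Y'_1)$ is exactly the pullback of $(\delta(P), Y_1)$ along $\phi_1$, and $\phi_1^{-1}(T)$ is the induced trivialisation, as in the discussion of pullbacks of trivialisations in Section~\ref{sec:ss&bg}.

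Next I would establish the key naturality statement: because the face maps commute with $\phi$, we have $\delta(\phi_1^{-1}(T)) = \phi_2^{-1}(\delta(T))$ as line bundles over $Y'_2$, and similarly $\delta^2(\phi_1^{-1}(T)) = \phi_3^{-1}(\delta^2(T))$ over $Y'_3$. Now $A_T \to X_2$ is, by Proposition~\ref{prop:trivialisation descent}, the descent of $\delta(T) \to Y_2$, obtained as $\delta(T) \oslash (Y_2 \times U(1))$ after identifying $(\delta^2(P), Y_2)$ with the strongly trivial gerbe via Lemma~\ref{lemma:delta-squared-strong-triv}. Since $\phi_2 \colon Y'_2 \to Y_2$ covers $\id_{X_2}$, pulling this entire picture back along $\phi_2$ shows that $\delta(\phi_1^{-1}(T)) = \phi_2^{-1}(\delta(T))$ descends to the same line bundle over $X_2$; that is, there is a canonical identification $A_{\phi^{-1}(T)} = A_T$. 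Under this identification I would take the section of $A_{\phi^{-1}(T)}$ to be the original $s$.

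Finally, applying the same naturality at level $3$ — where $\phi_3$ covers $\id_{X_3}$ — identifies $\delta(A_{\phi^{-1}(T)})$ with $\delta(A_T)$ over $X_3$ compatibly with their canonical trivialisations, which themselves descend from the canonical trivialisations of $\delta^2(T)$ via Lemma~\ref{lemma:strong-triv}. The condition $\delta(s) = 1$ therefore transfers verbatim from the original extension, completing the verification. I expect the main obstacle to be the third step: carefully checking that the descent operation $A_{(-)}$, together with the identifications of the relevant $\delta^2$-gerbes with strongly trivial gerbes and the $\oslash$ construction, is genuinely natural under $\phi_2$-pullback, so that $A_{\phi^{-1}(T)}$ and $A_T$ \emph{agree} rather than merely become isomorphic after an uncontrolled choice.
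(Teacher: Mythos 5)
Your proposal is correct and follows essentially the same route as the paper: pull back $T$ along $\phi_1$, use the naturality $\delta(\phi^{-1}(T)) = \phi^{-1}(\delta(T))$ coming from $\phi$ being a simplicial map over the identity on $X_\bullet$, and conclude that the descended bundle is again $A_T$ so the same section $s$ works. The paper's proof is just a terser version of your argument; your extra care about the naturality of the descent construction $A_{(-)}$ is exactly the point the paper leaves implicit.
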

\begin{proof}
The trivialisation $T$ of $(\d(P), Y_1)$ pulls back to a trivialisation $\phi^{-1}(T)$ of the bundle gerbe $\phi^{-1}(\d(P), Y_1) = (\d(P), Y_1')$. Further, since $\phi$ is a simplicial map, $\d(\phi^{-1}(T)) = \phi^{-1}(\d(T))$ which descends to $A_T$. Hence the triple $(Y_\bullet', \phi^{-1}(T), s)$ is a simplicial extension of $(P, Y_0)$.
\end{proof}

More surprising is the following:

\begin{proposition}\label{prop:there exists a mu extension}
Let $(Y_\bullet \to X_\bullet)$ be a simplicial surjective submersion and $(Y_\bullet, T, s)$ be a simplicial 
extension of $(P, Y_0)$ over $X_\bullet$. Then there is a simplicial extension 
$$
(\mu^{-1}(Y_0^{\bullet+1}), \mu(T) , \mu(s))
$$
which pulls back to $(Y_\bullet, T, s)$ by the morphism in Proposition \ref{prop:simp-ext-pullback}.  
\end{proposition}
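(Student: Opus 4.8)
The plan is to produce the extension level by level, using the fact that the underlying simplicial surjective submersion $\mu^{-1}(Y_0^{\bullet+1}) \to X_\bullet$ is already supplied by Example \ref{ex:mu inverses1}, and that Example \ref{ex:delta G with the mu inverse spaces} gives the bundle gerbe $(\delta(P), \mu^{-1}(Y_0^2))$ over $X_1$ which we must trivialise. Since a simplicial extension is a triple, the only genuine work is to build a trivialisation $\mu(T) \to \mu^{-1}(Y_0^2)$ and a section $\mu(s) \colon X_2 \to A_{\mu(T)}$, and then to identify the pullback along the morphism $\mu_\bullet$ of Lemma \ref{lemma:uni-sss} with $(Y_\bullet, T, s)$. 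The conceptual subtlety — and the reason this is not a routine descent — is that the comparison map $\mu_1 \colon Y_1 \to \mu^{-1}(Y_0^2)$ is in general not surjective, so $T$ cannot simply be pushed down along it.

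First I would construct $\mu(T)$ by an explicit formula that corrects a chosen lift by bundle gerbe factors. A point of $\mu^{-1}(Y_0^2)$ over $x \in X_1$ is a pair $(p_0, p_1)$ with $p_0$ in the fibre of $Y_0$ over $d_1(x)$ and $p_1$ in the fibre over $d_0(x)$. Choosing any $y \in Y_1$ lifting $x$ (possible since $Y_1 \to X_1$ is a surjective submersion), I set
$$
\mu(T)_{(p_0, p_1)} := T_y \otimes P_{(p_0, d_1 y)} \otimes P_{(p_1, d_0 y)}^{*},
$$
where the two $P$-factors are defined because $p_0, d_1 y$ lie in a common fibre of $Y_0$ and likewise $p_1, d_0 y$. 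Working over the surjective submersion $Y_1 \times_{X_1} \mu^{-1}(Y_0^2) \to \mu^{-1}(Y_0^2)$, repeated use of the bundle gerbe multiplication of $(P, Y_0)$ together with the defining isomorphism $\delta(P) = \delta_{Y_1}(T)$ shows the right-hand side is canonically independent of $y$, and associativity of the multiplication gives the cocycle condition for the resulting descent data; hence $\mu(T)$ descends to a well-defined line bundle on $\mu^{-1}(Y_0^2)$.

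Next I would verify the two properties this bundle must satisfy. Computing $\delta_{\mu^{-1}(Y_0^2)}(\mu(T))$ on a pair $((p_0,p_1),(p_0',p_1'))$ using a common lift $y$, the factors $T_y$ cancel and the bundle gerbe multiplication over $d_1(x)$ and over $d_0(x)$ collapses the remaining $P$-factors to $P_{(p_1,p_1')} \otimes P_{(p_0,p_0')}^{*}$, which is exactly $\delta(P)$ on $\mu^{-1}(Y_0^2)^{[2]}$; thus $\mu(T)$ trivialises $(\delta(P), \mu^{-1}(Y_0^2))$. Evaluating the formula instead at $(p_0, p_1) = (d_1 y, d_0 y) = \mu_1(y)$ leaves $T_y \otimes P_{(d_1 y, d_1 y)} \otimes P_{(d_0 y, d_0 y)}^{*}$, and the canonical triviality of $P$ on the diagonal yields a canonical isomorphism $\mu_1^{-1}(\mu(T)) \simeq T$.

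Finally, since $\mu(T)$ is a trivialisation of $\delta(\cG)$, Proposition \ref{prop:trivialisation descent} produces the line bundle $A_{\mu(T)} \to X_2$ together with the canonical trivialisation of $\delta(A_{\mu(T)})$. Because $\mu_1^{-1}(\mu(T)) \simeq T$ and $\mu_\bullet$ covers the identity on $X_\bullet$, the argument of Proposition \ref{prop:simp-ext-pullback} identifies $A_{\mu(T)}$ canonically with $A_T$, compatibly with the canonical trivialisations of $\delta(A_{\mu(T)})$ and $\delta(A_T)$. I would then take $\mu(s)$ to be the section of $A_{\mu(T)}$ corresponding to $s$ under this identification, so that $\delta(s)=1$ transports to $\delta(\mu(s))=1$ and the triple $(\mu^{-1}(Y_0^{\bullet+1}), \mu(T), \mu(s))$ is a simplicial extension; its pullback along $\mu_\bullet$ is $(Y_\bullet, \mu_1^{-1}(\mu(T)), \mu(s)) = (Y_\bullet, T, s)$ by Proposition \ref{prop:simp-ext-pullback}. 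I expect the main obstacle to be the construction of $\mu(T)$: getting the placement of the tensor factors right so that the value is independent of the lift $y$ is the crux, and it is precisely the non-surjectivity of $\mu_1$ that forces this explicit $P$-factor correction in place of a naive descent of $T$.
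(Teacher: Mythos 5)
Your proof is correct and takes essentially the same route as the paper: your explicit formula $\mu(T)_{(p_0,p_1)} = T_y \otimes P_{(p_0,d_1y)} \otimes P_{(p_1,d_0y)}^{*}$, with its independence-of-lift and cocycle checks, is precisely the construction of Proposition \ref{prop:descent-trivialisations} (Appendix A) unrolled for the morphism $\mu_1 \colon Y_1 \to \mu^{-1}(Y_0^2)$ of surjective submersions over $X_1$, which the paper simply cites. The remaining steps, identifying $A_{\mu(T)}$ with $A_T$ compatibly with the canonical trivialisations of $\delta(A_{\mu(T)})$ and $\delta(A_T)$ and transporting $s$, coincide with the paper's argument.
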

\begin{proof}
Notice first that over $X_1$ we have two bundle gerbes  $(\delta(P), \mu^{-1}(Y^2))$ and its pullback by $\mu \colon Y_1 \to \mu^{-1}(Y^2)$ which is 
  $(\mu^{-1}(\delta(P)), Y_1)$.  If $T \to Y_1$ is a trivialisation of $(\mu^{-1}(\delta(P)), Y_1)$ then from Proposition \ref{prop:descent-trivialisations} we know that there is an induced trivialisation $\mu(T) \to \mu^{-1}(Y^2)$ 
of $(\delta(P), \mu^{-1}(Y^2))$ which pulls back to $T \to Y_1$. The construction of $A_T$ in Proposition \ref{prop:trivialisation descent} depends only on $\delta(P)$ and $T$ and it follows that there are isomorphisms $A_T \simeq A_{\mu(T)}$ which 
commute with the trivialisations of $\delta(A_T)$ and $\delta(A_{\mu(T)})$.  Hence we can define a section $\mu(s)$ of $A_{\mu(T)}$ 
and $(\mu^{-1}(Y_0^{\bullet+1}), \mu(T) , \mu(s))$ is a simplicial extension.  By 
Proposition \ref{prop:descent-trivialisations} and the  construction it pulls back in the required manner.
\end{proof}

Proposition \ref{prop:there exists a mu extension} means we could simplify the definition of simplicial extension by always working with the simplicial surjective submersion $\mu^{-1}(Y_0^{\bullet + 1}) \to X_\bullet$. Indeed, when we specify a simplicial extension using only the pair $(T, s)$ then it will be understood that $Y_\bullet = \mu^{-1}(Y_0^{\bullet + 1})$.
We note however that as we shall see in the next section, in practice we find that it is useful to allow the extra flexibility of the choice of $Y_\bullet$.

Recall from Example \ref{ex:constant} that we have the map $X_0 \to X_k$ induced by the unique map $[k] \to [0]$ and that the composition $X_0 \to X_k \to X_0$ with each of the projections $\mu_i \colon X_k \to X_0$ is the identity. This means $\mu^{-1}(Y^{\bullet + 1}) \to X_\bullet$ pulls back to $Y^{[\bullet + 1]} \to X_0^{(\bullet)}$ and the simplicial line bundle $[T, s]$ pulls back to a simplicial line bundle on $Y^{[\bullet + 1]}$, which is a bundle gerbe on $X_0$. 
We have 

\begin{proposition}
\label{prop:Prestriction} 
Let $(T, s)$ be a simplicial extension of the bundle gerbe $(P, Y)$ over $X_\bullet$. 
The bundle gerbe on $X_0$ defined by the pullback of $[T, s]$ to $X_0^{(\bullet)}$ via the simplicial map $X_0^{(\bullet)} \to X_\bullet$ is isomorphic to $(P, Y)$. 
\end{proposition}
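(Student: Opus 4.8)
The plan is to pull the whole simplicial extension back along the canonical simplicial map $X_0^{(\bullet)} \to X_\bullet$ and then recognise the result via the constant case of Proposition~\ref{prop:Psimplicial}. By Proposition~\ref{prop:there exists a mu extension} I may take $Y_\bullet = \mu^{-1}(Y^{\bullet+1})$. As noted just before the statement, each composite of the canonical map $X_0 \to X_k$ with a projection $\mu_i \colon X_k \to X_0$ is the identity, so $\mu^{-1}(Y^{\bullet+1}) \to X_\bullet$ pulls back to $Y^{[\bullet+1]} \to X_0^{(\bullet)}$; in particular $Y_1 = \mu^{-1}(Y^2)$ pulls back to $Y^{[2]}$, the trivialisation $T \to Y_1$ pulls back to a $U(1)$-bundle $T' \to Y^{[2]}$, and $[T,s]$ pulls back to a simplicial line bundle $[T',s']$ over $Y^{[\bullet+1]}$, that is, to a bundle gerbe $(T',Y)$ on $X_0$. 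The task is to identify $(T',Y)$ with $(P,Y)$.

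First I would note that $T'$ remains a trivialisation: pulling back the trivialising isomorphism for $\delta(\cG)$ along the degree-one map $X_0 \to X_1$ shows that $T'$ trivialises the restriction of $\delta(\cG)$ to the constant simplicial manifold $X_0^{(\bullet)}$. By Proposition~\ref{prop:Psimplicial} that very bundle gerbe is also trivialised by $P$ itself, and the trivialisation $P$, with its canonical section, induces exactly the simplicial line bundle $(P,Y)$. Hence $T'$ and $P$ are two trivialisations of a single bundle gerbe over $X_0$, so they differ by a line bundle $L := T'\oslash P \to X_0$, and there is an isomorphism $T' \cong P \otimes \pi^* L$ of $U(1)$-bundles over $Y^{[2]}$, where $\pi\colon Y^{[2]} \to X_0$ is the projection.

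It then remains to show that $L$ is trivial and that the isomorphism respects the products. The key input is that $(T',Y)$ is a genuine bundle gerbe: the section $s'$, which satisfies $\delta(s')=1$ in the sense of Definition~\ref{simplicial:bundle}, furnishes an associative product $m'$ on $T'$. Writing $T' \cong P \otimes \pi^* L$ and using that all the fibres involved are $U(1)$-torsors, the product $m'$ factors over $Y^{[3]}$ as a product on $P$ tensored with a fibrewise product $L_p \otimes L_p \to L_p$ on $L$; but an associative product on a $U(1)$-torsor singles out a unit, so such a product over $X_0$ is precisely a trivialisation of $L$. Thus $L$ is trivial, $T' \cong P$, and under this isomorphism $m'$ is carried to the product of $P$, giving $(T',Y) \cong (P,Y)$.

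I expect this final step to be the main obstacle. That $T'$ and $P$ agree as $U(1)$-bundles up to the line bundle $L$ is formal; the real work is to check that the product $m'$ carried by the pulled-back section $s'$ matches the bundle gerbe product of $P$ under the isomorphism, equivalently that the coherence datum $\delta(s)=1$, descending through the construction of $A_T$, forces the $L$-factor of $m'$ to be the associativity that trivialises $L$. Disentangling the two tensor factors $P$ and $\pi^* L$ inside $m'$ and tracking the section through the descent defining $A_T$ is what will require genuine care rather than formal manipulation.
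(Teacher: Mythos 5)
Your skeleton matches the paper's: pull the extension back to $X_0^{(\bullet)}$, note that both the pulled-back trivialisation $T'$ and $P$ itself trivialise $\delta(P)$ over the constant simplicial manifold (Proposition \ref{prop:Psimplicial}), and reduce to showing the difference line bundle $L = T'\oslash P \to X_0$ is trivial compatibly with the sections. The gap is exactly the step you flag at the end, and as justified it does not go through: the claim that $m'$ ``factors over $Y^{[3]}$ as a product on $P$ tensored with a fibrewise product $L_x\otimes L_x\to L_x$'' does not follow from the fibres being $U(1)$-torsors. Torsor-ness only gives, for each point of $Y^{[3]}$ lying over $x\in X_0$, a torsor map $L_x\otimes L_x\to L_x$ (the ratio of $m'$ and $m_P\otimes\mathrm{id}$); this is a section of $\pi^{-1}(L^*)$ over $Y^{[3]}$, and a priori it varies along the fibres of $Y^{[3]}\to X_0$, in which case it defines no product on $L$ over $X_0$ and nothing trivialises $L$. (A side remark: any torsor map $V\otimes V\to V$ is automatically associative and has a unique idempotent, so the appeal to associativity adds nothing; the entire content is the descent to $X_0$.) The ingredient that forces the descent is precisely the simplicial-extension datum you set aside: $s'$ is \emph{by definition} the pullback of the section $s$ of $A_{T'}\to X_2$, and for the constant simplicial manifold $X_2 = X_0$, so the $L$-component of $s'$ is constant on the fibres of $Y^{[3]}\to X_0$. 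Without invoking this, you have only used that $(T',Y)$ is a bundle gerbe trivialising $\delta(P)$, which is strictly weaker information.

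The paper avoids the issue by never descending from $Y^{[3]}$ at all: it works directly with the already-descended objects. Given two simplicial extensions $(T_1,s_1)$, $(T_2,s_2)$ of $(P,Y)$ over $X_0^{(\bullet)}$, it forms $L = T_1\oslash T_2\to X_1 = X_0$ and observes that $\delta(L) = A_{T_1}\otimes A_{T_2}^*$; since every face map of $X_0^{(\bullet)}$ is the identity, $\delta(L) = L$ canonically, so $s_1\otimes s_2^*$ is immediately a trivialising section of $L$, and the resulting isomorphism $T_1\to T_2$ tautologically carries $s_1$ to $s_2$, hence is an isomorphism of simplicial line bundles. If you replace your torsor computation with this two-line observation (or, equivalently, justify your factorisation by tracking $s'$ back through Proposition \ref{prop:trivialisation descent} to the section $s$ over $X_2 = X_0$), your argument closes up; as written, the decisive step is asserted rather than proved.
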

\begin{proof}
The pullback of the simplicial extension of $(P, Y)$ over the simplicial manifold $X_\bullet$ is 
a simplicial extension of $(P, Y)$ over $X_0^{(\bullet)}$.    We know from Proposition \ref{prop:Psimplicial} that 
$(P, Y)$ also defines a simplicial extension over $X_0^{(\bullet)}$. So we only need to prove that any two simplicial extensions of $(P, Y)$
over $X_0^{(\bullet)}$ are isomorphic.  Assume then that $(T_1, s_1)$ and $(T_2, s_2)$ are simplicial extensions of $(P, Y)$
over $X_0^{(\bullet)}$.  Notice that $T_1$ and $T_2$ are trivialisations of $\delta(P)$ so there exists a line
bundle $L \to X_0$ defined by $L = T_1 \oslash T_2$.  Then $\delta(L) = A_{T_1} \otimes A_{T_2}^*$. But 
as all the maps are the identity $\delta(L) = L$, which has a section defined by $s_1 \otimes s_2^*$.   Hence
$L$ is trivial which defines an isomorphism from $T_1$ to $T_2$. Moreover, by definition this isomorphism 
maps $s_1$ to $s_2$ and hence defines an isomorphism of simplicial line bundles. 
\end{proof}

 The same canonical simplicial map $X_0^{(\bullet)}\to X_{\bullet}$
 induces a homomorphism 
 $$
 H^n(X_\bullet, \RR) \to H^n(X_0, \RR)
 $$
  for every $n\geq 0$.

\begin{proposition}
\label{prop:simplicial-class}
A simplicial extension of $\cG$ over $X_\bullet$ defines a class in the simplicial de Rham cohomology $H^3(X_\bullet, \RR)$ which maps to the real Dixmier--Douady class 
of $\cG$ in $H^3(X_0, \RR)$.  
\end{proposition}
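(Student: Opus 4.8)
The plan is to realise the class by an explicit cocycle $\eta = (\eta_{(0,3)}, \eta_{(1,2)}, \eta_{(2,1)}, \eta_{(3,0)})$ in the total complex $\cA^*(X_\bullet)$, built from a choice of connective data on the simplicial extension, and then to read off its image in $H^3(X_0,\RR)$ directly from the bottom component $\eta_{(3,0)}$.

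First I would choose a bundle gerbe connection $\nabla$ and curving $f$ for $\cG = (P, Y_0)$, with $3$-curvature $\omega \in \Omega^3(X_0)$; this closed form represents the real Dixmier--Douady class of $\cG$, so I set $\eta_{(3,0)} = \omega$, and $d\eta_{(3,0)} = 0$. The induced connective data on $\delta(\cG) = (\delta(P), Y_1)$ has $3$-curvature $\delta(\omega)$, since $\delta$ commutes with $d$ in the bicomplex. As $T \to Y_1$ trivialises $\delta(\cG)$, choosing a connection $\nabla_T$ restricting to the induced bundle gerbe connection gives, exactly as in the discussion preceding Lemma \ref{lemma:strong-triv}, an error $2$-form $\nu_T \in \Omega^2(X_1)$ with $d\nu_T = -\delta(\omega)$. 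Setting $\eta_{(2,1)} = -\nu_T$ then yields the equation $-d\eta_{(2,1)} + \delta\eta_{(3,0)} = 0$.

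The $1$-form $\eta_{(1,2)}$ comes from $A_T \to X_2$. By Lemma \ref{lemma:delta-squared-strong-triv} the bundle $\delta(T)$ strongly trivialises $(\delta^2(P), Y_2)$, and $A_T = \delta(T) \oslash (Y_2 \times U(1))$ is its descent to $X_2$ as in Proposition \ref{prop:trivialisation descent}. Equipping $A_T$ with the descended connection, the $\oslash$-curvature formula together with strong triviality identifies its curvature with $\delta(\nu_T) \in \Omega^2(X_2)$. Since $A_T$ is a $U(1)$-bundle with the nowhere-zero section $s$, I set $\eta_{(1,2)} \in \Omega^1(X_2)$ to be the connection $1$-form of $A_T$ read through $s$; then $d\eta_{(1,2)} = \delta(\nu_T) = -\delta\eta_{(2,1)}$, which is the equation $d\eta_{(1,2)} + \delta\eta_{(2,1)} = 0$. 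Finally I take $\eta_{(0,3)} = 0$, so the two remaining equations $-d\eta_{(0,3)} + \delta\eta_{(1,2)} = 0$ and $\delta\eta_{(0,3)} = 0$ reduce to $\delta\eta_{(1,2)} = 0$. This is exactly the infinitesimal content of $\delta(s) = 1$: reading the induced connection on $\delta(A_T) \to X_3$ through $\delta(s)$ gives $\delta\eta_{(1,2)}$, while $\delta(s)$ coincides with the canonical (parallel) trivialising section $1$ of $\delta(A_T)$. Thus $D\eta = 0$ and $\eta$ represents a class in $H^3(X_\bullet, \RR)$. Well-definedness follows by tracking the effect of the choices: a different curving changes $\eta$ by a global $2$-form on $X_0$, a different compatible connection on $T$ by a form pulled back from $X_1$, and so on, each alteration being an explicit coboundary $D\rho$. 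For the last assertion, the map $H^3(X_\bullet,\RR) \to H^3(X_0,\RR)$ is induced by the canonical simplicial map $X_0^{(\bullet)} \to X_\bullet$; pulling $\eta$ back to the constant simplicial manifold $X_0^{(\bullet)}$ and passing to cohomology picks out the de Rham class of the bottom component $\eta_{(3,0)} = \omega$, which is the real Dixmier--Douady class of $\cG$, as required.

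The step I expect to require the most care is the pair of top equations on $X_3$, namely deducing $\delta\eta_{(1,2)} = 0$ from $\delta(s) = 1$. This hinges on keeping the induced connections on $A_T$ and on $\delta(A_T)$ compatible with the canonical trivialisations of Proposition \ref{prop:trivialisation descent} and Lemma \ref{lemma:delta-squared-strong-triv}, and on checking that the canonical trivialising section is parallel, so that the vanishing of the comparison between $\delta(s)$ and $1$ passes to the vanishing of $\delta\eta_{(1,2)}$. The identification of the curvature of $A_T$ with $\delta(\nu_T)$ via the $\oslash$-formula, together with the sign bookkeeping throughout, is the other place where attention is needed.
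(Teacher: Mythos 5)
Your proposal is correct and follows essentially the same route as the paper: the cocycle $(0,\eta_{(1,2)},\eta_{(2,1)},\eta_{(3,0)})$ is built from the $3$-curvature of a connection and curving on $\cG$, the error $2$-form $-\nu_T$ of a compatible connection on $T$, and $s^*(\nabla_{A_T})$ on $A_T$, with $\delta(s)=1$ giving $\delta\eta_{(1,2)}=0$ and the choices tracked as coboundaries exactly as in the paper. The final identification of the image in $H^3(X_0,\RR)$ with the class of $\eta_{(3,0)}$ also matches.
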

\begin{proof}
Let the bundle gerbe $\cG = (P, Y_0)$ where $\pi \colon Y_0 \to X_0$ is a surjective submersion. Let $(\nabla, f)$ be a connection and curving for $\cG$ and denote by $\eta_{(3, 0)} \in \Omega^3(X_0)$ the 
corresponding three-curvature so that $\pi^*(\eta_{(3, 0)}) = df $.  Note that this is unique up to addition to $f$ of $\pi^* (\rho_{(2, 0)})$ 
where $\rho_{(2, 0)} \in \Omega^2(X_0)$, which changes $\eta_{(3, 0)}$ to $\eta_{(3, 0)} + d \rho_{(2, 0)}$. 

 Notice first that $T$ is a trivialisation of $\delta(\cG)$ over $X_1$ and that $\delta(\cG)$ has connection $\delta(\nabla)$ and 
 curving $\delta(f)$. As in the discussion following Lemma \ref{lemma:strong-triv} we choose a connection $\nabla_T$  for $T$ satisfying $\delta_Y( \nabla_T) = \delta(\nabla)$  and define $\eta_{(2, 1)} = -\nu_T \in \Omega^2(X_1)$ so that 
 \begin{equation}
 \label{eq:eta21}
 \pi^*(\eta_{(2, 1)}) = -F_T + \delta(f).
 \end{equation}
   Hence $\pi^*(\delta (\eta_{(3, 0)})) = \delta ( d f)  = \pi^*(d\eta_{(2,1)})$ so that 
$ -d \eta_{(2,1)} + \delta (\eta_{(3, 0)})  = 0$ as required. 

The choice of $\nabla_T$ is unique up to adding $ \pi^*(\rho_{(1, 1)})$ where $\rho_{(1, 1)} \in \Omega^1(X_1)$ which  changes $\eta_{(2, 1)} $ to $\eta_{(2, 1)} - d \rho_{(1, 1)}$.  If we also  change $f$ as above then we change $\eta_{(2, 1)} $ to $\eta_{(2, 1)} - d \rho_{(1, 1)} + \delta (\rho_{(2, 0)})$. 

Notice that  $\delta(T) $ has a connection 
$\delta(\nabla_T) $ whose curvature is $\delta(\nu_T)  = \delta(\eta_{(2, 1)})$.  This descends to a connection $\nabla_{A_T} $ on  $A_T$  which has a trivialising section $s$. We define
\begin{equation}
\label{eq:eta12}
\eta_{(1, 2)} = s^*(\nabla_{A_T}) \in \Omega^1(X_2)
\end{equation}
so that 
$$
\pi^*(\eta_{(1, 2)}) = s^*(\d(\nabla_T)) \in \Omega^1(X_2).
$$

 Moreover $d \pi^*(\eta_{(1, 2)})  = -\pi^* (\delta \eta_{(2, 1)})$.
Hence $ d\eta_{(1, 2)} + \delta (\eta_{(2, 1)})= 0$. 

Notice that if we change $\nabla_T$ by adding $\pi^*(\rho_{(1, 1)})$ then $\eta_{(1, 2)}$ changes by addition of $\d (\rho_{(1, 1)})$. 

Lastly because $\delta(s) = 1$ we conclude that  $\delta (\eta_{(1, 2)} )= 0$. 

Notice that if we change $(\nabla, f)$ to $(\nabla + \delta(\alpha), f + d\alpha)$ for $\alpha \in \Omega^1(Y_0)$, then $\nabla_T$ changes to $\nabla_T + \alpha$ and the cocycle
is unchanged. 

Finally we conclude that the simplicial extension defines  a  cocycle
$$
\eta = (0, \eta_{(1, 2)} , \eta_{(2, 1)}, \eta_{(3, 0)} ) \in  \cA^3(X_\bullet),
$$
 whose class in  $H^3(X_\bullet, \RR)$ we have seen is independent of choices. 
\end{proof}

We note that 

\begin{proposition}
The pullback of simplicial extensions defined in Proposition \ref{prop:simp-ext-pullback} preserves 
simplicial classes.
\end{proposition}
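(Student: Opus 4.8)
The plan is to show that, with compatible choices, the two simplicial extensions give rise to \emph{the same} representative cocycle in $\cA^3(X_\bullet)$; since Proposition~\ref{prop:simplicial-class} guarantees that the resulting class in $H^3(X_\bullet,\RR)$ is independent of all choices made in its construction, equality of one pair of representatives already forces equality of the classes. Throughout I would use that $\phi$ is a morphism over $X_\bullet$ which is the identity in degree $0$, so that $Y'_0 = Y_0$, the projection $\pi \colon Y_0 \to X_0$ is common to both, and $\pi_k \circ \phi_k = \pi'_k$ as maps $Y'_k \to X_k$ for every $k$.

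First I would run the construction of Proposition~\ref{prop:simplicial-class} for $(Y_\bullet, T, s)$, fixing a connection and curving $(\nabla, f)$ on $\cG = (P, Y_0)$, a connection $\nabla_T$ on $T$ with $\delta_{Y}(\nabla_T) = \delta(\nabla)$, and the descended connection $\nabla_{A_T}$ on $A_T$; this yields the cocycle $\eta = (0, \eta_{(1,2)}, \eta_{(2,1)}, \eta_{(3,0)})$. For the pulled-back extension $(Y'_\bullet, \phi^{-1}(T), s)$ I would make the \emph{induced} choices: the same $(\nabla, f)$ on $\cG$ (legitimate since $Y'_0 = Y_0$), the pulled-back connection $\phi^{-1}(\nabla_T)$ on $\phi^{-1}(T)$, and the descent of its $\delta$ to $A_{\phi^{-1}(T)}$. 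Because $\cG$ and $(\nabla, f)$ are unchanged, the bottom component $\eta_{(3,0)}$ is literally the same.

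Next I would verify the middle two components are unchanged. For $\eta_{(2,1)}$: the curvature satisfies $F_{\phi^{-1}(T)} = \phi^*(F_T)$, and since $\phi$ commutes over $X_1$ with the maps of Lemma~\ref{L:Y to delta Y} the curving $\delta(f)$ restricts on $Y'_1$ to $\phi^*$ of its restriction on $Y_1$; hence $-F_{\phi^{-1}(T)} + \delta(f) = \phi^*(-F_T + \delta(f)) = \phi^*(\pi^* \eta_{(2,1)}) = \pi'^*(\eta_{(2,1)})$, and as $\pi'$ is a surjective submersion this forces $\eta'_{(2,1)} = \eta_{(2,1)}$. For $\eta_{(1,2)}$: by the proof of Proposition~\ref{prop:simp-ext-pullback} one has $\delta(\phi^{-1}(T)) = \phi^{-1}(\delta(T))$, and since descent along $Y'_2 \to X_2$ is canonical and compatible with descent along $Y_2 \to X_2$, the line bundle with connection $(A_{\phi^{-1}(T)}, \nabla_{A_{\phi^{-1}(T)}})$ is canonically identified with $(A_T, \nabla_{A_T})$; moreover the pulled-back extension carries the \emph{same} section $s$. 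Therefore $\eta'_{(1,2)} = s^*(\nabla_{A_{\phi^{-1}(T)}}) = s^*(\nabla_{A_T}) = \eta_{(1,2)}$.

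Concluding, the induced choices reproduce exactly the cocycle $\eta$, so the two simplicial extensions represent the same class in $H^3(X_\bullet,\RR)$. The one point requiring care — and the step I expect to be the main obstacle — is the canonical identification of $(A_{\phi^{-1}(T)}, \nabla_{A_{\phi^{-1}(T)}}, s)$ with $(A_T, \nabla_{A_T}, s)$, i.e.\ the functoriality of the descent construction of Proposition~\ref{prop:trivialisation descent} under the simplicial morphism $\phi$. This is essentially the compatibility already exploited in the proof of Proposition~\ref{prop:there exists a mu extension}, and amounts to checking that descent of both the connection and the trivialising section commutes with pullback along a map lying over the base $X_2$; once this is in hand, everything else is a matter of tracking which form lives over which space.
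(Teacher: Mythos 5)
Your proposal is correct and follows essentially the same route as the paper, whose proof is the one-line observation that all the data used to define the class pulls back; you have simply made this explicit by tracking each component $\eta_{(3,0)}$, $\eta_{(2,1)}$, $\eta_{(1,2)}$ under the induced choices and using injectivity of $\pi'^*$ together with the canonical identification $A_{\phi^{-1}(T)}\simeq A_T$. The point you flag as the main obstacle (functoriality of the descent of Proposition~\ref{prop:trivialisation descent} under $\phi$) is indeed the only substantive check, and it is already implicit in the paper's Proposition~\ref{prop:simp-ext-pullback}, which asserts that the pulled-back extension carries the \emph{same} section $s$.
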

\begin{proof}
This follows immediately from the construction as all the data used to define the class pulls back.
\end{proof}

\begin{definition}
We call the class defined in Proposition \ref{prop:simplicial-class} the (real) {\em extension class} of the simplicial extension and 
denote it by $\ec(Y_\bullet, T, s)$, or simply $\ec(T, s)$, since the class is independent of $Y_\bullet$.
\end{definition}

\begin{remark} 
Although we shall not need to make use of them, it is worth pointing out the 
following facts.   To every simplicial extension of $\cG$ over $X_\bullet$ 
there is associated an {\em integral} extension class in $H^3(\norm{X_\bullet},\ZZ)$ 
which classifies simplicial extensions in the sense that there is an isomorphism 
between a suitable set of equivalence class of simplicial extensions and 
$H^3(\norm{X_{\bullet}},\ZZ)$.  Furthermore, a bundle gerbe $\cG$ over $X_0$ has a simplicial extension 
if and only if there is a bundle gerbe $\widetilde{\cG}$ over the geometric 
realization $\norm{X_{\bullet}}$ whose Dixmier--Douady class in $H^3(\norm{X_{\bullet}},\ZZ)$ 
is the integral extension class of the simplicial extension and whose 
restriction to $X_0$ is stably isomorphic to $\cG$.  A proof of this fact uses 
some of the machinery of simplicial \v{C}ech cohomology (see for example \cite{Freidlander}), which 
would require a lengthy discussion.  Since these facts are not central to our paper 
we have chosen to omit them.    
\end{remark}

\begin{proposition} 
If $(T_i, s_i)$ is a simplicial extension of  $\cG_i$ for $i = 1, 2$ then 
$(T_1 , s_1) \otimes (T_2, s_2) = (T_1 \otimes T_2,  s_1 \otimes s_2)$ is a simplicial extension for $\cG_1 \otimes \cG_2$
 and $\ec(( T_1, s_1)\otimes (T_2, s_2)) = \ec(T_1, s_1) + \ec(T_2, s_2)$. 
 \end{proposition}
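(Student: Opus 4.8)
The plan is to first arrange for both simplicial extensions to be based on a common simplicial surjective submersion, then to verify that tensoring the trivialisations and the descended sections yields a simplicial extension of $\cG_1\otimes\cG_2$, and finally to compute the extension class by choosing the connective data on the product to be the tensor product of the individual choices. Throughout I would exploit the single structural fact that tensor product commutes with pullback, dualisation and the $\delta$ operation, so that $\delta$ is multiplicative: $\delta(P_1\otimes P_2)=\delta(P_1)\otimes\delta(P_2)$.

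For the first step I would take $Y_\bullet = Y_{1,\bullet}\times_{X_\bullet} Y_{2,\bullet}$, which is again a simplicial surjective submersion since these are preserved under pullback (as noted after the definition of simplicial surjective submersion), with $Y_0 = Y_{1,0}\times_{X_0} Y_{2,0}$ the base of $\cG_1\otimes\cG_2$. Pulling back $T_i$ along the projections gives trivialisations of $(\delta(P_i),Y_1)$, and by multiplicativity of $\delta$ the tensor product $T_1\otimes T_2$ trivialises $\delta(\cG_1\otimes\cG_2)$ over $X_1$. The construction of $A_T$ in Proposition \ref{prop:trivialisation descent} is built entirely from $\delta(T)$ together with the $\oslash$ descent furnished by the canonical strong trivialisation of $\delta^2$, and every operation involved is compatible with tensor products; hence there is a natural isomorphism $A_{T_1\otimes T_2}\simeq A_{T_1}\otimes A_{T_2}$ intertwining the canonical trivialisations of $\delta(A_{T_1\otimes T_2})$ and of $\delta(A_{T_1})\otimes\delta(A_{T_2})$. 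Under this isomorphism $s_1\otimes s_2$ is a section of $A_{T_1\otimes T_2}$, and $\delta(s_1\otimes s_2)=\delta(s_1)\otimes\delta(s_2)=1\otimes 1=1$, so the triple $(Y_\bullet, T_1\otimes T_2, s_1\otimes s_2)$ satisfies the axioms of a simplicial extension of $\cG_1\otimes\cG_2$, establishing the first assertion.

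For the additivity of the extension class I would use that the class is independent of all choices (Proposition \ref{prop:simplicial-class}) and select the connective data on the product compatibly. Taking the connection and curving on $\cG_1\otimes\cG_2$ to be $\nabla=\nabla_1\otimes 1+1\otimes\nabla_2$ and $f=f_1+f_2$ gives $\eta_{(3,0)}=\eta_{(3,0)}^{1}+\eta_{(3,0)}^{2}$; taking $\nabla_{T_1\otimes T_2}=\nabla_{T_1}\otimes 1+1\otimes\nabla_{T_2}$ gives $\nu_{T_1\otimes T_2}=\nu_{T_1}+\nu_{T_2}$, hence $\eta_{(2,1)}=\eta_{(2,1)}^{1}+\eta_{(2,1)}^{2}$; and using the induced connection $\nabla_{A_{T_1}}\otimes 1+1\otimes\nabla_{A_{T_2}}$ on $A_{T_1}\otimes A_{T_2}\simeq A_{T_1\otimes T_2}$ gives $s^*(\nabla_{A_{T_1\otimes T_2}})=s_1^*(\nabla_{A_{T_1}})+s_2^*(\nabla_{A_{T_2}})$, hence $\eta_{(1,2)}=\eta_{(1,2)}^{1}+\eta_{(1,2)}^{2}$. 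The cocycle representing $\ec((T_1,s_1)\otimes(T_2,s_2))$ is therefore the sum of the cocycles representing the $\ec(T_i,s_i)$, and the claimed identity follows on passing to cohomology in $\cA^3(X_\bullet)$.

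The main obstacle is the bookkeeping in the second step: one must check that the isomorphism $A_{T_1\otimes T_2}\simeq A_{T_1}\otimes A_{T_2}$ is genuinely natural and, crucially, compatible with the descent of the canonical section of $\delta^2(T)$ used in Proposition \ref{prop:trivialisation descent}, so that the two strong trivialisations match on the $U(1)$ factor and the identity $\delta(s_1\otimes s_2)=1$ holds on the nose rather than merely up to a coherence term. Once this compatibility is pinned down, everything else reduces to the additivity of curvatures under tensor products of connections and the multiplicativity of $\delta$, $(-)^*$ and $\oslash$ with respect to tensor product.
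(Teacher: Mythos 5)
Your proposal is correct; the paper actually states this proposition without any proof, evidently regarding it as routine, and your argument is precisely the natural filling-in of the omitted details: multiplicativity of $\delta$ and of the descent construction under tensor product gives $A_{T_1\otimes T_2}\simeq A_{T_1}\otimes A_{T_2}$ compatibly with the canonical trivialisations, so that $(T_1\otimes T_2, s_1\otimes s_2)$ is a simplicial extension of $\cG_1\otimes\cG_2$, and choosing tensor-product connective data makes the representative cocycle $(0,\eta_{(1,2)},\eta_{(2,1)},\eta_{(3,0)})$ manifestly additive. I see no gaps; the compatibility worry you flag in your last paragraph is harmless because the canonical section of $\delta^2(Q)$ in equation (2.1) of the paper is itself defined fibrewise by tensoring elements with their duals and therefore commutes on the nose with tensor products.
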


When $\cG$ is trivial so that $\cG = (\delta_Y (R), Y)$ for $R \to Y $ we can construct a {\em trivial simplicial extension} $(\delta(R), 1)$, where $\d(R)$ denotes the induced trivialisation of $\d(\cG)$.

If we regard $X_0 \to X_0$ as a surjective submersion we can identify $X_0^{[2]}$ with $X_0$ and the 
trivial line bundle $X_0 \times U(1)$ gives us a strongly trivial bundle gerbe. The product of any bundle
gerbe $\cG$ with this bundle gerbe is naturally isomorphic to itself.  Any simplicial line bundle $(J, \sigma)$ gives us 
a simplicial extension $(J,  \sigma)$ of  this trivial bundle gerbe. 

It follows that a simplicial line bundle can form a product with a  simplicial extension to give rise to a new simplicial 
extension. Or more directly, given a simplicial extension $(Y_\bullet, T,  s)$ with $\pi \colon Y_\bullet \to X_\bullet$ and a simplicial line bundle $(J, \sigma)$, we can 
define a new trivialisation $T \otimes \pi^{-1}(J)$.  Then $A_{T \otimes \pi^{-1}(J)} = A_T \otimes \delta(J)$ which has a section 
$s \otimes \delta(\sigma)$. 
 Hence we have a new simplicial extension  $(T, s) \otimes (J, \sigma) = ( T \otimes \pi^{-1}(J), s \otimes \delta(\sigma))$. 

If we pick a connection $\nabla_J$ for $J$ then we obtain an integral two-form $F_J \in \Omega^2(X_1)$ with $d F_J = 0$. 
Also we can define $\alpha \in \Omega^1(X_2)$ by $\alpha = \sigma^*(\delta(\nabla_J))$ and $\delta(\alpha) = (\delta \sigma)^*(\delta^2(\nabla_J)) = 0$ and 
$\delta(F_J) = d \alpha$. Hence a simplicial line bundle has a simplicial Chern class $\scc(J, \sigma) \in H^3(X_\bullet, \RR)$ 
represented by $(0, \alpha, F, 0)$. Clearly this is in the kernel of the map $H^3(X_\bullet, \RR) \to H^3(X_0, \RR)$.

If $(T, s)$ and $\cG$ has the class
$$
(0, \eta_{(1, 2)} , \eta_{(2, 1)}, \eta_{(3, 0)} )$$
then  $(T \otimes \pi^{-1}(J),  s \otimes \delta(\sigma))$ 
has the class 
$$
(0, \eta_{(1, 2)}+\a , \eta_{(2, 1)} +F, \eta_{(3, 0)} ).
 $$
 and hence $\ec((T, s) \otimes (J, \sigma)) = \ec(T, s) + \scc(J, \sigma)$.
 In fact the converse of this is true. 
 
 \begin{proposition}
Let $(T_1, s_1)$ and $(T_2, s_2)$ be two simplicial extensions of $\cG$. Then there exists a simplicial line
bundle $(T_1 \oslash T_2, s_1 \otimes s_2^*)$ such that $(T_1, s_2) = (T_2, s_2) \otimes (T_1 \oslash T_2, s_1 \otimes s_2^*)$.
\end{proposition}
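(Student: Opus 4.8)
The plan is to build the required simplicial line bundle directly out of the two trivialisations, and then to read the claimed product decomposition off the identity $A_{T\otimes\pi^{-1}(J)}=A_T\otimes\delta(J)$ recorded in the discussion preceding the proposition. (Here the left-hand side of the asserted equality should read $(T_1,s_1)$.)

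First I would note that $T_1$ and $T_2$ are both trivialisations of one and the same bundle gerbe $\delta(\cG)=(\delta(P),Y_1)$ over $X_1$. The $\oslash$ construction applied to this bundle gerbe then yields a line bundle $J:=T_1\oslash T_2\to X_1$ together with its defining property $T_1=T_2\otimes\pi^{-1}(J)$, where $\pi\colon Y_1\to X_1$. This $J$ is the candidate underlying bundle of the simplicial line bundle. To pin down $\delta(J)$ I would substitute $T=T_2$ into $A_{T\otimes\pi^{-1}(J)}=A_T\otimes\delta(J)$ and use $T_1=T_2\otimes\pi^{-1}(J)$ to obtain $A_{T_1}=A_{T_2}\otimes\delta(J)$, hence a canonical isomorphism $\delta(J)\simeq A_{T_1}\otimes A_{T_2}^*$ over $X_2$. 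Under this isomorphism the sections $s_1$ of $A_{T_1}$ and $s_2$ of $A_{T_2}$ assemble into a section $s_1\otimes s_2^*$ of $\delta(J)$.

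It remains to check that $(J,s_1\otimes s_2^*)$ really is a simplicial line bundle, i.e.\ that $\delta(s_1\otimes s_2^*)=1$. Since $\delta$ is multiplicative and the definition of a simplicial extension gives $\delta(s_1)=1$ and $\delta(s_2)=1$, this is immediate; this is the only step that genuinely uses that $s_1,s_2$ are admissible rather than arbitrary trivialising sections. Finally I would verify the product formula: forming $(T_2,s_2)\otimes(J,s_1\otimes s_2^*)$ produces the trivialisation $T_2\otimes\pi^{-1}(J)=T_1$ and the section $s_2\otimes(s_1\otimes s_2^*)$ of $A_{T_2}\otimes\delta(J)\simeq A_{T_1}$, which collapses to $s_1$ upon cancelling $s_2\otimes s_2^*=1$. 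I expect the main obstacle to be purely bookkeeping: ensuring that the identifications $\delta(J)\simeq A_{T_1}\otimes A_{T_2}^*$, $A_{T_2}\otimes\delta(J)\simeq A_{T_1}$ and $T_2\otimes\pi^{-1}(J)=T_1$ are all the canonical ones coming from $\oslash$, so that the final equalities of sections hold on the nose and not merely up to isomorphism.
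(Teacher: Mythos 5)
Your argument is correct and follows essentially the same route as the paper's proof: both identify $T_1\oslash T_2$ as the difference of the two trivialisations of $\delta(\cG)$, deduce $A_{T_1}=A_{T_2}\otimes\delta(T_1\oslash T_2)$ so that $s_1\otimes s_2^*$ gives a section of $\delta(T_1\oslash T_2)$, and conclude $\delta(s_1\otimes s_2^*)=1$ from $\delta(s_1)=\delta(s_2)=1$. Your observation that the statement's left-hand side should read $(T_1,s_1)$ is also correct.
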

\begin{proof} 
The construction is straightforward. We have two trivialisations of $\delta(\cG)$, which differ by a line 
bundle $T_1 \oslash T_2 \to X_1$.  This gives us $A_{T_1} = A_{T_2} \otimes \delta (T_1 \oslash T_2)$
so that $\delta (T_1 \oslash T_2)$ has a section $s_1 \otimes s_2^*$. As $\delta(s_1) = \delta(s_2) = 1$ it follows that $\delta(s_1 \otimes s_2^*) = 1$.
\end{proof}

\begin{proposition} 
Let $(T, s)$ be a simplicial extension of $\cG$ and $\rho \colon \cH \to \cG$ be a stable isomorphism.
Then there is a canonically defined simplicial extension $\rho^{-1}(T, s)$ of $\cH$.  Moreover 
$\ec( \rho^{-1}(T, s)) = \ec(T, s)$. 
\end{proposition}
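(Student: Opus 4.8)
The plan is to transport the extension data along the stable isomorphism and then to verify that the cocycle of Proposition~\ref{prop:simplicial-class} is unchanged. First I would use that the operation $\delta$ is functorial for stable isomorphisms, so that $\rho$ induces a stable isomorphism $\delta(\rho)\colon \delta(\cH)\to\delta(\cG)$ of bundle gerbes over $X_1$. Writing $\rho$ as a trivialisation $R$ of $\cG\otimes\cH^*$ on the appropriate fibre product, the stable isomorphism $\delta(\rho)$ is given by $\delta(R)$. Composing the trivialisation $T$ of $\delta(\cG)$ with $\delta(\rho)$ then yields a trivialisation of $\delta(\cH)$, which I call $\rho^{-1}(T)$; schematically $\rho^{-1}(T)=T\otimes\delta(R)^*$ after suitable pullback. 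This supplies the second datum of a simplicial extension of $\cH$.

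Next I would produce the section. The construction of the descent line bundle $A_T$ in Proposition~\ref{prop:trivialisation descent}, together with the descent of the canonical trivialisation of $\delta^2(T)$, depends only on $\delta(\cG)$ and the chosen trivialisation $T$; the analogous statement holds for $\delta(\cH)$ and $\rho^{-1}(T)$. Since $\delta(\rho)$ is an isomorphism intertwining these, it induces a canonical isomorphism $A_{\rho^{-1}(T)}\simeq A_T$ over $X_2$ that commutes with the canonical trivialisations of $\delta(A_{\rho^{-1}(T)})$ and $\delta(A_T)$ over $X_3$. Transporting $s$ through this isomorphism gives a section $\rho^{-1}(s)$ of $A_{\rho^{-1}(T)}$ with $\delta(\rho^{-1}(s))=1$, using $\delta(s)=1$ and the compatibility of the isomorphism with the trivialisations. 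The triple $(\rho^{-1}(T),\rho^{-1}(s))$ is the desired simplicial extension of $\cH$, and its canonicity is exactly the naturality of the construction in $\rho$.

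For the equality of extension classes I would compute both cocycles with compatibly chosen connective data. Choose a connection on the line bundle $R$ defining $\rho$; a connection and curving $(\nabla,f)$ for $\cG$ then transports to a connection and curving $(\nabla',f')$ for $\cH$ with the same three-curvature $\eta_{(3,0)}$, where I use that stably isomorphic bundle gerbes have equal real Dixmier--Douady class and that the residual closed $2$-form can be absorbed by adjusting $f'$ so that $f-f'=F_R$ on the common submersion. The induced connection on $\rho^{-1}(T)=T\otimes\delta(R)^*$ produces the same $\eta_{(2,1)}$ via~\eqref{eq:eta21}: since $F_{\rho^{-1}(T)}=F_T-\delta(F_R)$ and $\delta(f)-\delta(f')=\delta(F_R)$, the contribution of $F_R$ cancels. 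Likewise, under $A_{\rho^{-1}(T)}\simeq A_T$ the descended connection agrees (the extra factor $\delta(R)^*$ carries the canonical trivial $A$), so $\eta_{(1,2)}$ from~\eqref{eq:eta12} is unchanged. Hence the cocycles $(0,\eta_{(1,2)},\eta_{(2,1)},\eta_{(3,0)})$ coincide in $\cA^3(X_\bullet)$ and $\ec(\rho^{-1}(T,s))=\ec(T,s)$.

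The step I expect to be the main obstacle is the bookkeeping in the last paragraph: checking that, for the chosen connection on $R$, the differential-form contributions of the stable isomorphism cancel termwise so that $\eta_{(2,1)}$ and $\eta_{(1,2)}$ are \emph{literally} unchanged rather than merely altered by an exact amount. Because Proposition~\ref{prop:simplicial-class} guarantees independence of the class from the connective data, it suffices to exhibit one compatible choice realising this cancellation, so the difficulty is organisational rather than conceptual.
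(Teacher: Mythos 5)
The paper actually states this proposition without proof, so there is nothing to compare your argument against; judged on its own, your construction is the natural one and is essentially sound. Writing $\rho$ as a trivialisation $R$ of $\cG\otimes\cH^*$, taking $\rho^{-1}(T)=T\otimes\delta(R)^*$ as a trivialisation of $\delta(\cH)$, and observing that $A_{T\otimes\delta(R)^*}\simeq A_T\otimes A_{\delta(R)^*}$ with $A_{\delta(R)^*}$ canonically trivial is exactly the right move; note that the canonical triviality of $A_{\delta(R)^*}$ is precisely Lemma \ref{lemma:delta-squared-strong-triv} applied to $R$ as a trivialisation of $\cG\otimes\cH^*$ over $X_0$, combined with Lemma \ref{lemma:strong-triv}, so you can make that step precise rather than appealing to ``functoriality of $\delta$.'' Two points deserve more care. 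First, your data naturally live over the joint submersion $Y\times_{X_0}Z$ and its iterates, whereas the Definition of a simplicial extension of $\cH=(Q,Z)$ requires the level-zero space of the simplicial surjective submersion to be $Z$ itself; this is repaired by transporting the trivialisation along the map of submersions using Proposition \ref{prop:descent-trivialisations} (as in Proposition \ref{prop:there exists a mu extension}), and you should say so. Second, in the class computation the curving $f'=f-F_R$ is a curving for the pullback of $\cH$ to the joint submersion rather than for $(Q,Z)$ itself, which is harmless but worth flagging; and the literal termwise cancellation you worry about in your last paragraph is not actually needed, since Proposition \ref{prop:simplicial-class} already shows the class is independent of all connective choices for a fixed gerbe, so it suffices that one compatible choice makes the two cocycles agree, which your computation $-F_{T'}+\delta(f')=-F_T+\delta(f)$ exhibits.
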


\subsection{Descent for bundle gerbes}\label{SS:descent}

Let $\pi \colon M \to N$ be a surjective submersion. 
  
\begin{proposition} 
\label{prop:easy-descent}
If $\pi \colon M \to N $ is a surjective submersion and $\cG$ is a bundle 
gerbe on $N$ then $\pi^*(\cG)$ admits a simplicial extension to $M^{[\bullet +1]}$. 
\end{proposition}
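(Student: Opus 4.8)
The plan is to exploit the fact that the simplicial manifold $M^{[\bullet+1]}$ of Example~\ref{ex:fibre prod} lies entirely over $N$, and thereby reduce the statement to the tautological simplicial extension of $\cG$ furnished by Proposition~\ref{prop:Psimplicial}. Concretely, write $\cG = (Q,Z)$ with $Z \to N$ a surjective submersion and $Q \to Z^{[2]}$. The composite of any vertex projection $M^{[k+1]} \to M$ with $\pi$ is independent of the chosen vertex, since on $M^{[k+1]}$ all vertices have the same image in $N$; so the maps $\varepsilon_k \colon M^{[k+1]} \to N$, $(m_0,\dots,m_k) \mapsto \pi(m_0)$, commute with the face and degeneracy maps and assemble into a simplicial map $\varepsilon_\bullet \colon M^{[\bullet+1]} \to N^{(\bullet)}$ to the constant simplicial manifold of Example~\ref{ex:constant}. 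As $\varepsilon_0 = \pi$, the induced bundle gerbe $\varepsilon_0^{-1}(\cG)$ over $X_0 = M$ is exactly $\pi^*(\cG)$, so it suffices to produce a simplicial extension of $\cG$ over $N^{(\bullet)}$ and pull it back along $\varepsilon_\bullet$.

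First I would invoke Proposition~\ref{prop:Psimplicial}, which gives the tautological simplicial extension $(Z^{[\bullet+1]}, T, s)$ of $\cG$ over $N^{(\bullet)}$, with $T_{(z_0,z_1)} = Q_{(z_0,z_1)}$ trivialising $\delta(\cG)$ and $s$ the section of $A_T$ assembled from the bundle gerbe product. Pulling this back along $\varepsilon_\bullet$, the simplicial surjective submersion $Z^{[\bullet+1]} \to N^{(\bullet)}$ becomes $Y_k := M^{[k+1]} \times_N Z^{[k+1]} \to M^{[k+1]}$ (so that $Y_0 = M \times_N Z$ is precisely the submersion of $\pi^*(\cG)$), the trivialisation pulls back to $\varepsilon_1^{-1}(T)$, and the section to $\varepsilon_2^{-1}(s)$. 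The concrete reason this works is the observation underlying `easy descent': over $X_1 = M^{[2]}$ the two face maps become equal after composing with $\pi$, so $\delta(\pi^*(\cG))$ is canonically the difference $\cG' \otimes (\cG')^{*}$ of $\cG' = (\pi d_0)^{*}(\cG)$ with itself and is therefore canonically trivial, while the coherence $\delta(\varepsilon_2^{-1}(s)) = 1$ is just the pullback of the associativity of the product on $\cG$ along $\varepsilon_3$.

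The one step that is not literally a previously stated result is the base change itself: Proposition~\ref{prop:simp-ext-pullback} treats only pullback along a morphism of simplicial surjective submersions over a \emph{fixed} base, whereas here I pull an entire simplicial extension back along the simplicial map $\varepsilon_\bullet$ of the bases. I expect this to be the main, though routine, obstacle. To discharge it I would check, using the discussion in Section~\ref{SS:ss&bg} that pullback of bundle gerbes and of their trivialisations commutes with the operator $\delta$, that the descended line bundle $A_T$ of Proposition~\ref{prop:trivialisation descent} and the defining condition $\delta(s) = 1$ are both preserved under $\varepsilon_\bullet$. The triple $(Y_\bullet, \varepsilon_1^{-1}(T), \varepsilon_2^{-1}(s))$ then satisfies the three conditions in the definition of a simplicial extension, and the resulting extension of $\varepsilon_0^{-1}(\cG) = \pi^*(\cG)$ over $M^{[\bullet+1]}$ is the one asserted.
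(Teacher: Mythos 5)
Your proposal is correct and follows essentially the same route as the paper: the paper's proof likewise observes that the vertex projections assemble into a simplicial map $M^{[\bullet+1]} \to N^{(\bullet)}$ and pulls back the tautological simplicial extension of Proposition~\ref{prop:Psimplicial}. You simply spell out in more detail the base-change step that the paper leaves implicit.
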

\begin{proof} 
The projection $M^{[k]} \to N = N^{(k)}$ defines a morphism
of simplicial manifolds $M^{[\bullet + 1]} \to N^{(\bullet)}$ and it suffices to pull back the simplicial extension defined
in Proposition \ref{prop:Psimplicial}.
\end{proof}

The converse of Proposition \ref{prop:easy-descent} is in fact true. Before proving this, we make two definitions as follows:

\begin{definition}
If $M \to N$ is a surjective submersion and $\cG$ a bundle gerbe on $M$ then 
{\em $(M \to N)$-descent data} for $\cG$ is a simplicial extension of  $\cG$ 
over $M^{[\bullet+1]}$. 
\end{definition}

\begin{definition}
If $\pi \colon M \to N$ is a surjective submersion and $\cG$ a bundle gerbe on $M$ then we say that {\em 
$\cG$ descends to $M$} if 
there exists a bundle gerbe $\cH $ over $ N$ with $\pi^*(\cH)$ stably isomorphic to $\cG$.
\end{definition}

\begin{proposition} 
\label{prop:descent}
If $\pi \colon M \to N$ is a surjective submersion and $\cG$ a bundle gerbe on $M$ then $\cG$
descends to $N$ if and only if there exists $(M \to N)$-descent data for $\cG$. 
\end{proposition}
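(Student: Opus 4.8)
The plan is to treat the two implications separately; the forward direction reduces quickly to results already established, while the converse requires constructing the descended gerbe explicitly from the descent data. For the forward direction, suppose $\cG$ descends, so that there is a bundle gerbe $\cH$ over $N$ with $\pi^*(\cH)$ stably isomorphic to $\cG$. By Proposition~\ref{prop:easy-descent} the pullback $\pi^*(\cH)$ admits a simplicial extension over $M^{[\bullet+1]}$, that is, $(M\to N)$-descent data. Fixing a stable isomorphism $\rho\colon\cG\to\pi^*(\cH)$, I would then transport this extension back to $\cG$ using the proposition stated just before Section~\ref{SS:descent}, which says that a simplicial extension pulls back along a stable isomorphism, thereby producing $(M\to N)$-descent data for $\cG$ itself.

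For the converse, let $(T,s)$ be $(M\to N)$-descent data for $\cG=(P,Y)$, and write $p\colon Y\to M$ for the submersion of $\cG$. By Proposition~\ref{prop:there exists a mu extension} I may assume $Y_\bullet=\mu^{-1}(Y^{\bullet+1})$ with $\mu$ as in Example~\ref{ex:mu inverses1}; for $X_\bullet=M^{[\bullet+1]}$ this identifies each $Y_k=\mu^{-1}(Y^{k+1})$ with the $(k+1)$-fold fibre product of the composite submersion $Y\to M\to N$. In particular $Y_1=Y\times_N Y$ and $Y_2=Y\times_N Y\times_N Y$. Consequently the associated simplicial line bundle $[T,s]$ is a simplicial line bundle over $(Y\to N)^{[\bullet+1]}$: the trivialisation $T$ is a $U(1)$-bundle over $(Y\to N)^{[2]}$, while the section of $\delta(T)$ induced by $s$ supplies a multiplication on $T$, with $\delta(s)=1$ expressing its associativity. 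By the equivalence between bundle gerbes and simplicial line bundles recalled in Section~\ref{SS:ss&bg}, $[T,s]$ therefore defines a bundle gerbe $\cH=(T,Y\to N)$ over $N$.

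It then remains to show $\pi^*(\cH)\simeq\cG$, which I would do by relating both gerbes to the single bundle $T|_{Y^{[2]}_M}$, the restriction of $T$ to the sub-fibre-product $Y^{[2]}_M\subset Y\times_N Y$ of pairs lying in a common fibre of $p$. On the one hand, pulling $[T,s]$ back along the diagonal morphism $M^{(\bullet)}\to M^{[\bullet+1]}$ restricts $T$ precisely to $Y^{[2]}_M$, so Proposition~\ref{prop:Prestriction} identifies $(T|_{Y^{[2]}_M},Y)$ with $\cG$. On the other hand, the map $\phi\colon Y\to M\times_N Y$, $y\mapsto(p(y),y)$, is a morphism over $M$ from $p\colon Y\to M$ to the submersion $M\times_N Y\to M$ of $\pi^*(\cH)$; pulling a bundle gerbe back along such a map of submersions over $M$ yields a stably isomorphic gerbe, and a direct computation gives $\phi^{-1}\pi^{-1}(T)\cong T|_{Y^{[2]}_M}$. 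Hence $(T|_{Y^{[2]}_M},Y)$ is stably isomorphic to $\pi^*(\cH)$, and combining the two identifications gives $\cG\simeq\pi^*(\cH)$.

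The main obstacle is the bookkeeping that distinguishes fibre products of $Y$ taken over $M$ from those taken over $N$: the identification $Y_k\cong(Y\to N)^{[k+1]}$, the verification that $s$ furnishes an associative multiplication for $T$, and the computation $\phi^{-1}\pi^{-1}(T)\cong T|_{Y^{[2]}_M}$ all depend on keeping these two fibre products straight. Once the spaces are correctly identified, each remaining step is a routine unwinding of the definitions.
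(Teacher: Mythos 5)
Your argument is correct and follows essentially the same route as the paper: the forward direction via Proposition~\ref{prop:easy-descent} together with transport along a stable isomorphism, and the converse by identifying $\mu^{-1}(Y^{\bullet+1})$ with the fibre products of $Y\to N$, taking $\cH$ to be the bundle gerbe defined by $[T,s]$, and comparing $\pi^*(\cH)$ with $\cG$ by restricting along $Y\to M\times_N Y$ and invoking Proposition~\ref{prop:Prestriction}. Your version is in fact slightly more explicit than the paper's in naming the comparison map and the intermediate bundle $T|_{Y^{[2]}_M}$, but the content is the same.
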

\begin{proof}
We have established  one direction already in Proposition \ref{prop:easy-descent}.
 Let  $\pi \colon M \to N$ be a surjective submersion and $\cG = (P, Y)$  a bundle gerbe over $M$ with a simplicial extension $(T, s)$ for the simplicial manifold $M^{[\bullet+1]}$.  Then let $\overline Y = Y \to N$, regarded as a surjective submersion over $N$. 
Then there is an equality of simplicial surjective submersions $\overline Y^{[\bullet+1]} = \mu^{-1}(Y^{\bullet+1})$. 
The simplicial extension defines a simplicial line bundle over $\mu^{-1}(Y^{\bullet+1})$ and hence  a simplicial line bundle over $\overline Y^{[{\bullet+1}]}$,
i.e.~a bundle gerbe. We take the descended bundle gerbe $\cH$ to be the one defined by this simplicial line bundle.

We need to check that the pullback of $\cH$ is stably isomorphic to $\cG$. Consider $\pi^{-1}(\overline Y) \to M$.  This contains $Y$ so we have a morphism of simplicial manifolds 
$$Y^{[\bullet+1]} \to 
\pi^{-1}(\overline Y^{[\bullet+1]})  \to  \overline Y^{[\bullet+1]} = \mu^{-1}(Y^{\bullet+1}).
$$
 Notice that this composition maps $Y^{[\bullet+1]}$ to the 
subset $Y^{[\bullet+1]} \subset \mu^{-1}(Y^{\bullet+1})$ and is the identity.   It follows that  if we start with the 
simplicial extension $(T, s)$ as a simplicial line bundle on  $ \mu^{-1}(Y^{\bullet+1}) $, we descend by regarding it as a simplicial line bundle on $\overline Y^{[\bullet+1]}$ and we pullback to $\pi^{-1}(\overline Y^{[\bullet+1]})$ and restrict 
to $Y^{[\bullet+1]}$, that is the same as just restricting $(T, s)$ to $Y^{[\bullet+1]}$, which by Proposition \ref{prop:Prestriction}
we know to be $(P, Y)$. Hence $\pi^{-1}(\cH)$ is stably isomorphic to $\cG$.
\end{proof}

We are interested in several particular cases of simplicial extensions, arising from actions of Lie groups and actions of 2-groups. We devote the rest of this paper to the study of these.

\section{Equivariant bundle gerbes}\label{S:eq}

\subsection{Strong and weak group actions on bundle gerbes}\label{SS:group actions}

Recall that if  $M $ is a  manifold on which a Lie group $G$ acts smoothly on the right we have the simplicial manifold  $EG(M)_\bullet $ from Example \ref{ex:EG(M)}.  We define:

\begin{definition}
If $G$ acts smoothly on $M$ and $\cG$ is a bundle gerbe on $M$ a {\em weak action} of $G$ on $\cG$  is a  simplicial 
extension   for $\cG$ over $EG(M)_\bullet$. 
\end{definition}

Notice that the simplicial extension class of a weak action lives in $H^3(EG(M)_\bullet, \RR) = H^3_G(M, \RR)$, the equivariant
de Rham cohomology of $M$. 

\begin{definition}[c.f. \cite{Gom2, MatSte, Mei}]
Let $\cG = (P, Y)$ be a bundle gerbe over $M$.  A {\em strong action} of $G$ on $\cG$ is a smooth 
action of $G$ on $Y$ covering the action on $M$ and a smooth action of $G$ on $P \to Y^{[2]}$ by bundle morphisms covering the induced action 
on $Y^{[2]}$ and commuting with the bundle gerbe product.  We say that $\cG$ is a {\em strongly equivariant bundle gerbe}.
\end{definition}

\begin{proposition} 
\label{prop:strong2weak}
A strong action of $G$ on $\cG$ induces a weak action. 
\end{proposition}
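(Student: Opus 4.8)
The plan is to build the simplicial extension directly out of the strong action data. First I would take as simplicial surjective submersion the nerve $EG(Y)_\bullet \to EG(M)_\bullet$, where $EG(Y)_n = Y \times G^n$ with face and degeneracy maps defined exactly as in Example \ref{ex:EG(M)} but using the given $G$-action on $Y$. Because this action covers the action on $M$, each projection $Y \times G^n \to M \times G^n$ is a surjective submersion commuting with the face and degeneracy maps, so $EG(Y)_\bullet \to EG(M)_\bullet$ is a genuine simplicial surjective submersion with $EG(Y)_0 = Y$.

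Next I would produce the trivialisation $T$ of $\delta(\cG) = (\delta(P), EG(Y)_1)$ over $X_1 = M \times G$. Unwinding the constructions of Subsection \ref{SS:ss&bg}, the map $EG(Y)_1 = Y \times G \to \delta(Y)$ of Lemma \ref{L:Y to delta Y} sends $(y,g) \mapsto (yg, y)$, so the restriction of $\delta(P)$ along this map has fibre $P_{(yg, y'g)} \otimes P^*_{(y, y')}$ over $\bigl((y,g),(y',g)\bigr)$. The bundle morphism $\phi_g \colon P_{(y,y')} \to P_{(yg, y'g)}$ from the strong action determines, by $U(1)$-equivariance, a well-defined element $\phi_g(p) \otimes p^*$ of this fibre independent of $p \in P_{(y,y')}$; I would take $T = (Y \times G) \times U(1)$ to be the trivial bundle and let this element be the trivialising section. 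That a trivialisation must respect the bundle gerbe product is then exactly the requirement that $\phi_g$ commute with the bundle gerbe multiplication, which is part of the definition of a strong action.

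It remains to build the descent section $s$ over $X_2 = M \times G^2$. By Proposition \ref{prop:trivialisation descent} the line bundle $\delta(T) \to EG(Y)_2 = Y\times G^2$ descends to $A_T \to X_2$ and the canonical section of $\delta^2(T)$ descends to the canonical trivialisation of $\delta(A_T)$. Since $T$ is trivial, $\delta(T)$ carries its constant section, and I would check that this section exhibits $\delta(T)$ as a \emph{strong} trivialisation of the strongly trivial bundle gerbe $(\delta^2(P), EG(Y)_2)$. Concretely, the induced trivialisation morphism is $\delta(\psi)$, and tracking the three face maps $d_0(y,g_1,g_2) = (yg_1,g_2)$, $d_1(y,g_1,g_2) = (y,g_1g_2)$, $d_2(y,g_1,g_2) = (y,g_1)$ shows that over a point of $EG(Y)_2$ this morphism is the composite $\phi_{g_2}\circ \phi_{g_1}\circ \phi_{g_1 g_2}^{-1}$. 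The strictness of the action, $\phi_{g_2}\circ\phi_{g_1} = \phi_{g_1 g_2}$, makes this the identity, so the strong-trivialisation hypothesis of Lemma \ref{lemma:strong-triv} is satisfied and the constant section descends to a section $s \colon X_2 \to A_T$.

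Finally I would verify $\delta(s) = 1$. Pulling $s$ back to $EG(Y)_2$ returns the constant section of $\delta(T)$, so $\delta(s)$ pulls back to $\delta$ of that section, which is the canonical section of $\delta^2(T)$; by the second half of Proposition \ref{prop:trivialisation descent} this is precisely the canonical trivialisation of $\delta(A_T)$, whence $\delta(s) = 1$. This last identity is the coherence condition distinguishing a weak action, and for a strong action it is forced by the associativity $\phi_g\phi_h = \phi_{gh}$. I expect the main obstacle to be the bookkeeping of the third paragraph—correctly identifying the fibres of $\delta(P)$ and $\delta^2(P)$ after restriction along the maps of Lemma \ref{L:Y to delta Y}, and confirming that strictness collapses $\delta(\psi)$ to the identity—together with the routine but fiddly check that the section defined by $\phi_g$ genuinely respects the bundle gerbe product.
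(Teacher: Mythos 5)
Your proposal is correct and follows essentially the same route as the paper: take $EG(Y)_\bullet \to EG(M)_\bullet$ as the simplicial surjective submersion, observe that the strong action makes $(\delta(P), Y\times G)$ strongly trivial via $p \mapsto \phi_g(p)\otimes p^*$ (the paper's $1\otimes g^{-1}$), take $T$ to be the trivial bundle, and let $s$ be the descended canonical section with $\delta(s)=1$ forced by strictness. You simply spell out the bookkeeping (the strong-trivialisation check via Lemma \ref{lemma:strong-triv} and the collapse of $\delta(\psi)$ to the identity) that the paper leaves implicit.
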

\begin{proof} 
We construct a simplicial extension $(EG(Y)_{\bullet}, T, s)$ of $\cG = (P, Y)$ over $EG(M)_\bullet$. First we need a trivialisation of $(\d(P), Y\times G)$, which has fibre over $(y_1, y_2, g) \in Y^{[2]} \times G = (Y \times G) \times_{M \times G} (Y \times G)$ given by $P_{(y_1, y_2)}^* \otimes P_{(y_1 g, y_2 g)}$. Hence, $(\d(P), Y \times G)$ is strongly trivial via the isomorphism $\xymatrix@1{P_{(y_1, y_2)}^* \otimes P_{(y_1 g, y_2 g)} \ar@{|->}[r]^>>>>{1 \otimes g^{-1}} & P_{(y_1, y_2)}^* \otimes P_{(y_1 , y_2 )} \simeq U(1)}.$ We therefore take $T$ to be the trivial bundle $(Y \times G) \times U(1)$. Thus $\d(T)$ is trivial and descends to the trivial bundle with its canonical section $s$, and so $\d(s) = 1$.
\end{proof}

\subsection{Descent for equivariant bundle gerbes}

If $\pi \colon M \to N$ is a principal $G$-bundle it is straightforward to show  that if  a bundle gerbe
$\cG = (P, Y)$ on $M$ admits a strong action of $G$ then it descends to a quotient bundle gerbe  $(P/G, Y/G)$ on $N$. We now show that even for a weak action of $G$ bundle gerbes descend.

\begin{proposition}
Let $M \to N$ be a principal $G$-bundle.  If $\cG$ is a bundle gerbe on $M$ acted on weakly by $G$ then $\cG$ descends to a bundle gerbe on $N$, which is given explicitly by Proposition \ref{prop:descent}.
\end{proposition}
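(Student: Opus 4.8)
The plan is to deduce the statement directly from the descent criterion of Proposition~\ref{prop:descent}, after identifying the two relevant simplicial manifolds. By definition, a weak action of $G$ on $\cG$ is a simplicial extension $(Y_\bullet, T, s)$ of $\cG$ over the simplicial manifold $EG(M)_\bullet$. The key point is that, because $M \to N$ is a \emph{principal} $G$-bundle, Lemma~\ref{L:EG(M)=M^[bullet]} supplies an isomorphism of simplicial manifolds $EG(M)_\bullet \simeq M^{[\bullet+1]}$.

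First I would transport the given simplicial extension across this isomorphism. As it is an isomorphism of the base simplicial manifold, the simplicial surjective submersion $Y_\bullet \to EG(M)_\bullet$, the trivialisation $T \to Y_1$ of $\delta(\cG)$, and the section $s$ of $A_T$ all carry over to yield a simplicial extension of $\cG$ over $M^{[\bullet+1]}$. By the definition preceding Proposition~\ref{prop:descent}, this is exactly $(M \to N)$-descent data for $\cG$.

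I would then invoke Proposition~\ref{prop:descent}: the existence of $(M \to N)$-descent data is equivalent to $\cG$ descending to a bundle gerbe $\cH$ on $N$, and that proposition constructs $\cH$ explicitly as the bundle gerbe associated to the induced simplicial line bundle over $\overline{Y}^{[\bullet+1]}$, where $\overline{Y} = Y$ is regarded as a surjective submersion over $N$. This establishes descent in the form claimed.

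Since the argument is almost entirely formal, I do not expect a genuine obstacle. The single point needing care is verifying that transporting a simplicial extension across the isomorphism of Lemma~\ref{L:EG(M)=M^[bullet]} again produces a simplicial extension, i.e.\ that the data $(Y_\bullet, T, s)$ is natural with respect to isomorphisms of the underlying simplicial manifold. This is immediate, since the operation $\delta$, the construction of $A_T$ in Proposition~\ref{prop:trivialisation descent}, and the constraint $\delta(s) = 1$ are all built functorially from the face maps, which the isomorphism intertwines.
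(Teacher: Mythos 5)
Your proposal is correct and follows essentially the same route as the paper: the paper's proof also reduces the statement to identifying a weak $G$-action with $(M\to N)$-descent data via the isomorphism $EG(M)_\bullet \simeq M^{[\bullet+1]}$ of Lemma~\ref{L:EG(M)=M^[bullet]}, and then invokes Proposition~\ref{prop:descent}. Your extra remark justifying that simplicial extensions transport across isomorphisms of the base simplicial manifold is a point the paper leaves implicit, but it is the same argument.
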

\begin{proof}
The proof is straightforward and only requires us to show that a weak $G$ action on $\cG$ is the same 
as $(M \to N)$-descent data for $\cG$.  Equivalently, we need to show that the
simplicial manifolds $EG(M)_\bullet$ and $M^{[\bullet + 1]}$ are isomorphic, which is true by Lemma \ref{L:EG(M)=M^[bullet]}.
\end{proof}

A similar result is proved in  \cite{Gom} on the level of cohomology using the definition of equivariance from \cite{Bry}.

  In the case of a strong action we now have two ways to descend the bundle gerbe. These are related by

\begin{proposition}\label{P:two descents are the same}
Let  $M \to N$ be a principal $G$-bundle and $\cG = (P, Y)$ a bundle gerbe on $M$ acted on strongly by $G$. The quotient
of $\cG$ by the strong $G$ action and the descent of $\cG$ by the induced weak $G$ action are stably isomorphic.
\end{proposition}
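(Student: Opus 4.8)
The plan is to compare the two descended bundle gerbes on $N$ by exhibiting a stable isomorphism between them, using as an intermediary the single surjective submersion $\overline{Y} = Y \to N$ which underlies both constructions. Both descents produce a bundle gerbe whose surjective submersion over $N$ is (a quotient or fibre-product version of) $Y \to N$, so the two objects live over comparable spaces and the task is to identify the two $U(1)$-bundles over the relevant double fibre products together with their multiplications.

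First I would recall both constructions explicitly. The strong quotient is $(P/G, Y/G)$, where $G$ acts on $Y$ and on $P \to Y^{[2]}$ commuting with the bundle gerbe product; here $Y/G \to N$ is a surjective submersion and $P/G \to (Y/G)^{[2]}$ inherits the descended multiplication. On the other hand, Proposition \ref{prop:descent} (via Proposition \ref{prop:strong2weak}) realizes the weak descent through the isomorphism $EG(M)_\bullet \simeq M^{[\bullet+1]}$ of Lemma \ref{L:EG(M)=M^[bullet]}: the induced weak action is the simplicial extension $(EG(Y)_\bullet, T, s)$ with $T$ the \emph{trivial} trivialisation $(Y\times G)\times U(1)$, and one regards the associated simplicial line bundle over $\overline{Y}^{[\bullet+1]} = \mu^{-1}(Y^{\bullet+1})$ as a bundle gerbe over $N$. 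So the weak descent produces a bundle gerbe $\cH$ over $N$ whose surjective submersion is $Y \to N$ and whose $U(1)$-bundle over $Y^{[2]}_N$ is determined by $(T,s)$.

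Next I would use the principal bundle structure $M \to N$ to identify $Y^{[2]}_N$ (the fibre product over $N$) with $(Y\times_M Y)\times G$ in the standard way $(y_1,y_2)\mapsto (y_1, y_1', g)$ where $y_2 = y_1' g$, exactly as in the proof of Lemma \ref{L:EG(M)=M^[bullet]}. Under this identification the simplicial line bundle coming from the trivial $T$ and the bundle gerbe product gives, fibrewise, precisely $P$ pulled back and twisted by the $G$-action map $g$; this is the same gluing data that defines $P/G$ over $(Y/G)^{[2]}$, once one notes that $Y/G$ and $\overline{Y}^{[2]}_N$ encode the same identifications. Concretely I would write down the stable isomorphism as the descent of the identity-like map $P_{(y_1,y_2)} \to (P/G)$, checking that it intertwines the two multiplications; the strong-action commutativity with the bundle gerbe product is exactly what guarantees this map is well defined on quotients and compatible with $s$.

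The main obstacle, and the step requiring real care, is the bookkeeping of the two different surjective submersions over $N$: the strong quotient naturally uses $Y/G$, while the weak descent uses $\overline{Y} = Y$ regarded over $N$ via $\mu^{-1}(Y^{\bullet+1})$. These are not literally equal, so I expect the honest work to be constructing a morphism of surjective submersions relating $Y/G$ and $\overline{Y}^{[\bullet+1]}$ and verifying that the trivial trivialisation $T$, together with its canonical section $s$, pushes forward to exactly the descended multiplication on $P/G$. The cleanest route is to show both bundle gerbes pull back to stably isomorphic bundle gerbes on a common refinement over $N$ and then invoke that stable isomorphism descends; alternatively, since Proposition \ref{prop:descent} already produces $\cH$ as ``the same as just restricting $(T,s)$'' back over $M$ up to stable isomorphism, I would argue that both $(P/G, Y/G)$ and $\cH$ pull back to $\cG$ on $M$ up to stable isomorphism and that a bundle gerbe on $N$ with a specified such pullback is determined up to stable isomorphism, giving the result.
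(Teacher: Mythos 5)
Your primary route is essentially the paper's own proof. The paper likewise passes through $\overline Y = Y \to N$, uses Proposition \ref{prop:descent-trivialisations} to compute the descended trivialisation explicitly as $T_{(y_0,y_1)} = P_{(y_0,\,y_1 g^{-1})}$ (where $\pi(y_1)=\pi(y_0)g$), which is exactly your identification of $\overline Y^{[2]}$ with $Y^{[2]}\times G$ carrying a twisted copy of $P$, and then defines the map of surjective submersions $\rho\colon \overline Y\to Y/G$, $y\mapsto yG$, together with the evident bundle map $P_{(y_0,y_1g^{-1})}\to (P/G)_{(y_0G,y_1G)}$; the only substantive check is compatibility with the two multiplications, which, as you say, is precisely the statement that the strong $G$-action commutes with the bundle gerbe product. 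So the ``main obstacle'' you identify is the right one and is resolved exactly as you anticipate.

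One caution: the alternative you sketch in your final sentence does not work as stated. It is not true that a bundle gerbe on $N$ is determined up to stable isomorphism by its pullback to $M$; the map $H^3(N,\ZZ)\to H^3(M,\ZZ)$ induced by a principal bundle projection can have nontrivial kernel, so two non-isomorphic gerbes on $N$ may pull back to stably isomorphic gerbes on $M$. What does determine the descended gerbe is the pullback \emph{together with its descent data} (the simplicial extension), and comparing the two candidates at that level is exactly the explicit computation of your main route --- so there is no genuine shortcut to be had there. Stick with the first argument.
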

\begin{proof}

We defined the descent of the bundle gerbe $(P, Y)$ in Proposition \ref{prop:descent} given a weak action of $G$ in the form of a simplicial extension $(\mu^{-1}(Y^{\bullet + 1}), T, s)$ of $(P, Y)$ over $M^{[\bullet + 1]}$. This was given by the observation that $\mu^{-1}(Y^{\bullet + 1}) = \overline Y^{[\bullet+1]}$ (for $\overline Y \to M / G$ the submersion given by the composition $Y \to M \to M/G$) and then pulling back the simplicial extension to a simplicial line bundle $[T, s]$ over $\overline Y^{[\bullet + 1]}$. We also have a quotient bundle gerbe $(P/G, Y/G) $ over $M/G $. We  define a map of surjective submersions $\rho \colon \overline Y   \to Y/G $ over $N = M/G$ and show that ${\rho}^{-1}(P/G) \simeq T$ and that this map commutes with the bundle gerbe multiplication on $T \to \overline Y^{[2]}$ and $P/G \to (Y/G)^{[2]}$. 

Notice, however, that the weak action on $(P, Y)$ induced by the strong action of $G$, given in Proposition \ref{prop:strong2weak}, is a simplicial extension $(EG(Y)_\bullet, Y \times U(1), 1)$. Proposition \ref{prop:there exists a mu extension} tells us that this is the pullback of a simplicial extension $(\mu^{-1}(Y^{\bullet + 1}), T, s)$. Therefore, we need the simplicial line bundle coming from this simplicial extension. The trivialisation $T$ is given by Proposition \ref{prop:descent-trivialisations} as follows. We have the map $EG(Y)_1 = Y\times G \to \mu^{-1}(Y^{\bullet + 1}) =  \overline Y^{[2]} ; \ (y, g) \mapsto (y, yg)$. Then for $(y_0, y_1), (y_0'. y_1') \in \overline Y^{[2]}\times_{M \times G}  \overline Y^{[2]} $ we have $\d(P)_{(y_0, y_1), (y_0'. y_1')} = P_{(y_0, y_0')}^* \otimes P_{(y_1, y_1')} $. Note that $\pi(y_1) = \pi(y_0) g$ for some $g \in G$ and $\pi\colon Y \to M$, and similarly for $y_0', y_1'$, and that $\pi(y_i) = \pi(y_i')$. Then Proposition \ref{prop:descent-trivialisations} gives
\begin{align*}
T_{(y_0, y_1)}	&= U(1) \otimes \d(P)_{(y, yg)(y_0, y_1)}\\
			&= U(1) \otimes P_{(y, y_0)}^* \otimes P_{(yg, y_1)}\\
			&= U(1) \otimes P_{(y, y_0)}^* \otimes P_{(y, y_1g^{-1})}\\
			&= U(1) \otimes P_{(y_0, y_1g^{-1})}\\
			&= P_{(y_0, y_1g^{-1})},
\end{align*}
where again $g$ is such that $\pi(y_1) = \pi(y_0) g$. To construct the descended bundle gerbe we need the section of $\d(T) \to \overline Y^{[3]}$. For $(y_0, y_1, y_2) \in \overline Y^{[3]}$ and $\pi(y_i) = \pi(y_j) g_{ij}$ we have
$$
\d(T)_{(y_0, y_1, y_2)} = P_{(y_1, y_2 g_{12}^{-1})} \otimes  P_{(y_0, y_2 g_{02}^{-1})}^* \otimes  P_{(y_0, y_1 g_{01}^{-1})},
$$
and a section is given by $s(y_0, y_1, y_2) = p_{12} \otimes (p_{12}g_{01} \cdot p_{01})^* \otimes p_{01}$, where $p_{01} \in P_{(y_0, y_1)}, p_{12} \in P_{(y_1, y_2)}$ and $\cdot$ denotes the bundle gerbe multiplication in $P$.

We can now define $\rho \colon \overline Y \to Y/G $ by $\rho(y) = yG \in Y/G$ where the latter denotes the orbit of $y$ under $G$. Then we have a map $T_{(y_0, y_1)} = P_{(y_0, y_1g^{-1})} \to (P/G)_{(y_0G, y_1 G)}$ because the $G$ orbit of $(y_0, y_1g^{-1})$ is the pair of $G$ orbits $(y_0 G, y_1 g^{-1} G) = (y_0 G, y_1 G)$.  Hence we have described a bundle map $T \to P/G$ covering the induced map $\rho \colon \overline Y^{[2]} \to (P/G)^{[2]}$.

We need to prove that this map preserves the bundle gerbe product. The multiplication in $(P/G, Y/G)$ is given by the section $\sigma( y_0G, y_1G, y_2G) = p_{12} \otimes (p_{12} \cdot p_{01} G) \otimes p_{01}$ and it is easy to see that $\rho$ maps the section $s$ to $\sigma$ because the $G$ action on $P$ commutes with the bundle gerbe multiplication. It follows that the bundle gerbe product is preserved.
\end{proof}

\subsection{The class of a strongly equivariant bundle gerbe}

Assume that $G$ acts strongly on the bundle gerbe  $\cG = (P, Y)$  over $M$. Choose a bundle gerbe connection $\nabla$ for $P$ and a curving $f$. Let $\omega_{(3, 0)} \in \Omega^3(M)$ be the three-curving. We show how to write down an equivariant three-class for $\cG$.

Over $Y^{[2]} \times G$ there are two bundles $d_0^{-1}(P) $ and $d_1^{-1}(P)$ corresponding  to the bundle gerbes $d_0^{-1}(\cG)$ and $d_1^{-1}(\cG)$ over
$M \times G$. 
Let $\phi \colon d_0^{-1}(P) \to d_1^{-1}(P) $ be 
the action of right multiplication by $g^{-1}$.  On $d_0^{-1}(P)$ there 
are two connections: $d_0^{-1}(\nabla)$ and $ \phi^{-1} d_1^{-1}(\nabla) \phi$.  They 
are both bundle gerbe connections so we must have 
\begin{equation}
\label{eq:beta}
 d_0^{-1}(\nabla) - \phi^{-1} d_1^{-1}(\nabla) \phi = \delta_Y(\beta) ,
\end{equation}
for $\beta \in \Omega^1(Y \times G)$.  Similarly we have curvings $d_0^*(f)$ and $d_1^*(f)$ and 
$$
\delta_Y( d_0^*(f) - d_1^*(f) - d\beta) = 0,
$$
or 
\begin{equation}
\label{eq:omega21}
d_0^*(f) - d_1^*(f) - d\beta = \pi^*(\omega_{(2, 1)}),
\end{equation}
for $\omega_{(2, 1)}  \in \Omega^1(M \times G)$.  Moreover
\begin{align*}
\pi^*(d\omega_{(2, 1)}) &=  d_0^*(f) - d_1^*(f) \\
                      & = d_0^*(df) - d_1^*(df)\\
                       & = d_0^*(\pi^*(\omega_{(3, 0)})) - d_1^*(\pi^*(\omega_{(3, 0)})),
                       \end{align*}
 so that 
 $$
- d\omega_{(2, 1)} + \delta( \omega_{(3, 0)}) = 0.
 $$

Applying $d^{-1}_0$, $d^{-1}_1$ and  $d^{-1}_2$ to  \eqref{eq:beta} we obtain
\begin{align}
\mu_2^{-1}(\nabla) - \phi_0^{-1} \mu_1^{-1}(\nabla) \phi_0 &= \delta_Y(d_0^*(\beta))  \nonumber \\
\mu_2^{-1}(\nabla) - \phi_1^{-1} \mu_0^{-1}(\nabla) \phi_1 &= \delta_Y(d_1^*(\beta))   \nonumber \\
\mu_1^{-1}(\nabla) - \phi_2^{-1} \mu_0^{-1}(\nabla) \phi_2 &= \delta_Y(d_2^*(\beta)) \label{eq:lastline}\end{align}
where here $\mu_0(m, g_1, g_2) = m$, $\mu_1(m, g_1, g_2) = m g_1$ and $\mu_2(m, g_1, g_2) = m g_1 g_2$ and 
$\phi_i = \phi \circ d_i$. We have $\phi_1 = \phi_2 \phi_0$ so conjugating line \eqref{eq:lastline}
by $\phi_0^{-1}$ we obtain 
$$
\phi_0^{-1}\mu_1^{-1}(\nabla)\phi_0 - \phi_1^{-1} \mu_0^{-1}(\nabla) \phi_1 = \delta_Y(d_2^*(\beta)),
$$
and an alternating sum gives us $\delta_Y(\delta(\beta)) = 0$. Hence 
 \begin{equation}
 \label{eq:omega12}
  \delta(\beta) = \pi^*(\omega_{(1, 2)}),
  \end{equation}
   for some $\omega_{(1, 2)} \in \Omega^1(M \times G^2)$.
It then follows that 
$$
\pi^*(\delta(\omega_{(2, 1)})) = - d \delta(\beta) = -\pi^*(d\omega_{(1, 2)}),
$$
or 
$$
d \omega_{(1, 2)} + \delta(\omega_{(2, 1)}) = 0.
$$
Notice also that $0 = \delta^2(\beta) = - \pi^*(\delta(\omega_{(1, 2)}))$ so that 
$\delta(\omega_{(1, 2)}) = 0 = d 0 $.
Thus we have defined a cocycle 
$$
\omega = ( 0, \omega_{(1, 2)}, \omega_{(2, 1)}, \omega_{(3, 0)}) \in \cA^3(EG(M)_\bullet).  
$$

Consider what happens if we vary the choices involved. We could replace $\beta$ by $\beta + \pi^*(\rho_{(1, 1)})$, changing $\omega_{(2, 1)}$ by adding $- d \rho_{(1, 1)}$
and  $\omega_{(1, 2)}$ by adding $\d (\rho_{(1, 1)})$, which leaves the class of $\omega$ unchanged. Also we could replace the curving $f$ by adding $\pi^*(\rho_{(2, 0)})$ to 
it and changing $\omega_{(3, 0)}$ by addition of $d \rho_{(2, 0)}$, and $\omega_{(2, 1)}$ by addition of $\delta (\rho_{(2, 0)} )$, which again leaves the class of $\omega$ unchanged. 
Finally, we can change $(\nabla, f)$ to $(\nabla + \delta(\alpha), f + d\alpha)$ for $\alpha \in \Omega^1(Y)$, which changes $\beta$ to $\beta + \delta(\a)$. The left hand side of equation \eqref{eq:omega21} then changes by the addition of $\delta(d \a) - d \delta(\a) = 0$, leaving  $\omega$  unchanged.  We conclude that the class of $\omega$ depends only on the strong group action and the bundle gerbe.

\begin{definition}
We call the class just defined the {\em strongly equivariant class} of the strongly equivariant bundle gerbe $\cG$ and denote it by $\sec(\cG)$.
\end{definition}

\noindent
In \cite{Mei} Meinrenken defines the  class of a strongly equivariant  bundle gerbe using the Cartan model of equivariant cohomology. See also related work of Stienon \cite{Sti}.

With these observations we can prove the following result:

\begin{proposition}
The equivariant class of a strongly equivariant gerbe is equal to the 
simplicial extension class of the corresponding simplicial extension.
\end{proposition}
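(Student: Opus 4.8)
The plan is to compute both cocycles from the \emph{same} choice of connective data and the \emph{same} representative of the weak action, and then to match them component by component. Since the strongly equivariant class $\sec(\cG)$ is independent of the choices of $(\nabla,f)$ and $\beta$, and since the extension class $\ec(T,s)$ is independent of $Y_\bullet$ and of the connective data used to represent it, I am free to fix one bundle gerbe connection $\nabla$ and curving $f$ on $\cG$ and to represent the induced weak action by the explicit simplicial extension $(EG(Y)_\bullet,(Y\times G)\times U(1),1)$ constructed in Proposition \ref{prop:strong2weak}. (By Proposition \ref{prop:there exists a mu extension} this has the same extension class as its companion over $\mu^{-1}(Y^{\bullet+1})$, so no generality is lost.) With these choices the top component of the extension cocycle of Proposition \ref{prop:simplicial-class} is, by definition, the three-curvature of $(\nabla,f)$, which is exactly $\omega_{(3,0)}$; thus the $(3,0)$-components agree at once.

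The heart of the argument is to pin down the trivialisation connection $\nabla_T$ on the trivial bundle $T=(Y\times G)\times U(1)$. I claim that $\nabla_T=d+\beta$ is an admissible choice, where $\beta\in\Omega^1(Y\times G)$ is the form of \eqref{eq:beta}. To see this I would unwind the trivialisation morphism $\psi\colon \delta_{Y_1}(T)\to \delta(P)$ produced in Proposition \ref{prop:strong2weak}: it is built from the map $\phi$ given by right multiplication by $g^{-1}$, which identifies $\delta(P)_{(y_1,y_2,g)}=P_{(y_1g,y_2g)}\otimes P^*_{(y_1,y_2)}$ with $U(1)$. Transporting the bundle gerbe connection $\delta(\nabla)=d_0^{-1}(\nabla)-d_1^{-1}(\nabla)$ through $\psi$ and comparing it with the flat connection on the trivial bundle, the resulting discrepancy one-form on $Y_1^{[2]}$ is precisely $d_0^{-1}(\nabla)-\phi^{-1}d_1^{-1}(\nabla)\phi=\delta_Y(\beta)$. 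Hence $\delta_{Y_1}(\nabla_T)=d+\delta_Y(\beta)=\psi^*(\delta(\nabla))$, which is exactly the condition $\delta_{Y_1}(\nabla_T)=\delta(\nabla)$ required in the construction of the extension class. This identification---matching the abstract trivialisation connection of the simplicial construction with the explicit difference form $\beta$ of the equivariant construction---is where the two definitions meet, and I expect it to be the main obstacle, since it demands care with the conventions defining $\phi$, $\psi$, and the two distinct $\delta$-operations (the simplicial face-map $\delta$ and the submersion $\delta_Y$).

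With $\nabla_T=d+\beta$ the remaining two components follow by direct substitution. Its curvature is $F_T=d\beta$, so equation \eqref{eq:eta21} gives $\pi^*(\eta_{(2,1)})=-F_T+\delta(f)=(d_0^*f-d_1^*f)-d\beta$, which is exactly $\pi^*(\omega_{(2,1)})$ by \eqref{eq:omega21}; thus $\eta_{(2,1)}=\omega_{(2,1)}$. For the last component, $\delta(\nabla_T)=d+\delta(\beta)$ on the trivial bundle $\delta(T)$ descends to the connection $\nabla_{A_T}$ on $A_T$, and evaluating along the canonical section $s=1$ as in \eqref{eq:eta12} reads off $\eta_{(1,2)}$ as the descent of $\delta(\beta)$ to $X_2$, which is $\omega_{(1,2)}$ by \eqref{eq:omega12}; thus $\eta_{(1,2)}=\omega_{(1,2)}$. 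Therefore the representatives $\eta=(0,\eta_{(1,2)},\eta_{(2,1)},\eta_{(3,0)})$ and $\omega=(0,\omega_{(1,2)},\omega_{(2,1)},\omega_{(3,0)})$ coincide as cocycles in $\cA^3(EG(M)_\bullet)$, and in particular define the same class in $H^3(EG(M)_\bullet,\RR)$, as claimed.
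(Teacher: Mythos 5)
Your proposal is correct and follows essentially the same route as the paper's proof: identify the simplicial extension of Proposition \ref{prop:strong2weak}, choose the trivialising connection on the trivial bundle $T$ to be (built from) $\beta$ from \eqref{eq:beta}, and then match \eqref{eq:omega21} with \eqref{eq:eta21} and \eqref{eq:omega12} with \eqref{eq:eta12} component by component. Your version merely spells out in more detail why $\nabla_T = d+\beta$ is an admissible choice, which the paper asserts more briefly.
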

\begin{proof}
In Proposition \ref{prop:simplicial-class} we constructed the class $\eta = (0, \eta_{(1, 2)} , \eta_{(2, 1)}, \eta_{(3, 0)} ) $, which we compare to the class $\omega = ( 0, \omega_{(1, 2)}, \omega_{(2, 1)}, \omega_{(3, 0)})$ above. 

Firstly, it is clear that $\omega_{(3, 0)} = \eta_{(3, 0)}$. 

Recall from Proposition \ref{prop:strong2weak} that the simplicial extension corresponding to the strong action of $G$ on $\cG$ is given by $(EG(Y)_\bullet, T, s)$, where $T$ (and hence $A_T$) is the trivial bundle and $s$ is the canonical section of $A_T$. Equation (\ref{eq:beta}) tells us we can choose the trivialising connection on $T$ to be $\beta$ and then equation (\ref{eq:omega21}) is the same as equation (\ref{eq:eta21}) and hence $\eta_{(2, 1)} = \omega_{(2, 1)}$. 

Finally, the induced connection on $\d(T)$ is given by $\d(\b)$ and hence comparing equation (\ref{eq:omega12}) with equation (\ref{eq:eta12}) implies that $\eta_{(1, 2)} = \omega_{(1, 2)}$. 
\end{proof}

\section{The basic bundle gerbe}\label{S:basic}

We review the constructions in \cite{MurSte} and situate them in the equivariant setting. 
We first recall from \cite{MurSte} the {\em basic bundle gerbe} on $G= U(n)$ and the canonical connection and curving on it constructed using the holomorphic functional calculus.  

Write $Z=S^1\setminus \{1\}$. Define 
$Y\subset Z\times G$ to be the set of pairs $(z,g)$, where $z$ is not an eigenvalue of $g$.  
We equip $Z$ with an ordering via the identification of $Z$ with the open interval 
$(0,2\pi)$ by $\phi\mapsto \exp(i\phi)$.  Let $\pi\colon Y\to G$ denote the canonical map.  We note
that elements of $Y^{[2]}$ can be identified with triples $(z_1, z_2, g)$ where $(z_1, g), (z_2, g) \in Y$. 
In such a case if  $z \in Z$ we say that it is  {\em between} $z_1$ and $z_2$ if it is in the component of $S^1 \setminus \{z_1, z_2\}$
not containing $\{1\}$. 

As described in \cite{MurSte}, there is a natural line bundle $L$ on $Y^{[2]}$ 
together with a bundle gerbe product on $L$, giving $(L,Y)$ the structure of a 
bundle gerbe on $G$.  To describe this note first that there is a decomposition of $Y^{[2]}$ as a union of three disjoint open sets defined by:
 \begin{align*}
Y_+^{[2]} & = \{  (z_1, z_2, g) \mid  z_1 < z_2 \text{ and there is some eigenvalue of $g$ between $z_1$ and $z_2$}\}\\
Y_-^{[2]} & = \{  (z_1, z_2, g) \mid  z_1 > z_2 \text{ and there is some eigenvalue of $g$ between $z_1$ and $z_2$}\}\\
\intertext{and}
Y^{[2]}_0  &= \{ (z_1, z_2, g) \mid \text{there is no eigenvalue of $g$ between $z_1$ and $z_2$} \}.
\end{align*}

If $(z_1,z_2,g)\in Y_+^{[2]}$ we define 
\[
E_{(z_1,z_2,g)} = \bigoplus_{z_1<\lambda<z_2}E_{\lambda}(g), 
\]
where $E_\lambda(g)$ denotes the $\lambda$-eigenspace of $g$ and we write 
$z_1<\lambda<z_2$ to indicate that $\lambda$ is between $z_1$ and $z_2$.  It is shown in \cite{MurSte} that $E\to Y_+^{[2]}$ 
is a smooth, locally trivial vector bundle.  Recall also from \cite{MurSte} that the 
orthogonal projection $P\colon Y_+^{[2]}\to M_n(\CC)$ onto $E$ is given by the 
contour integral formula 
\begin{equation} 
\label{eq:contour integral formula for P}
P(z_1,z_2,g) = \frac{1}{2\pi i}\oint_{C_{(z_1,z_2,g)}}(\xi 1 -g)^{-1}d\xi 
\end{equation}
where $C_{(z_1,z_2,g)}$ is an anti-clockwise curve enclosing all of the eigenvalues 
of $g$ between $z_1$ and $z_2$.

The line bundle $L\to Y^{[2]}$ is defined as follows.  If $(z_1,z_2,g)\in Y_+^{[2]}$ 
we set 
\[
L_{(z_1,z_2,g)} = \bigwedge^{\mathrm{top}} E_{(z_1,z_2,g)} .
\]
If $(z_1,z_2,g)\in Y_-^{[2]}$ we set $L_{(z_1,z_2,g)} = L_{(z_2,z_1,g)}^*$.  
If $(z,z,g)\in Y_0^{[2]}$ we set $L_{(z,z,g)} = \CC$.  It is proven in \cite[Proposition 3.1]{MurSte}  that $L\to Y^{[2]}$ is a smooth, locally trivial, hermitian line bundle.  
Furthermore it is shown in \cite{MurSte} that there is a natural bundle gerbe 
product on $L$, equipping $(L,Y)$ with the structure of a bundle gerbe.  The resulting 
bundle gerbe $\cB_n = (L,Y)$ is a model for the basic bundle gerbe on $G = U(n)$.      

Observe that $G$ acts smoothly on $Y$ from the right, covering the adjoint action of $G$ 
on itself.  More precisely, we define $Y\times G\to Y$ by $((z, g),h)\mapsto 
(z, h^{-1}gh)$; note that the projection map $\pi\colon Y\to G$ is equivariant.  
We have the following lemma.  
 
\begin{lemma} 
\label{lem:basic bg is strongly equiv}
The basic bundle gerbe $\cB_n = (L,Y)$ on $G = U(n)$ is a strongly equivariant bundle 
gerbe for the adjoint action of $G$ on itself.  
\end{lemma}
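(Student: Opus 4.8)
The plan is to lift the conjugation action of $G=U(n)$ on $Y$ to $L\to Y^{[2]}$ using the standard linear action of $G$ on $\CC^n$ and the functoriality of the top exterior power. The starting point is the elementary observation that conjugation permutes eigenspaces without moving eigenvalues: if $gv=\lambda v$ then $(h^{-1}gh)(h^{-1}v)=\lambda(h^{-1}v)$, so $E_\lambda(h^{-1}gh)=h^{-1}E_\lambda(g)$ and in particular $g$ and $h^{-1}gh$ have the same spectrum. Since the sets $Y_+^{[2]}$, $Y_-^{[2]}$, $Y_0^{[2]}$ are defined purely in terms of the fixed pair $z_1,z_2$ and the location of the eigenvalues between them, the induced action $(z_1,z_2,g)\mapsto(z_1,z_2,h^{-1}gh)$ on $Y^{[2]}$ preserves this decomposition. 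First I would use this to define the lift: over $Y_+^{[2]}$ the map $v\mapsto h^{-1}v$ restricts to a linear isomorphism $E_{(z_1,z_2,g)}\to E_{(z_1,z_2,h^{-1}gh)}$, carrying each summand $E_\lambda(g)$ to $E_\lambda(h^{-1}gh)$ with the index set $z_1<\lambda<z_2$ unchanged, and I would set $\phi_h$ to be the induced map $\bigwedge^{\mathrm{top}}E_{(z_1,z_2,g)}\to\bigwedge^{\mathrm{top}}E_{(z_1,z_2,h^{-1}gh)}$ on $L$. On $Y_-^{[2]}$ I would define $\phi_h$ by duality using $L_{(z_1,z_2,g)}=L_{(z_2,z_1,g)}^*$, and on $Y_0^{[2]}$ by the identity on $\CC$.

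Next I would verify that this is a genuine smooth right action. Smoothness follows from the smoothness of the eigenbundle $E\to Y_+^{[2]}$ established in \cite{MurSte}, together with the conjugation-equivariance of the orthogonal projection of \eqref{eq:contour integral formula for P}: since $(\xi 1-h^{-1}gh)^{-1}=h^{-1}(\xi 1-g)^{-1}h$, one reads off $P(z_1,z_2,h^{-1}gh)=h^{-1}P(z_1,z_2,g)h$, so $v\mapsto h^{-1}v$ carries the smooth family $E$ into itself smoothly and the induced map on $\bigwedge^{\mathrm{top}}E$ is smooth; the $Y_-^{[2]}$ and $Y_0^{[2]}$ cases are then immediate. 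The homomorphism property is purely formal: composing the lifts realises the linear map $(hh')^{-1}=h'^{-1}h^{-1}$ on $\CC^n$, which is exactly the map associated to $hh'$, so $\phi_h\phi_{h'}=\phi_{hh'}$ in the right-action convention of the introduction, with $\phi_e=\id$.

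The substantive step is compatibility with the bundle gerbe product. I would recall from \cite{MurSte} that the product on $L$ over $Y^{[3]}$ is induced, whenever one of $z_1,z_2,z_3$ lies between the other two, by the orthogonal direct-sum decomposition of the relevant eigenbundle, so that $\bigwedge^{\mathrm{top}}$ of a direct sum factors as a tensor product, with the $Y_-^{[2]}$ and $Y_0^{[2]}$ cases handled by the corresponding dualities and trivialisations. Because $v\mapsto h^{-1}v$ is a single linear map whose restriction respects these complementary eigen-subspaces, the induced maps on top exterior powers are multiplicative with respect to the decomposition (the determinant of a block-diagonal map being the product of the blocks' determinants), and hence $\phi_h$ intertwines the product.

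I expect the main obstacle to be the bookkeeping in this last step: the fibres of $L$ and the gerbe product are defined by cases according to the cyclic order of $z_1,z_2,z_3$ on $Z$ and the positions of the eigenvalues, so verifying the intertwining requires running through these cases while tracking the duals introduced on $Y_-^{[2]}$. Conceptually, however, every case is governed by the same principle---the naturality of the functor $\bigwedge^{\mathrm{top}}$ under the single linear isomorphism $v\mapsto h^{-1}v$ of $\CC^n$, which commutes with direct sums and duals and hence with all the structure maps of $\cB_n$---so the verification reduces to routine linear algebra.
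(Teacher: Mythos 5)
Your proposal is correct and follows essentially the same route as the paper, which simply observes that the linear action of $G$ on $\CC^n$ is smooth and sends $\lambda$-eigenvectors of $g$ to $\lambda$-eigenvectors of $h^{-1}gh$, so that the action lifts to $E$ over $Y_+^{[2]}$ and hence to $L$. Your additional details (equivariance of the projection $P$, the duality on $Y_-^{[2]}$, and multiplicativity of $\bigwedge^{\mathrm{top}}$ under direct sums) are exactly the bookkeeping the paper leaves implicit.
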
 

\begin{proof} 
We need to show that the induced action $ Y^{[2]}\times G\to Y^{[2]}$ lifts to 
an action of $G$ on $L$.  It is sufficient to prove that the action of $G$ on $Y_+^{[2]}$ 
lifts to an action of $G$ on $E$; this follows from the fact that the left action 
of $G$ on $\CC^n$ is smooth and the fact that  
if $v$ is an eigenvector of $g$ with eigenvalue $\lambda$, 
then $vh$ is an eigenvector of $h^{-1}gh$ with eigenvalue $\lambda$.      
\end{proof} 

The map $P\colon Y_+^{[2]}\to M_n(\CC)$ extends in an obvious way to 
a smooth map $P\colon Y^{[2]}\to M_n(\CC)$.  Observe that $P$ satisfies 
\begin{equation}
\label{eq:equiv for P}
d_1^*P = \Ad_{p_2}d_0^*P 
\end{equation}
on $ Y^{[2]}\times G$, 
where $p_2\colon  Y^{[2]} \times G\to G$ is the map $p_2((z_1,z_2, g),h) = h$.

Recall from \cite{MurSte} that there is a canonical bundle gerbe connection $\nabla$ and curving $f$ on 
$(L,Y)$ whose 3-curvature is equal to the basic 3-form 
\[
\nu = -\frac{1}{24\pi^2} \tr(g^{-1}dg)^3.  
\]
We briefly review the construction of $\nabla$ and $f$ as they will be needed in the 
sequel.  The orthogonal projection $P\colon Y_+^{[2]}\to M_n(\CC)$ induces 
a connection $\nabla_E$ on $E$ by projecting the trivial connection $d$ on 
$Y_+^{[2]}\times \CC^n$ to $E$.  The connection $\nabla_E$ then induces 
a connection $\nabla$ on the restriction of $L$ to $Y_+^{[2]}$, over $Y_-^{[2]}$ 
we equip $L$ with the dual connection and over $Y_0^{[2]}$ we take the 
flat connection.   It is proven in \cite{MurSte} that this connection $\nabla$ 
on $L$ is a bundle gerbe connection and that moreover a curving 
$f$ for $\nabla$ is given by the 2-form on $Y$ defined by 
\[
f(g,z) = \frac{1}{8\pi^2}\oint_{C_{(g,z)}}\log_z \xi \tr((\xi 1 -g)^{-1}dg(\xi 1 - g)^{-2}dg)d\xi 
\]
where $C_{(g,z)}$ is an anti-clockwise contour in $\CC\setminus R_z$ enclosing the 
eigenvalues of $g$, and where $R_z$ denotes the closed ray from the origin in 
$\CC$ through $z$.  Here $\log_z\colon \CC\setminus R_z\to \CC$ is the branch 
of the logarithm defined by making the cut along $R_z$ and defining $\log_z(1) = 0$.     

The connection $\nabla$ on $L$ is not equivariant however, for the action 
of $G$ on $L$ described in Lemma~\ref{lem:basic bg is strongly equiv} above.  
We investigate the failure of $\nabla$ to be equivariant more closely.  We have 
an isomorphism of line bundles $\phi\colon d_0^*L\to d_1^*L$ over $ Y^{[2]}\times G$; 
if $s$ is a section of $d_0^*L$ over $ Y^{[2]} \times G$ then $\phi(s)$ is the section 
of $d_1^*L$ over $Y^{[2]} \times G$ defined by 
\[
\phi(s) = s\cdot p_2, 
\]
where $(s\cdot p_2)((z_1,z_2, g), h) = s(z_1,z_2, g)\cdot h$.  We then 
have 
\begin{align*} 
\phi^{-1}d_0^*\nabla (\phi(s)) & = \phi^{-1}\det(d_0^*P(d(s\cdot p_2))) \\ 
& = \phi^{-1}\det(d_0^*P(ds\cdot p_2 + s\cdot dp_2)) \\ 
& = \phi^{-1}\det(\Ad_{p_2}d_0^*P(ds\cdot p_2 + s\cdot dp_2)) \\ 
& = \tr(p_2^*\theta d_1^*P) \cdot s + d_1^*\nabla s ,
\end{align*} 
where we have used~\eqref{eq:equiv for P} and where we have written $\theta$ 
for the right Maurer-Cartan 1-form on $G$.  Using~\eqref{eq:contour integral formula for P} 
we may express the 1-form $\alpha = \tr(p_2^*\theta d_1^*P)$ as a contour integral: 
\[
\alpha(z_1,z_2, g, h) = \frac{1}{2\pi i}\oint_{C_{(g,z_1,z_2)}}\tr(\theta(h)(\xi 1 - g)^{-1})d\xi, 
\]
where, as in~\eqref{eq:contour integral formula for P} above, $C_{(g,z_1,z_2)}$ denotes 
a contour enclosing the eigenvalues of $g$ between $z_1$ and $z_2$, oriented counter-clockwise.  

Since $d_0^*\nabla$ and $d_1^*\nabla$ are bundle gerbe connections, it follows that 
$\delta(\alpha) = 0$ and hence $\alpha = \delta(\beta)$ for some 1-form $\beta$ on 
$Y\times G$.  Using an identical argument to that used in the proof of 
part (a) of Theorem 5.1 in \cite{MurSte}, we obtain the following expression for 
$\beta$: 
\begin{equation} 
\label{eq:eqn for beta}
\beta(z,g,h) = -\frac{1}{4\pi^2}\oint_{C_{(g,z)}}\log_z \xi \tr(\theta(h)(\xi 1 - g)^{-1})d\xi 
\end{equation}
where $\log_z$ and $C_{(g,z)}$ are respectively the branch of the logarithm and the contour described above.  

The main result of this section is the following theorem.  

\begin{theorem} \label{thm:equivariant gerbe on U(n)}
Let $G = U(n)$.  Then the strongly equivariant bundle gerbe $(L,Y)$ has an 
equivariant bundle gerbe connection and curving given by $(\nabla, f,\beta)$, where 
$\nabla$ is the bundle gerbe connection on $L$ described above, $f$ is the 
curving 2-form on $Y$ given by 
\[
f(z, g) = \frac{1}{8\pi^2}\oint_{C_{(g,z)}}\log_z \xi \tr((\xi 1 -g)^{-1}dg(\xi 1 - g)^{-2}dg)d\xi 
\]
and $\beta$ is the 1-form on $Y\times G$ given by 
\[
\beta(z,g,h) = -\frac{1}{4\pi^2}\oint_{C_{(g,z)}}\log_z \xi \tr(\theta(h)(\xi 1 - g)^{-1})d\xi 
\]
where the contour $C_{(g,z)}$ and the branch of the logarithm are described as above.  
Furthermore, the equivariant 3-curvature of this connection and curving is the cocycle 
$(\nu,\omega, 0, 0)\in \mathcal{A}^3(EG(G)_\bullet)$ where 
\begin{align*} 
\nu  & = -\frac{1}{24\pi^2} \tr(g^{-1}dg)^3 \in \Omega^3(G)\\ 
\omega & = \frac{i}{4\pi} \left(\tr(\hat{\theta}_h\theta_h) + 
\tr(\theta \theta_h) + \tr(\theta \hat{\theta}_h)\right) \in \Omega^2(G^2) 
\end{align*}
where we have defined $\theta = g^{-1}dg$, $\theta_h = dhh^{-1}$ and 
$\hat{\theta}_h = g^{-1}\theta_hg$.  Hence the strongly equivariant class of $\cB_n$ is $\sec(\cB_n) = [\nu,\omega, 0, 0]$.

\end{theorem}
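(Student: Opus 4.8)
The data $(\nabla,f,\beta)$ is already in hand from the discussion preceding the statement: $\nabla$ and $f$ are the bundle gerbe connection and curving of \cite{MurSte}, whose ordinary three-curvature is $\nu=-\tfrac{1}{24\pi^2}\tr(g^{-1}dg)^3$, and $\beta$ is the $1$-form \eqref{eq:eqn for beta} produced by solving $\delta_Y(\beta)=\alpha$ for $\alpha=\tr(p_2^*\theta\,d_1^*P)$, the obstruction to $\nabla$ being $G$-invariant. Hence $(\nabla,f,\beta)$ satisfies \eqref{eq:beta} and is an equivariant connection and curving, which is the first claim. The equivariant three-curvature cocycle of a strongly equivariant gerbe has components $\omega_{(3,0)},\omega_{(2,1)},\omega_{(1,2)}$ fixed by \eqref{eq:omega21} and \eqref{eq:omega12}, with $\omega_{(3,0)}$ equal to the ordinary three-curvature $\nu$. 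The plan is thus to show $\omega_{(1,2)}=0$ and $\omega_{(2,1)}=\omega$, whence $\sec(\cB_n)=[\nu,\omega,0,0]$.

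First I would treat $\omega_{(1,2)}$, which by \eqref{eq:omega12} is the descent of $\delta(\beta)=d_0^*\beta-d_1^*\beta+d_2^*\beta$ over $Y\times G^2$. Evaluating the three face maps of $EG(Y)_\bullet$ on the explicit $\beta$ uses only that the right Maurer--Cartan form satisfies $\theta(h_1h_2)=\theta(h_1)+\Ad_{h_1}\theta(h_2)$ and that conjugation $g\mapsto h_1^{-1}gh_1$ fixes the spectrum of $g$, hence the contour $C_{(g,z)}$, while transforming the resolvent by $(\xi 1-h_1^{-1}gh_1)^{-1}=h_1^{-1}(\xi 1-g)^{-1}h_1$. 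After substituting and using cyclicity of the trace, the $\Ad_{h_1}\theta(h_2)$ part of $d_1^*\beta$ is cancelled by $d_0^*\beta$ and the $\theta(h_1)$ part by $d_2^*\beta$, so $\delta(\beta)=0$ identically and $\omega_{(1,2)}=0$.

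The substance of the theorem is $\omega_{(2,1)}$, which by \eqref{eq:omega21} is the descent of $d_0^*f-d_1^*f-d\beta$ along $Y\times G\to G\times G$. Writing $\tilde g=h^{-1}gh$ one computes $d\tilde g=h^{-1}(dg+[g,\theta_h])h$ and $(\xi 1-\tilde g)^{-1}=h^{-1}(\xi 1-g)^{-1}h$, so after cancelling the conjugating factors under the trace the integrand of $d_0^*f$ is that of $d_1^*f=f$ with $dg$ replaced by $dg+[g,\theta_h]$; expanding and subtracting leaves two mixed $dg\wedge\theta_h$ terms and one quadratic $\theta_h\wedge\theta_h$ term, all under $\oint_{C_{(g,z)}}\log_z\xi\,\tr(\,\cdot\,)\,d\xi$. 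Next I would compute $d\beta$ from \eqref{eq:eqn for beta} by differentiating under the integral, using $d\theta_h=\theta_h\wedge\theta_h$ and $d(\xi 1-g)^{-1}=(\xi 1-g)^{-1}dg(\xi 1-g)^{-1}$. The main obstacle is the ensuing contour analysis: one must integrate by parts in $\xi$, using $(\xi 1-g)^{-2}=-\partial_\xi(\xi 1-g)^{-1}$, and apply the residue manipulations from the proof of Theorem~5.1 of \cite{MurSte} to evaluate the logarithmic integrals, then verify that every term carrying $dz$ or depending on the fibre of $Y\to G$ cancels, so that what remains descends to a genuine $2$-form on $G\times G$. Re-expressing the surviving contributions in terms of $\theta=g^{-1}dg$, $\theta_h=dh\,h^{-1}$ and $\hat\theta_h=g^{-1}\theta_h g$ should then yield
\[
\omega=\frac{i}{4\pi}\bigl(\tr(\hat\theta_h\theta_h)+\tr(\theta\theta_h)+\tr(\theta\hat\theta_h)\bigr).
\]

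Finally, as a check on signs and on the normalising constant I would verify directly that $(\nu,\omega,0,0)$ is a cocycle in $\cA^3(EG(G)_\bullet)$, i.e.\ that $\delta(\omega)=0$ and $d\omega=\delta(\nu)$; these relations are forced by the total differential, and their verification both confirms the constant $\tfrac{i}{4\pi}$ and catches any error in the contour computation. The concluding statement $\sec(\cB_n)=[\nu,\omega,0,0]$ is then immediate, being by definition the class of the equivariant three-curvature cocycle just computed.
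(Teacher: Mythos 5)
Your overall skeleton is the paper's: the three identities $\delta(\nabla)=\delta_Y(\beta)$, $\delta(\beta)=0$ and $\delta(f)-d\beta=\pi^*\omega$ are exactly what the paper verifies, the first being already established in the preceding discussion, and your argument for $\delta(\beta)=0$ (the cocycle identity $\theta(hk)=h\theta(k)h^{-1}+\theta(h)$ for the right Maurer--Cartan form together with conjugation of the resolvent) is precisely the paper's computation. You also correctly identify that the whole content of the theorem sits in the single identity $\delta(f)-d\beta=\pi^*\omega$.

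It is at that identity that your proposal diverges from the paper and, as written, stops short of a proof. You propose to manipulate the contour integrals directly on $Y\times G$: expand $d_0^*f$ via $(\xi 1-h^{-1}gh)^{-1}=h^{-1}(\xi 1-g)^{-1}h$, differentiate $\beta$ under the integral, integrate by parts in $\xi$, and ``apply the residue manipulations'' of \cite{MurSte} until everything descends to $G\times G$. The paper does something structurally different: it pulls the whole identity back along the surjection $p_Y\colon G/T\times Y_T\to Y$ (with $T$ the diagonal torus), uses the injectivity of $p_Y^*$ on forms (Lemma 6.3 of \cite{MurSte}) to reduce to an identity on $G/T\times Y_T$, and there replaces all contour integrals by finite sums over eigenvalues $\lambda_i$ and spectral projections $P_i$, e.g.\ $p_Y^*(f)=\tfrac{i}{4\pi}\sum_{i\neq k}\bigl(\log_z\lambda_i-\log_z\lambda_k+(\lambda_k-\lambda_i)\lambda_k^{-1}\bigr)\tr(P_i\,dP_k\,dP_k)$. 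The entire appendix is then elementary algebra with $\sum_iP_i=1$, $\sum_i dP_i=0$ and $\tr(P_i\theta_hP_i\theta_h)=0$; no residue analysis is needed, and the logarithm terms visibly cancel, which is exactly the cancellation you defer to ``verify that every term carrying $dz$ or depending on the fibre of $Y\to G$ cancels.'' Your direct route is not obviously wrong, but the step where you would extract $\omega$ from the logarithmic contour integrals is precisely where all the difficulty lives, and your proposal offers only the phrase ``should then yield'' in its place. If you want to complete the argument along the paper's lines, the key device to import is the injectivity of $p_Y^*$ and the passage to eigenvalue coordinates; your closing consistency check that $(\nu,\omega,0,0)$ is a $D$-cocycle is a useful sanity test of signs and the constant $\tfrac{i}{4\pi}$, but it cannot substitute for the computation, since many non-equal $2$-forms on $G^2$ satisfy $d\omega=\delta(\nu)$ and $\delta(\omega)=0$.
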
   

\begin{proof} 
We need to show that the following equations hold: 
\begin{align*} 
& \delta(\nabla) = \delta(\beta)\\
& \delta(f) - d\beta = \pi^*\omega \\    
& \delta(\beta) = 0 .
\end{align*} 
We have established the first equation in the paragraphs preceding the 
statement of the theorem.  We show that the third equation is satisfied, i.e.\ 
that $\delta(\beta) =0$.  We have $d_0^*\beta((z,g),h,k) = \beta((z,h^{-1}gh),k)$, $d_1^*\beta((z, g),h,k) = 
\beta((z,g),hk)$ and $d_2^*\beta((z,g),h,k) = \beta((z,g),h)$.  Therefore we have that 
$\delta(\beta)(z, g, h, k)$ is equal to 
\[
 \frac{1}{4\pi^2}\oint_{C_{(g,z)}} 
\log_z\xi \tr\left[\theta(k)h^{-1}(\xi1 -g)^{-1}h - \theta(kh)(\xi 1 -g)^{-1} + \theta(h)(\xi 1 -g)^{-1}\right] d\xi ,\\ 
\]
which is easily seen to equal 0  
using $\theta(hk) = h\theta(k)h^{-1} + \theta(h)$.  The proof that the second equation is 
satisfied is long and technical and we have therefore relegated it to Appendix \ref{app:long calc for theorem 5.2}.   
\end{proof}

As an illustration of this theorem we consider the case where $G = U(1)$ in detail.  In 
this case the bundle gerbe on $G$ is necessarily trivial.  However, the equivariant bundle 
gerbe on $G$ is non-trivial.  The theorem above shows that its equivariant 3-curvature 
is given by $(0,\omega_{(2,1)})$, where $\omega_{(2,1)}$ is the closed 2-form on $U(1)\times U(1)$ given by 
\[
\omega_{(2, 1)}(\phi_1, \phi_2) = \frac{1}{4 \pi^2} \  d\phi_1 \wedge d\phi_2,
\]
where we have defined $g_1 = \exp(i\phi_1)$ and $g_2 = \exp(i \phi_2)$.
An easy calculation shows that the class in $H^2(U(1)\times U(1), \RR)$ represented 
by $\omega_{(2,1)}$ is non-zero.  It follows that the class in $H^3_{U(1)}(U(1), \RR)$ represented 
by $(0,\omega_{(2,1)})$ is non-zero.  

Note that there also is a non-trivial \emph{multiplicative} bundle gerbe on $U(1)$, with trivial underlying bundle gerbe, using the line bundle on $U(1) \times U(1)$ with curvature $\omega_{(2, 1)}$ \cite{Ganter}.

\section{String structures and simplicial extensions}\label{S:string}
 
Waldorf \cite{Wal} has described string structures on a principal $G$-bundle $P \to X$ as trivialisations of a certain bundle 2-gerbe, called the \emph{Chern--Simons bundle 2-gerbe of $P$}. In this section we show that such a trivialisation gives rise to a simplicial extension of a bundle gerbe. Unlike the examples so far, this is an example of a simplicial extension over a simplicial manifold that is \emph{not} the nerve of a Lie groupoid.

\subsection{Crossed modules}\label{SS:xm}

We shall begin by recording some relevant facts about crossed modules, which will be important in what follows. Crossed modules were introduced by Whitehead in the 1940's as a model for homotopy 2-types. We first recall the definition.

\begin{definition}\label{D:crossed module}
A crossed module $\cK$ is a pair of groups $\hat{K} $ and $L$ together with a homomorphism $\hat{K} \xrightarrow{t} L$ and an action $L \times \hat{K} \xrightarrow{\a} \hat{K}$ by group automorphisms satisfying 
\begin{enumerate}
\item $t(\a(l, k)) = \Ad_l (t(k))$,
\item $\a(t(k_1), k_2) = \Ad_{k_1}(k_2)$,
\end{enumerate}
for $l \in L$ and $k, k_1, k_2 \in \hat{K}$. We shall further assume that $\hat K \to K$ is a locally trivial principal $\ker t$-bundle, where here $K := t(\hat{K})$.
\end{definition}

\begin{remark}
Although we do not need this point of view we remark that a crossed module gives rise to a groupoid $\xymatrix@1@C=3ex{\hat K \times L\, \ar@<0.6ex>[r] \ar@<-0.6ex>[r] & \, L}$ where both the objects and arrows are groups and the source and target are homomorphisms. Further, there are functors $1 \to \cK$ and $\cK \times \cK \to \cK$ making certain natural diagrams commute. Such a thing is the same as a group object in the category of groupoids, and is called a \emph{strict 2-group}. A detailed discussion of this would take us too far afield, so we instead refer the interested reader to history, discussion and definitions in \cite{BaezLauda}
\end{remark}

We also want to say what it means for a crossed module to act on a manifold.

\begin{definition}\label{D:crossed module action}
A \emph{strict action} of a crossed module $ \hat{K} \xrightarrow{t} L $ on a manifold $P$ is an ordinary group action of $L$ on $P$ such that the action restricted to $K < L$ is trivial. 
\end{definition}

\begin{remark}
Although Definition \ref{D:crossed module action} will suffice for our purposes, we remark that there is a definition of a strict action of a strict 2-group $\cK$ on a manifold $P$ given in terms of a functor $\cK \times P \to P$ (where $P$ is considered as a groupoid with no non-identity arrows) making certain diagrams commute. In the case that the 2-group $\cK$ comes from a crossed module $\hat K \xrightarrow{t} L$ it is easy to see that this is equivalent to Definition \ref{D:crossed module action}.
\end{remark}

\begin{example}\label{ex:string}
The crossed module in which we are interested is the following \cite{BSCS}: $\hat K$ is the central extension of the loop group $\widehat{\Omega G}$, and $L$ is the path group $PG$. The map $\widehat{\Omega G} \xrightarrow{t} PG$ is the composition of the projection to $\Omega G$ with the inclusion $\Omega G \hookrightarrow PG$ (so $K = t(\hat K) = \Omega G$ and $\ker t = U(1)$) and the map $\a \colon PG \times \widehat{\Omega G} \to \widehat{\Omega G}$ is a lift of the adjoint map $\Ad \colon PG \times \Omega G \to {\Omega G}$, which we also denote by $\Ad$. The result of \cite{BSCS} is that this defines a crossed module that gives a 2-group model for the 3-connected cover of $G$, the String group of $G$. 

Let $N$ be a manifold with a $G$-action. The crossed module in the previous paragraph acts on $N$ in a natural way via the evaluation map $PG \to G$.

This crossed module will be important in what follows since the simplicial manifold we consider in Section \ref{SS:simplicial extension} is built from the crossed module action on the total space of a $G$-bundle.
\end{example}

We have the following facts about $\hat K \xrightarrow{t} L$:
\begin{enumerate}
\item Since $\hat K$ is a central extension of $K$ it is multiplicative as a principal bundle, that is the following diagram is a pullback
\begin{equation}\label{E:mult}
\begin{aligned}
\xymatrix{
\hat K \hat\otimes \hat K \ar[r] \ar[d]	& \hat K\ar[d]\\
K \times K \ar[r]		& K
}
\end{aligned}
\end{equation}
where $\hat K \hat \otimes \hat K$ denotes the external tensor product, $\hat K_1 \otimes \hat K_2$, where $\hat K_i$ is the pullback of $\hat K$ by the projection onto the $i^{\text{th}}$ factor.\\
\item Since $\hat K \xrightarrow{t} L$ is a crossed module, the map $\a$ lifts the restriction to $K$ of the adjoint map, $\left.\Ad \right|_{K} \colon L \times K \to K$, so the following diagram is a pullback
\begin{equation}\label{E:Ad}
\begin{aligned}
\xymatrix{
L \times \hat K \ar^-{\a}[r] \ar_{\id \times t}[d]	& \hat K\ar[d]\\
L \times K \ar^-{\Ad}[r]		& K
}
\end{aligned}
\end{equation}\\
\item The natural map from the dual bundle $\hat K^*$ to $\hat K$ covers the inverse map on $K$ so that the following diagram is a pullback
\begin{equation}\label{E:inv}
\begin{aligned}
\xymatrix{
\hat K^* \ar[r] \ar[d]			&\hat  K\ar[d]\\
K \ar^-{\phantom{{}^{-1}}(\cdot)^{-1}}[r]	& K
}
\end{aligned}
\end{equation}
\end{enumerate}
In terms of the fibres of $\hat K$ these tell us there are canonical isomorphisms
\begin{enumerate}
\item $\hat K_{k_1 k_2} \simeq \hat K_{k_1} \otimes \hat K_{k_2}$,\\
\item $\hat K_{\Ad_l (k)} \simeq \hat K_k$,\\
\item $\hat K_{k^{-1}} \simeq \hat K^*_{k}$,
\end{enumerate}
where $l \in L$ and $k \in K$. 

We will be concerned with bundles over $L^n \times K^m$ defined by (products and compositions of) pullbacks of the maps above. We will call such a bundle an \xm-bundle. More precisely, we make the following definition.

\begin{definition}\label{D:xm}
Let $f \colon L^n \times K^m \to K$ be a map given by composition of the following operations:
\begin{enumerate}
\item multiplication in $L$ and $K$;
\item inversion in $L$ and $K$;
\item inclusions $K \hookrightarrow L$ and $1 \hookrightarrow K$;
\item projections $L^n \times K^m \to L^p \times K^q$;
\item diagonals $L \times K \to L^p \times K^q$;
\item the adjoint action $\Ad\colon L \times K \to K$.
\end{enumerate}
We call $P \to L^n \times K^m$ an \emph{\xm-bundle} if $P \simeq f^{-1}(\hat K)$ for some $f$ as above. Additionally, we define an \emph{\xm-morphism} between \xm-bundles on the same base to be a map of bundles built from compositions and products of the three structural maps of the crossed module  (\ref{E:mult}) -- (\ref{E:inv}) above. An \xm-morphisms is clearly an isomorphism, since maps of principal bundles are so.

\end{definition}

We have the following result, which we will use repeatedly.

\begin{lemma}\label{L:uniqueness}
There exists at most one \xm-morphism between any two \xm-bundles.
\end{lemma}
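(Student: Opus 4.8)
The plan is to prove uniqueness by reducing everything to the statement that the central extension $\hat{K} \to K$ has no nontrivial automorphisms as a principal $U(1)$-bundle that are built from the structural maps. The key observation is that an \xm-morphism between two \xm-bundles $P \simeq f^{-1}(\hat{K})$ and $P' \simeq g^{-1}(\hat{K})$ over the same base $L^n \times K^m$ is, by Definition \ref{D:xm}, a morphism of principal $U(1)$-bundles (here $\ker t = U(1)$). Any two such morphisms $\psi_1, \psi_2 \colon P \to P'$ differ by a smooth map $\lambda \colon L^n \times K^m \to U(1)$, since they cover the identity on the base. So it suffices to show that this difference $\lambda$ is forced to be the constant function $1$.

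First I would make precise the sense in which the two \xm-bundles $P$ and $P'$ must in fact have the \emph{same} underlying map to $K$. The point is that an \xm-morphism is built from the structural pullback squares (\ref{E:mult})--(\ref{E:inv}), each of which covers a specific \emph{identity} of maps into $K$: the multiplicativity square covers $k_1 k_2 = k_1 k_2$, the $\Ad$-square covers $\Ad_l(k)$, and the inversion square covers $k^{-1}$. Composing and tensoring these structural maps produces a morphism covering a genuine equality of the two defining maps $f = g \colon L^n \times K^m \to K$ as maps into the group $K$. Thus an \xm-morphism exists only when $f$ and $g$ agree, and in that case both bundles are canonically $f^{-1}(\hat{K})$, so the morphism is an automorphism of a single \xm-bundle covering the identity of the base.

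Next I would reduce the uniqueness of such an automorphism to a rigidity property of the structural maps themselves. Each structural map in (\ref{E:mult})--(\ref{E:inv}) is by construction the \emph{canonical} lift determined by the pullback (cartesian) property: given the map on the base and the requirement of being a bundle map, the pullback square admits a unique filler. Since every \xm-morphism is assembled by composition and external tensor product from these canonical pieces, and each piece is uniquely determined, the composite is uniquely determined as well. Concretely, I would argue by induction on the number of structural maps used in building the \xm-morphism: a morphism built from zero structural maps is the identity, and composing or tensoring with a uniquely-determined canonical structural map preserves uniqueness because the cartesian property of each square (\ref{E:mult})--(\ref{E:inv}) leaves no freedom once the base map is fixed. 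Hence any two \xm-morphisms $\psi_1, \psi_2$ decompose into the same sequence of canonical fillers and therefore coincide.

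The main obstacle I anticipate is the bookkeeping needed to make the induction genuinely well-founded: an \xm-morphism is not a single structural map but a composite built from products and compositions, and one must check that the cartesian (pullback) property of each of (\ref{E:mult})--(\ref{E:inv}) really does propagate through external tensor products and through pre- and post-composition without introducing any $U(1)$-valued ambiguity. In other words, the crux is verifying that the class of ``uniquely determined bundle maps covering a fixed base map'' is closed under the operations allowed in Definition \ref{D:xm}. Once that closure is established—which ultimately rests on the fact that $\hat{K} \to K$ is a principal bundle and a morphism of principal bundles covering a fixed map is determined by its value at a single point of each fibre, together with the centrality of $\ker t = U(1)$ ensuring the tensor-product structure is well behaved—the result follows.
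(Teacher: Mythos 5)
Your setup is reasonable --- an \xm-morphism is a map of principal $U(1)$-bundles covering the identity of $L^n\times K^m$, so two of them differ by a function $\lambda\colon L^n\times K^m\to U(1)$, and the task is to force $\lambda\equiv 1$ --- but the argument you give for that has a genuine gap at its final step. You assert that ``any two \xm-morphisms $\psi_1,\psi_2$ decompose into the same sequence of canonical fillers and therefore coincide.'' There is no reason for the two decompositions to be the same: Definition \ref{D:xm} allows an \xm-morphism to be assembled from the structural maps (\ref{E:mult})--(\ref{E:inv}) by \emph{arbitrary} compositions and tensor products, so $\psi_1$ and $\psi_2$ may be built from genuinely different sequences (different bracketings of the multiplication, or the adjoint square applied before versus after a multiplication, using $\Ad_l(k_1k_2)=\Ad_l(k_1)\Ad_l(k_2)$). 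The uniqueness of each individual filler of a cartesian square does not imply that two \emph{different} composites of such fillers between the same pair of bundles agree; that coherence statement is precisely the content of the lemma, and your induction assumes it rather than proves it. There is also a secondary error earlier: an \xm-morphism does \emph{not} force the two defining maps $f,g\colon L^n\times K^m\to K$ to be equal --- the square (\ref{E:Ad}) gives an \xm-isomorphism $\hat K_{\Ad_l(k)}\simeq\hat K_k$ between pullbacks along the distinct maps $(l,k)\mapsto\Ad_l(k)$ and $(l,k)\mapsto k$. The defining maps agree only after the occurrences of $\Ad$ are stripped out, as the paper notes in the remark following the lemma.

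The paper closes exactly this gap with a normalization argument that your proof is missing. One factors the defining map $f$ as $L^n\times K^m\xrightarrow{g}L^\ell\times K^k\xrightarrow{a}K^k\xrightarrow{m}K$, pushing all multiplications to the last stage and all adjoint actions to the stage before (possible because $\Ad$ is a homomorphism), and then uses the pullback property of (\ref{E:Ad}) to produce a \emph{unique} isomorphism from any \xm-bundle to a ``reduced form'' pulled back along structure maps not involving $\Ad$. Two \xm-bundles related by an \xm-morphism have the same reduced form, there is a unique \xm-morphism between reduced forms, and uniqueness for the original pair follows by conjugating with the unique reduction isomorphisms. To repair your proof you would need to supply an equivalent coherence/normalization step; without it the claim that $\lambda\equiv 1$ does not follow.
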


\begin{proof}
To prove this we first make the following observation: Suppose $P$ is an \xm-bundle that is the pullback of a map $f \colon L^n \times K^m \to K$. We can factor $f$ through a product of $K$'s by leaving all the multiplication maps in $f$ until last; that is, we can write $f$ as a $k$-tuple $(f_0,\ldots, f_k) \colon L^n \times K^m \to K^k$, composed with the map $K^k \xrightarrow{m} K$ given by multiplication. The maps $f_1, \ldots, f_k$ do not contain among them any multiplication maps in $K$. Moreover, since $\Ad$ is a homomorphism we can further factorise the map $(f_0,\ldots, f_k)$ through $L^{\ell} \times K^{k}$ by leaving all the maps involving $\Ad$ until last; so $(f_0,\ldots, f_k)$ is given by a composition $L^n \times K^m \xrightarrow{g} L^{\ell} \times K^{k} \xrightarrow{a} K^k$, where we have denoted by $a$ the map involving all adjoints and by $g$ the map comprised of all other structure. As above, denote by $\hat K^{\hat\otimes k}$ the external tensor product of $\hat K$ with itself over $K^k$. Then the pullback diagram (\ref{E:Ad}) implies that the following diagram is a pullback
$$
\xymatrix{
L ^{\ell}\times \hat K^{\hat\otimes k} \ar[r] \ar[d]	& \hat K^{\hat\otimes k}\ar[d]\\
L^{\ell} \times K^{k} \ar^-{a}[r]		& K^k
}
$$
On the other hand, if we consider the map $L^\ell \times K^k \xrightarrow{\pr} K^k \xrightarrow{m} K$ given by projection and then multiplication, we see that the following diagram is also a pullback
$$
\xymatrix{
L ^{\ell}\times \hat K^{\hat\otimes k} \ar[r] \ar[d]	& \hat K^{\hat\otimes k} \ar[r] \ar[d]	& \hat K\ar[d]\\
L^{\ell} \times K^{k} \ar^-\pr[r]	& K^k \ar^m[r]	& K
}
$$
This gives the commutative diagram
$$
\xymatrix@C=7ex@R=3ex{
	&& \hat K^{\hat\otimes k} \ar[rr] \ar[ddd]|!{[ddl];[ddrr]}\hole	&& \hat K \ar[ddd]|!{[ddl];[ddr]}\hole&\\
P \ar[urr]	\ar[dr] \ar[ddd] &&&&&&\\
	& L^{\ell} \times \hat K^{\hat\otimes k} \ar[rr] \ar[ddd] \ar[uur]	&& \hat K^{\hat\otimes k} \ar[ddd] \ar[rr]&& \hat K \ar[ddd]\\
	&& K^{k} \ar^(0.45)m[rr]|!{[ur];[ddr]}\hole	&& K &\\
L^n \times K^m \ar^(0.4){(f_0, \ldots, f_k)}[urr]|!{[uur];[dr]}\hole	\ar_g[dr] &&&&&&\\
	& L^{\ell} \times K^{k} \ar_\pr[rr]  \ar_a[uur]	&& K^{k} \ar_m[rr]&& \hat K\\
}
$$

This means that an \xm-bundle $P$ given by a map $f$ is isomorphic (via a unique isomorphism) to a bundle pulled back by only the structure maps that do not include $\Ad$ (i.e.~along the bottom sequence of arrows in the diagram above). Now suppose that we have two \xm-bundles $P$ and $Q$, with an \xm-morphism between them. Then if we write them in the reduced form above (by which we mean they are pullbacks by maps not involving the adjoint action) there will be a unique \xm-morphism between them. Since there is a unique isomorphism from $P$ to its reduced form and from $Q$ to its reduced form, we have a unique \xm-morphism from $P$ to $Q$.
\end{proof}

What Lemma \ref{L:uniqueness} means is that if we have two \xm-bundles and we write out the fibres of each as 
$$
\hat K_{f_1(l_1, \ldots, l_n, k_1, \ldots, k_m)}   \otimes \hat K_{f_2(l_1, \ldots, l_n, k_1, \ldots, k_m)} \otimes \cdots \otimes \hat K_{f_k(l_1, \ldots, l_n, k_1, \ldots, k_m)},
$$
 then if the product of all the subscripts are equal once the maps involving $\Ad$ are removed, there is a unique \xm-morphism between them.

\subsection{Bundle 2-gerbes and trivialisations}

In \cite{Ste} the third author defines a notion related to the one developed in this paper; that of a \emph{bundle 2-gerbe}. The definition in \cite{Ste} is quite complicated, however we can make a useful simplification by employing the ideas from Section \ref{sec:ss&bg}, specifically Proposition \ref{prop:trivialisation descent}. This gives us the following equivalent definition of bundle 2-gerbe:

\begin{definition}\label{D:bundle 2-gerbe}
A \emph{bundle 2-gerbe} $(\cG, P)$ on $X$ consists of the following data:
\begin{enumerate}
	\item a surjective submersion $P \to X$;
	\item a bundle gerbe $\cG = (E, Y)$ over $P^{[2]}$;
	\item a trivialisation $M$ of $(\d(E), \mu^{-1}(Y)_2)$; and
	\item a section $a$ of $A_M \to P^{[4]}$ satisfying $\d (a) = 1$ as a section of $\d(A_M)$.
\end{enumerate}
\end{definition}
Here the bundle gerbe $(\d(E), \mu^{-1}(Y)_2)$ is the restriction of $\d(\cG)$ to the surjective submersion $\mu^{-1}(Y)_2 \to P^{[3]}$ as in Example \ref{ex:delta G for a bundle 2-gerbe}, and the line bundle $A_M \to P^{[4]}$ is the descent of $\d(M) \to \mu^{-1}(Y)_2$ as in Proposition \ref{prop:trivialisation descent}.

\begin{remark}
We leave it to the reader to show that this definition is equivalent to the one in \cite{Ste}. The main point is that the definition from \cite{Ste} involves a trivialisation $M$ of the bundle gerbe $(\d(E), \d(Y))$, and so $\d(M) \to \d^2(Y)$ does not descend as in our definition (since the bundle gerbe $\d(\d(E), \d(Y)) = (\d^2(E), \d^2(Y))$ is not strongly trivial). Therefore one needs to consider the difference of $\d(M)$ and the canonical trivialisation of $(\d^2(E), \d^2(Y))$. One then has a section of this and the appropriate conditions on this section.
\end{remark}

Specifically, we are interested in trivial bundle 2-gerbes. With the appropriate modifications the definition of a trivialisation of a bundle 2-gerbe  is

\begin{definition}
A bundle 2-gerbe $(\cG, P)$ over $X$ is \emph{trivial} if the following conditions are satisfied:
\begin{enumerate}
  \item there exists a bundle gerbe $\cH = (Q, Z)$ over $P$ and a stable isomorphism $L \colon  \cG \to \d (\cH)$, where $\d(\cH) = (\d(Q), \mu^{-1}(Z^2))$;
  \item there exists a section $\theta$ of the bundle $M\otimes \d (L)$ over $P^{[3]}$ satisfying $\d (\theta) = a$.
\end{enumerate}
The data $(\cH, L,\theta)$ will be called a \emph{trivialisation} of the bundle 2-gerbe.
\end{definition}

Note that if $\cG = (E, Y)$ then in (1) the bundle $L$ sits over the space  $Y \times_{P^{[2]}} \mu^{-1}(Z^2)$ and in (2) $M\otimes \d(L)$ sits over the space $\mu^{-1}(Y)_2 \times_{P^{[3]}} \mu^{-1}(Z^3)$ but descends to $P^{[3]}$. The equation $\d (\theta) = a$ makes sense because $\d(\theta)$ is a section of $\d(M\otimes \d(L)) = \d(M)$, viewed as a bundle over $P^{[4]}$, which is the bundle $A_M$.

\subsection{The Chern--Simons bundle 2-gerbe}\label{SS:CS}

The example of a bundle 2-gerbe in which we are interested is the \emph{Chern--Simons bundle 2-gerbe} \cite{CarJohMur} associated to a principal $G$-bundle $P \to X$. This is defined by taking the simplicial manifold $P^{[\bullet+1]}$ and using the isomorphism $P^{[2]} = P \times G$ to pull back the basic gerbe on $G$ to $P^{[2]}$. The model of the basic bundle gerbe that we use here is different to that in Section \ref{S:basic}; it is the lifting bundle gerbe for the path fibration of the group $G$. The lifting bundle gerbe was introduced in \cite{Mur} and the example of the path fibration of a compact Lie group $G$ was studied in detail in \cite{MurSte03}. It is given by taking the surjective submersion $PG \to G$, which is a principal $\Omega G$-bundle, and identifying $PG^{[2]}$ with $PG \times \Omega G$. We then pull back the central extension $\widehat{\Omega G}$ by the projection $PG \times \Omega G \to \Omega G$.

Next we give the data of the Chern--Simons bundle 2-gerbe in detail. According to the description above, it is the pullback of the basic gerbe on $G$. Thus we have the following depiction.
$$
\xymatrix@R=1.5ex@C=2ex{
  & & PG \times \widehat{\Omega G}\ar[dd] &\\
  P \times PG \times \widehat{\Omega G}\ar[dd]  & & &\\
  & & PG \times \Omega G\ar@<0.75ex>[]!DR;[dr]\ar@<-0.75ex>[]!DR;[dr]    &\\
  P \times PG \times \Omega G\ar@<0.75ex>[]!DR;[dr]!UL\ar@<-0.75ex>[]!DR;[dr]!UL  & & &PG\ar[dd]\\
  & P \times PG\ar[dd]  & &\\
  & & &G  \\
  & P \times G\ar[urr]\ar@<0.75ex>[]!DR;[dr]\ar@<-0.75ex>[]!DR;[dr] & &\\
  & & P \ar[dd] &\\
  &&&\\
  & & X&  
}
$$

We shall denote the bundle gerbe over $P\times G$ by $\cG$.

In order to describe the rest of the data defining the Chern--Simons bundle 2-gerbe as per Definition \ref{D:bundle 2-gerbe}, we need to consider the semi-simplicial surjective submersion $\mu^{-1}(P \times PG)_\bullet \to P^{[\bullet+1]}$ and the bundle gerbe 
$$
\d(\cG) = (\d(P \times PG \times \widehat{\Omega G}), \mu^{-1}(P \times PG)_2).
$$
Calculation shows that the low dimensional spaces in the semi-simplicial surjective submersion $\mu^{-1}(P \times PG)_\bullet \to P^{[\bullet + 1]}$ are
$$
 \xymatrix{
P\times PG^3\times {\Omega G}^3 \ar[d] \ar@<-1.2ex>[r]\ar@<-0.4ex>[r]\ar@<0.4ex>[r]\ar@<1.2ex>[r] &    P\times PG^2\times {\Omega G} \ar[d] \ar@<-1ex>[r]\ar[r]\ar@<1ex>[r] & P\times PG \ar[d] \ar@<0.6ex>[r] \ar@<-0.6ex>[r] & P \ar@{=}[d]\\
P \times G^3 \ar@<-1.2ex>[r]\ar@<-0.4ex>[r]\ar@<0.4ex>[r]\ar@<1.2ex>[r]  &    P \times G^2 \ar@<-1ex>[r]\ar[r]\ar@<1ex>[r] & P\times G \ar@<0.6ex>[r] \ar@<-0.6ex>[r] & P
  }
 $$
The maps $d_i \colon \mu^{-1}(P \times PG)_2 = P\times PG^2\times {\Omega G} \to P \times PG$ are given by 
\begin{equation}\label{E:d012}
\begin{aligned}
	d_0(p, \gamma_1, \gamma_2, \omega) 	&= (p \gamma_1(1), \gamma_2), \\
	d_1(p, \gamma_1, \gamma_2, \omega) 	&= (p, \gamma_1\gamma_2\omega),\\
	d_2(p, \gamma_1, \gamma_2, \omega) 	&= (p, \gamma_1),
\end{aligned}
\end{equation}
and the maps $d_i \colon \mu^{-1}(P \times PG)_3 = P\times PG^3\times {\Omega G}^3 \to P\times PG^2\times {\Omega G}$ are given by
\begin{equation}\label{E:d0123}
\begin{aligned}
	d_0(p, \gamma_1, \gamma_2, \gamma_3, \omega_1, \omega_2, \omega_3) 	&= (p \gamma_1(1), \gamma_2, \gamma_3 , \omega_3), \\
	d_1(p, \gamma_1, \gamma_2, \gamma_3,  \omega_1, \omega_2, \omega_3) 	&= (p, \gamma_1\gamma_2\omega_1, \gamma_3, \Ad_{\gamma_3^{-1}}(\omega_1^{-1}) \omega_2),\\
	d_2(p, \gamma_1, \gamma_2, \gamma_3,  \omega_1, \omega_2, \omega_3) 	&= (p, \gamma_1, \gamma_2\gamma_3\omega_3, \omega_3^{-1}\omega_2),\\
	d_3(p, \gamma_1, \gamma_2, \gamma_3, \omega_1, \omega_2, \omega_3) 	&= (p, \gamma_1, \gamma_2, \omega_1).
\end{aligned}
\end{equation}
Note that 
$$
\mu^{-1}(P \times PG)_2^{[2]} = (P \times PG^2 \times \Omega G) \times_{P \times G^2} (P \times PG^2 \times \Omega G) = P \times PG^2 \times \Omega G^4
$$
via the projections $(p, \gamma_1 ,\gamma_2, \omega_0, \omega_1, \omega_2, \omega_3) \mapsto (p, \gamma_1, \gamma_2, \omega_0)$ and $(p, \gamma_1 \omega_1, \gamma_2 \omega_2, \omega_3)$. The maps $\mu^{-1}(P \times PG)_2^{[2]} = P \times PG^2 \times \Omega G^4 \longrightthreearrow P \times PG \times \Omega G = (P \times PG)^{[2]}$ are given by
\begin{align*}
	d_0(p, \gamma_1, \gamma_2, \omega_0, \omega_1, \omega_2, \omega_3) 	&= (p \gamma_1(1), \gamma_2, \omega_2), \\
	d_1(p, \gamma_1, \gamma_2, \omega_0, \omega_1, \omega_2, \omega_3) 	&= (p, \gamma_1\gamma_2\omega_0, \omega_0^{-1} \Ad_{\gamma_2^{-1}} (\omega_1)\omega_2 \omega_3),\\
	d_2(p, \gamma_1, \gamma_2, \omega_0, \omega_1, \omega_2, \omega_3) 	&= (p, \gamma_1, \omega_1).
\end{align*}
Therefore, the bundle gerbe $\d(\cG)$ is given by the line bundle $E \to P \times PG^2 \times \Omega G^4$ whose fibre at $(p, \gamma_1, \gamma_2, \omega_0, \omega_1, \omega_2, \omega_3)$ is
\begin{align*}
E_{(p, \gamma_1, \gamma_2, \omega_0, \omega_1, \omega_2, \omega_3)}
	&\simeq\widehat{\Omega G}_{\omega_2} \otimes \widehat{\Omega G}^*_{\omega_0^{-1} \Ad_{\gamma_2^{-1}} (\omega_1)\omega_2 \omega_3} \otimes \widehat{\Omega G}_{\omega_1}\\
	&\simeq \widehat{\Omega G}_{\omega_2} \otimes \widehat{\Omega G}^*_{\omega_0^{-1}}\otimes \widehat{\Omega G}^*_{ \Ad_{\gamma_2^{-1}} (\omega_1)}\otimes \widehat{\Omega G}^*_{\omega_2}\otimes \widehat{\Omega G}^*_{ \omega_3} \otimes \widehat{\Omega G}_{\omega_1}\\
	&\simeq \widehat{\Omega G}_{\omega_2} \otimes \widehat{\Omega G}_{\omega_0}\otimes \widehat{\Omega G}_{ \omega_1^{-1}}\otimes \widehat{\Omega G}_{\omega_2^{-1}}\otimes \widehat{\Omega G}_{ \omega_3^{-1}} \otimes \widehat{\Omega G}_{\omega_1}\\
	&\simeq \widehat{\Omega G}_{\omega_0}\otimes\widehat{\Omega G}_{ \omega_3^{-1}}.
\end{align*}
The isomorphisms above are the unique ones guaranteed by Lemma \ref{L:uniqueness}.
This calculation shows that the bundle gerbe $\d(\cG)$ is trivial with trivialisation $M = P \times PG^2 \times \widehat{\Omega G}^* \to P \times PG^2 \times \Omega G$, which is the data of Definition \ref{D:bundle 2-gerbe} (3).

For \ref{D:bundle 2-gerbe} (4) we note that the bundle $\d(M) \to P \times PG^3 \times \Omega G^3$ has fibre at the point $(p, \gamma_1, \gamma_2, \gamma_3, \omega_1, \omega_2, \omega_3)$ given by 
$$
\widehat{\Omega G}_{\omega_3}^*\otimes\widehat{\Omega G}_{ \Ad_{\gamma_3^{-1}}(\omega_1^{-1}) \omega_2}\otimes \widehat{\Omega G}_{\omega_3^{-1}\omega_2}^*\otimes\widehat{\Omega G}_{ \omega_1},
$$
and so we have the (unique) sequence of isomorphisms
\begin{align*}
\d(M)_{(p, \gamma_1, \gamma_2, \gamma_3, \omega_1, \omega_2, \omega_3)}
	&\simeq\widehat{\Omega G}_{\omega_3}^*\otimes\widehat{\Omega G}_{ \Ad_{\gamma_3^{-1}}(\omega_1^{-1}) \omega_2}\otimes \widehat{\Omega G}_{\omega_3^{-1}\omega_2}^*\otimes\widehat{\Omega G}_{ \omega_1}\\
	&\simeq \widehat{\Omega G}_{\omega_3}^*\otimes\widehat{\Omega G}_{ \Ad_{\gamma_3^{-1}}(\omega_1^{-1})} \otimes \widehat{\Omega G}_{ \omega_2}\otimes \widehat{\Omega G}_{\omega_3^{-1}}^* \otimes \widehat{\Omega G}_{\omega_2}^*\otimes\widehat{\Omega G}_{ \omega_1}\\
	&\simeq\widehat{\Omega G}_{\omega_3}^*\otimes\widehat{\Omega G}_{ \omega_1^{-1}} \otimes \widehat{\Omega G}_{ \omega_2}\otimes \widehat{\Omega G}_{\omega_3^{-1}}^* \otimes \widehat{\Omega G}_{\omega_2}^*\otimes\widehat{\Omega G}_{ \omega_1}\\
	&\simeq\widehat{\Omega G}_{\omega_3}^*\otimes\widehat{\Omega G}_{ \omega_1}^* \otimes \widehat{\Omega G}_{ \omega_2}\otimes \widehat{\Omega G}_{\omega_3} \otimes \widehat{\Omega G}_{\omega_2}^*\otimes\widehat{\Omega G}_{ \omega_1}\\
	&\simeq U(1).
\end{align*}
Therefore we have a trivialisation of $\d(M)$ and hence $A_M$, by Lemma \ref{L:uniqueness}. We define the section $a$ from Definition \ref{D:bundle 2-gerbe} (4) to be this trivialisation.

The situation is summarised in the following diagram
$$
\xymatrix@C=0.3ex{
	&	&	&	&\d\left(P \times PG \times \widehat{\Omega G}\right) \ar[d]	&	&P \times PG \times \widehat{\Omega G}	\ar[d]	\\
A_M \ar@/_5ex/[ddr]	&\d(M)\ar[d]	&	&M \ar[d]	&P \times PG^2 \times \Omega G^4 \ar@<0.75ex>[]!DL;[dl] \ar@<-0.75ex>[]!DL;[dl] \ar@<-1ex>[rr]\ar[rr]\ar@<1ex>[rr] 	&	&P \times PG \times \Omega G	 \ar@<1.5ex>[]!D;[dl]+UR \ar[]!D;[dl]+UR	\\
	&P \times PG^3 \times \Omega G^3 \ar@<-1.2ex>[rr]\ar@<-0.4ex>[rr]\ar@<0.4ex>[rr]\ar@<1.2ex>[rr] \ar[d]	&	&P \times PG^2 \times \Omega G \ar@<-1ex>[rr]\ar[rr]\ar@<1ex>[rr] \ar[d]	&	&P \times PG \ar[d]	&		\\
	&P \times G^3 \ar@<-1.2ex>[rr]\ar@<-0.4ex>[rr]\ar@<0.4ex>[rr]\ar@<1.2ex>[rr]	&	&P \times G^2 \ar@<-1ex>[rr]\ar[rr]\ar@<1ex>[rr]	&	&P \times G \ar@<0.6ex>[r] \ar@<-0.6ex>[r] 	&P\ar[d]	\\
	&&&&&&X
}
$$

\subsection{The simplicial extension of a string structure}\label{SS:simplicial extension}
 
Suppose now that we have a trivialisation $\cH$ of the Chern--Simons bundle 2-gerbe associated to the $G$-bundle $P \to X$. We will build a simplicial extension of $\cH$ over the simplicial manifold $E\cK(P)_\bullet$ associated to the action of the crossed module $\cK = (\widehat{\Omega G} \to PG)$ from Example \ref{ex:string} on $P$. We will now describe this simplicial manifold in more detail.

Given a crossed module $\hat K \xrightarrow{t} L$ of Lie groups acting on a manifold $P$, one can form an action \emph{2-groupoid}. This has as objects the manifold $P$, as 1-arrows the product $P \times L$, and as 2-arrows the product $P \times L\times \hat K$:
\[\xymatrix@C=6ex{
  P \times L \times \hat K \ar@<0.6ex>[r]^-{\pr_{12}} \ar@<-0.6ex>[r]_-{1_P\times f} & P \times L  \ar@<0.6ex>[r]^-{\pr_{1}} \ar@<-0.6ex>[r]_-{ \text{act}} & P 
}\]
where the action $f$ of $\hat K$ on $L$ is via the map $t$ and the action of $L$ on $P$ is part of the definition of the action of $\cK$ on $P$.
The precise description of the structural maps (i.e.\ sources, targets and compositions) of this 2-groupoid we shall leave to the reader as an instructive exercise, since we are more interested in the nerve of this 2-groupoid (as defined by Street \cite{Street} and Duskin \cite{Duskin}), which we shall describe explicitly in low dimensions. This nerve is what we have called $E\cK(P)_\bullet$.

The intuitive picture that the reader should keep in mind is that of the nerve of the action 1-groupoid, but instead of commuting triangles making up the dimension 2 faces of simplices, one should fill it with an element of the group $\hat K$. 
A 2-simplex is thus a triangle commuting up to a 2-arrow; a 3-simplex is a tetrahedron with faces labelled as such as commuting in the 2-dimensional sense. 
Table \ref{table:nerve_of_action_2-groupoid} specifies the face maps that we shall need in the course of this section.

\begin{table}
  \centering
  \[\xymatrix@C=4ex{
  E\cK(P)_\bullet =  \cdots  \ P \times L^3 \times \hat K^3  \ar@<-1.2ex>[r]\ar@<-0.4ex>[r]\ar@<0.4ex>[r]\ar@<1.2ex>[r]  &P\times L^2 \times \hat K  \ar@<-1ex>[r]\ar[r]\ar@<1ex>[r]  &P \times L \ar@<0.6ex>[r] \ar@<-0.6ex>[r] &P
    }
  \]
  \begin{tabular}{l|c}
    \hline\\
    $\xymatrix@C=3ex{E\cK(P)_1 \ar@<0.6ex>[r] \ar@<-0.6ex>[r] &E\cK(P)_0}$   & $\begin{aligned} d_0(p,l) &= pl \\ d_1(p,l) &= p \end{aligned}$ \\
              \\
    \hline\\
    $\xymatrix@C=3ex{E\cK(P)_2 \ar@<-1ex>[r]\ar[r]\ar@<1ex>[r] &E\cK(P)_1}$  & 	$\begin{aligned} d_0(p,l_1,l_2,k) &= (pl_1,l_2)\\
						d_1(p,l_1,l_2,k) &= (p,l_1l_2t(k))\\
						d_2(p,l_1,l_2,k) &= (p,l_1) \end{aligned}$\\ 
                \\
    \hline\\
    $\xymatrix@C=3ex{E\cK(P)_3 \ar@<-1.2ex>[r]\ar@<-0.4ex>[r]\ar@<0.4ex>[r]\ar@<1.2ex>[r]  &E\cK(P)_2}$
		& $\begin{aligned} d_0(p,l_1,l_2,l_3,k_1,k_2,k_3) &= (p l_1, l_2, l_3, k_3)\\
		d_1(p,l_1,l_2,l_3,k_1,k_2,k_3) &= (p, l_1 l_2 t(k_1), l_3, k_2)\\[-5pt] 
		d_2(p,l_1,l_2,l_3,k_1,k_2,k_3) &= (p, l_1, l_2 l_3 t(k_3), k_3^{-1} (k_1^{l_3^{-1}}) k_2)\\
		d_3(p,l_1,l_2,l_3,k_1,k_2,k_3) &= (p, l_1, l_2, k_1) \end{aligned}$\\
                \\
    \hline
  \end{tabular}\\[10pt]
  \caption{The nerve of the action 2-groupoid in low dimensions} 
  \label{table:nerve_of_action_2-groupoid}
\end{table}

The crossed module we are interested in is $\widehat{\Omega G} \to PG$, from Example \ref{ex:string}, which gives rise to the String group of $G$. As described earlier, it acts naturally on the $G$-bundle $P$ via the map to $G$.
For the simplicial manifold $E\cK(P)_\bullet$ arising from this action there is a simplicial map $e\colon E\cK(P)_\bullet \to P^{[\bullet+1]}$, given by evaluating all paths at their endpoints, and forgetting factors of $\widehat{\Omega G}$. 
In low degrees this is
\[
  \xymatrix{
\cdots \ar@<-1.4ex>[r] \ar@<-0.7ex>[r]\ar[r]\ar@<0.7ex>[r]\ar@<1.4ex>[r]& P\times PG^3\times \widehat{\Omega G}^3 \ar[d]_{e_3 = \id \times \ev_1^3} \ar@<-1.2ex>[r]\ar@<-0.4ex>[r]\ar@<0.4ex>[r]\ar@<1.2ex>[r] &    P\times PG^2\times \widehat{\Omega G} \ar[d]_{e_2 = \id \times \ev_1^2} \ar@<-1ex>[r]\ar[r]\ar@<1ex>[r] & P\times PG \ar@<0.6ex>[r] \ar@<-0.6ex>[r] \ar[d]_{e_1 = \id \times \ev_1} & P \ar@{=}_{e_0 = \id}[d] \\
\cdots \ar@<-1.4ex>[r]\ar@<-0.7ex>[r]\ar[r]\ar@<0.7ex>[r]\ar@<1.4ex>[r]& P \times G^3 \ar@<-1.2ex>[r]\ar@<-0.4ex>[r]\ar@<0.4ex>[r]\ar@<1.2ex>[r]  &    P \times G^2 \ar@<-1ex>[r]\ar[r]\ar@<1ex>[r] & P\times G \ar@<0.6ex>[r] \ar@<-0.6ex>[r] & P
  }
\]

We shall denote the operation $\d$ for the simplicial manifolds $E\cK(P)_\bullet$ and $P^{[\bullet+1]}$ by $\delta_{E\cK}$ and $\delta_P$, respectively.  Thus the definition of the Chern--Simons bundle 2-gerbe and a trivialisation of it involve $\delta_P$ everywhere, while a simplicial extension of $\cH$ over $E\cK(P)_\bullet$ will involve $\d_{E\cK}$. We also remind the reader that if $\cG = (E, Y)$ over $P^{[2]}$ and $\cH = (Q, Z)$ over $P$ then by $\d^k(\cG)$ we mean $(\d^k(E), \mu^{-1}(Y)_{k})$ and by $\d^k(\cH)$ we mean $(\d^k(Q), \mu^{-1}(Z^{k+1}))$.

Given a trivialisation $(\cH,L,\theta)$ of the Chern--Simons bundle 2-gerbe, we will construct a simplicial extension of $\mathcal{H}$ over $E\cK(P)_\bullet$ by pulling back the data of the bundle 2-gerbe $(\cG, P )$ along $e$.

We construct the trivialisation $T$ of $ \d_{E\cK}(\cH) $ as follows. Notice first, since $e$ is a simplicial map, we have that $e^{-1} (\d_P (\cH))$ is canonically isomorphic to $\d_{E\cK}( \cH)$. We therefore have the stable isomorphism $e_1^{-1}(L) \colon e_1^{-1} (\cG) \to e_1^{-1}(\d_P(\cH)) = \d_{E\cK}(\cH)$. To construct $T$ we combine this with a trivialisation of $e_1^{-1}(\cG)$ using the following lemma.

\begin{lemma}\label{L:object space}
Let $(Q,Y)$ be a bundle gerbe over a manifold $X$ with surjective submersion $\pi \colon Y \to X$. Then $\pi^{-1}(Q, Y)$ (as a bundle gerbe over $Y$) has a canonical trivialisation given by $\tau = Q$.
\end{lemma}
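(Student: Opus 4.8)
The plan is to unwind the definition of the pullback bundle gerbe $\pi^{-1}(\cG)$, where $\cG = (Q,Y)$, and then exhibit $Q$ itself as the trivialising line bundle, with the trivialisation morphism supplied by the bundle gerbe multiplication. This is the same mechanism already used in the proof of Proposition \ref{prop:Psimplicial}, where $T_{(y_0,y_1)} = P_{(y_0,y_1)}$ trivialises $\delta(P)$.

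First I would make the data of $\pi^{-1}(Q,Y)$ explicit. Pulling back the surjective submersion $\pi\colon Y\to X$ along $\pi$ itself gives $\pi^{-1}(Y) = Y\times_X Y = Y^{[2]}$, regarded now as a surjective submersion over $Y$ via the first projection $(y_0,y_1)\mapsto y_0$. The fibre product $(\pi^{-1}(Y))^{[2]}$ formed over this new base $Y$ then consists of pairs $((y_0,y_1),(y_0,y_2))$, which I identify with $Y^{[3]}$ through $(y_0,y_1,y_2)$. Under this identification the line bundle of $\pi^{-1}(Q,Y)$ is the pullback of $Q$ along the map $(y_0,y_1,y_2)\mapsto (y_1,y_2)$, so its fibre over $(y_0,y_1,y_2)$ is $Q_{(y_1,y_2)}$.

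Next I would set $\tau = Q$, viewed as a line bundle over $\pi^{-1}(Y) = Y^{[2]}$, and compute $\delta(\tau)$ over $(\pi^{-1}(Y))^{[2]} = Y^{[3]}$. With the conventions of Section \ref{SS:ss&bg} the two face maps send $(y_0,y_1,y_2)$ to $(y_0,y_2)$ and $(y_0,y_1)$ respectively, whence $\delta(\tau)_{(y_0,y_1,y_2)} = Q_{(y_0,y_2)}\otimes Q_{(y_0,y_1)}^*$. The bundle gerbe multiplication $Q_{(y_0,y_1)}\otimes Q_{(y_1,y_2)}\to Q_{(y_0,y_2)}$ then yields a canonical isomorphism $Q_{(y_0,y_2)}\otimes Q_{(y_0,y_1)}^*\simeq Q_{(y_1,y_2)}$, and the right-hand side is exactly the line bundle of $\pi^{-1}(Q,Y)$ identified in the previous step. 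This is the required trivialisation morphism $\delta(\tau)\to \pi^{-1}(Q)$, and it is canonical because it is built solely from the multiplication on $Q$.

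Finally I would check that this morphism is compatible with the bundle gerbe product, which is precisely the associativity of the multiplication $m$ on $Q$; since $(Q,Y)$ is a bundle gerbe this holds by hypothesis. I do not expect any genuine obstacle: the only thing demanding care is the bookkeeping of the iterated fibre products and the orientation convention for $\delta$, while the entire mathematical content reduces to the associativity of $m$.
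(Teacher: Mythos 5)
Your proposal is correct and follows essentially the same route as the paper: identify $\pi^{-1}(Y)=Y^{[2]}$ and $(\pi^{-1}(Y))^{[2]}=Y^{[3]}$, compute the fibre of the pulled-back line bundle as $Q_{(y_1,y_2)}$, and use the bundle gerbe multiplication to identify it with $\delta(Q)$. The only difference is that you spell out the compatibility with the bundle gerbe product via associativity, which the paper leaves implicit.
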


\begin{proof}
First notice that the statement makes sense because $\pi^{-1}(Q, Y)$ has as surjective submersion the pullback $\pi^{-1}(Y) = Y^{[2]}$, and $\tau = Q$ is a line bundle over $Y^{[2]}$. The fibre product $Y^{[2]} \times_Y Y^{[2]}$ is given by $Y^{[3]}$ and the face maps are projection onto the first and second, and first and third factors, respectively. Thus we have $\pi^{-1}(Q)_{(y_1, y_2, y_3)} = Q_{(y_2, y_3)} = Q_{(y_2, y_1)} Q_{(y_1, y_3)} = Q_{(y_1, y_2)}^*  Q_{(y_1, y_3)}$, and so $\pi^{-1}(Q)$ is trivialised by $\tau = Q$.
\end{proof}

Now, $e_1 \colon P\times PG \to P \times G$ is the surjective submersion for the bundle gerbe $\cG$, so Lemma \ref{L:object space} gives us the trivialisation $\tau$ of $e_1^{-1}(\cG)$. Thus we have the trivialisation $T = \tau \otimes e_1^{-1}(L)$ of $\d_{E\cK}(\cH)$.

Next we need a section $s$ of $A_T = \d_{E\cK} (\tau \otimes e_1^{-1}(L) )$ over $E\cK(P)_2 = P \times PG^2 \times \widehat{\Omega G}$. Since $\cH$ is a trivialisation of the Chern--Simons bundle 2-gerbe we have a section $\theta$ of $M \otimes \d_P (L)$ over $P \times G^2$. We claim that $e_2^{-1}(M \otimes \d_P(L)) $ is canonically isomorphic to $\d_{E\cK}(\tau \otimes e_1^{-1}(L))$ and therefore we can define $s = e_2^{-1}(\theta)$. We have 
$$
e_2^{-1}(M \otimes \d_P(L))  = e_2^{-1}(M) \otimes e_2^{-1}( \d_P(L))  = e_2^{-1}(M) \otimes \d_{E\cK}(e_1^{-1}(L) ).
$$
So we need to show that $e_2^{-1}(M)$ is canonically isomorphic to $\d_{E\cK}(\tau)$. Notice that $P \times PG^2 \times \widehat{\Omega G}$ is the total space of the dual of $M$. We have the following result.

\begin{lemma}\label{L:total space triv}
Let $(Q,Y)$ be a bundle gerbe on a manifold $X$, with surjective submersion $\pi \colon Y \to X$. Let $\hat{\pi} \colon R\to Y$ be a trivialisation of $(Q,Y)$, and $R^*$ its dual. Denote by $p$ the composite map $\pi \circ \hat{\pi} \colon R^* \to Y \to X$. Recall that, by Lemma \ref{L:object space}, the bundle gerbe $\pi^{-1}(Q, Y)$ has a trivialisation $\tau$. Then the two trivialisations $p^{-1}(R)$ and $\hat{\pi}^{-1}(\tau^*)$ of $p^{-1}(Q, Y)$ are canonically isomorphic.
\end{lemma}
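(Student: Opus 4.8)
The plan is to compute the fibres of the two candidate trivialisations directly, observe that they are defined over a common surjective submersion over $R^*$, and then exhibit a canonical fibrewise isomorphism assembled from two ingredients: the defining isomorphism $\d_Y(R)\simeq Q$ of the trivialisation $R$, and the \emph{tautological} trivialisation of the pullback of $R^*$ to its own total space. The conceptual heart is this last observation: the pullback of a principal $U(1)$-bundle along its own projection is canonically trivial, with tautological section $\rho\mapsto \rho$.

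First I would pin down the common base. The pulled-back bundle gerbe $p^{-1}(Q,Y)$ has surjective submersion $p^{-1}(Y)=R^*\times_X Y$, and since $p=\pi\circ\hat\pi$ one checks that pulling back $\pi^{-1}(Q,Y)$ (whose submersion is $\pi^{-1}(Y)=Y^{[2]}\to Y$, the first projection) along $\hat\pi\colon R^*\to Y$ returns the \emph{same} bundle gerbe $p^{-1}(Q,Y)$, with its submersion canonically identified with $R^*\times_X Y$ via $(\rho,y)\mapsto(\rho,(\hat\pi(\rho),y))$. Hence both $p^{-1}(R)$ and $\hat\pi^{-1}(\tau^*)$ are line bundles over $R^*\times_X Y$. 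Reading off fibres at $(\rho,y)$, the first is $p^{-1}(R)_{(\rho,y)}=R_y$, while, since $\tau=Q$ by Lemma \ref{L:object space}, the second is $\hat\pi^{-1}(\tau^*)_{(\rho,y)}=Q^*_{(\hat\pi(\rho),y)}$.

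Next I would apply the defining trivialisation isomorphism of $R$ to rewrite $Q_{(\hat\pi(\rho),y)}$ in terms of $R_y$ and a factor supported at $\hat\pi(\rho)$, and then kill that second factor using the tautological section: the element $\rho\in R^*_{\hat\pi(\rho)}$ canonically trivialises the fibre of $R^*$ (equivalently of $R$) at $\hat\pi(\rho)$, and pulling this back along the projection $R^*\times_X Y\to R^*$ trivialises the factor over the whole base. Combining the two steps gives a canonical fibrewise identification of $p^{-1}(R)$ with $\hat\pi^{-1}(\tau^*)$ matching the duals as in the statement.

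Finally I would upgrade this fibrewise isomorphism to an isomorphism of trivialisations. As both trivialise $p^{-1}(Q,Y)$, this amounts to showing that the induced line bundle $p^{-1}(R)\oslash\hat\pi^{-1}(\tau^*)$ on $R^*$ is canonically trivial, equivalently that the fibrewise identification intertwines the two descent isomorphisms to $p^{-1}(Q)$ over $(R^*\times_X Y)^{[2]}$. Both descent data are built from the bundle gerbe product on $Q$, so this reduces to the associativity of that product together with the naturality of the tautological section in the $Y$-variable. I expect this compatibility check to be the only genuine obstacle: the fibrewise isomorphism is essentially immediate, but verifying that it respects descent, while keeping the dualisations and the direction of the trivialisation isomorphism consistent with the conventions fixed in Section \ref{SS:ss&bg}, is where the care is needed.
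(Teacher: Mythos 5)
Your argument is correct and is essentially the paper's own proof: both hinge on the identification $\tau\oslash\pi^{-1}(R)\simeq R^*$ (coming from $\tau=Q\simeq\delta_Y(R)$) together with the tautological triviality of $\hat{\pi}^{-1}(R^*)$ over the total space $R^*$. The only difference is organisational --- the paper computes the difference line bundle on $Y$ first and then pulls back along $\hat{\pi}$, so the descent-compatibility check you flag as the main remaining work is absorbed into the $\oslash$ construction; carrying out your fibrewise computation carefully will also settle the placement of the dual on $\tau$, on which the paper's statement and proof are not quite consistent.
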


\begin{proof}
Recall from the proof of Lemma \ref{L:object space} that the bundle gerbe $\pi^{-1}(Q, Y)$ over $Y$ has as surjective submersion $Y^{[2]} \to Y$. It has two trivialisations; $\tau$ and $\pi^{-1}(R)$. We have $\tau \oslash \pi^{-1}(R) = R^*$ since, for $y \in Y$ and any $x$ in the same fibre, $(\tau\otimes \pi^{-1}(R)^*)_{(y, x)} = Q_{(y, x)} R_x = R_y^*$, using the fact that $R$ is a trivialisation. We have $\hat{\pi}^{-1}(\tau) \oslash p^{-1}(R) = \hat{\pi}^{-1}(\tau\oslash \pi^{-1}(R)) = \hat{\pi}^{-1}(R^*)$, which is canonically trivial. Therefore $\hat{\pi}^{-1}(\tau)$ is canonically isomorphic to $p^{-1}(R)$.
\end{proof}

To apply Lemma \ref{L:total space triv} to the bundle gerbe $\d_P(\cG)$ with its trivialisation $M$ notice that $e_2$ factors as $P \times PG^2 \times \widehat{\Omega G} \xrightarrow{\hat{\pi}} P \times PG^2 \times {\Omega G} = \mu^{-1}(P \times PG)_2 \xrightarrow{\pi} P \times G^2$ such that the following diagram commutes
\[
  \xymatrix{
P\times PG^2\times \widehat{\Omega G} \ar[d]_{\hat \pi} \ar@<-1ex>[r]\ar[r]\ar@<1ex>[r] \ar@/_5ex/!DL;[dd]_{e_2}  & P\times PG \ar@<0.6ex>[r] \ar@<-0.6ex>[r] \ar@{=}[d] & P \ar@{=}[d] \\
P\times PG^2\times {\Omega G} \ar[d]_{\pi} \ar@<-1ex>[r]\ar[r]\ar@<1ex>[r] &  P\times PG \ar@<0.6ex>[r] \ar@<-0.6ex>[r] \ar[d]_{e_1} & P \ar@{=}[d] \\
P \times G^2 \ar@<-1ex>[r]\ar[r]\ar@<1ex>[r] & P\times G \ar@<0.6ex>[r] \ar@<-0.6ex>[r] & P
  }
\]
and so the trivialisation $e_2^{-1}(M)$ is canonically isomorphic to $\hat \pi^{-1}(\tau_{\d(\cG)})$, where $\tau_{\d(\cG)}$ is the canonical trivialisation of $\pi^{-1}(\d_P(\cG))$ given by Lemma \ref{L:object space}. But $\tau_{\d(\cG)}$ is isomorphic to $\d_P(\tau)$, where $\tau$ is the canonical trivialisation of $e_1^{-1}(\cG)$. Hence the pullback $\hat{\pi}^{-1} (\tau_{\d(\cG)}) = \hat{\pi}^{-1} (\d_P(\tau))$ is isomorphic to $\d_{E\cK}$ applied to the dual of the canonical trivialisation of $e_1^{-1}(\cG)$, which is precisely $\d_{E\cK}(\tau)$. This allows us to define the section $s$ as the pullback of $\theta$ by $e_2$.

It only remains to show that $\d_{E\cK} (s) = 1$ as a section of $\d_{E\cK}(A_T)$, which is the descent of the bundle $\d_{E\cK}^2(\tau \otimes e_1^{-1}(L))$ to $E\cK(P)_ 3= P \times PG^3 \times \widehat{\Omega G}^3$.
Notice that since $(\cH, L, \theta)$ is a trivialisation of the Chern--Simons bundle 2-gerbe we have $\d_P(\theta) = a$, where $a$ is the section of $A_M$ from Section \ref{SS:CS}. Further, since $s = e_2^{-1}(\theta)$, we have $\d_{E\cK} (s) = \d_{E\cK} (e_2^{-1}(\theta)) = e_3^{-1}(\d_P(\theta)) = e_3^{-1}(a)$. 
Therefore, we need only show that $e_3^{-1}(a)$ is the canonical trivialisation of $\d_{E\cK}^2(\tau \otimes e_1^{-1}(L))$ under the isomorphism induced by Lemma \ref{L:total space triv}. 
In fact, since $a$ is a trivialisation of $A_M = \d_P(M) = \d_P(M) \otimes \d_P^2(L)$, the section $\d_P(\theta)$ induces the canonical trivialisation of $\d_P^2(L)$ and so it suffices to check that $e_3^{-1}(a)$ induces the canonical trivialisation of $\d_{E\cK}^2(\tau)$. 
Notice however, that the canonical trivialisation of $\d_{E\cK}^2(\tau )$ involves pairing up factors of $\widehat{\Omega G}$ and $\widehat{\Omega G}^*$, whereas $a$ is an \xm-morphism. To see that these are the same consider a crossed module $\hat K \xrightarrow{t} L$. We can factorize the constant map $K \to K;\, x \mapsto 1$ as 
$$
K \xrightarrow{\Delta} K \times K \xrightarrow{1 \times i} K \times K \xrightarrow{m} K,
$$
where $\Delta$ is the diagonal map, and $i$ and $m$ are inversion and multiplication in $K$, respectively. Then $(m \circ (1 \times i) \circ \Delta)^{-1}(K)$ is canonically trivial. However, we also have that $(m \circ (1 \times i) \circ \Delta)^{-1}(K)$ is isomorphic to $\hat K \otimes \hat K^*$, which is canonically trivial. We have the following trivial result

\begin{lemma}\label{L:canonical section}
The two trivialisations of $\hat K \otimes \hat K^*$ given above are equal.
\end{lemma}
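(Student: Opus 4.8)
The plan is to unwind both trivialisations fibrewise and compare them on a single element of each fibre, exploiting that every map in sight is $U(1)$-equivariant so that agreement at one point forces agreement everywhere. First I would make the three structural maps explicit in terms of the group operations of the central extension $\hat{K} \to K$ (with $\ker t = U(1)$): the multiplicativity isomorphism \ref{E:mult}, evaluated on the pair $(k, k^{-1})$, is just the group product $\hat{K}_k \otimes \hat{K}_{k^{-1}} \to \hat{K}_{kk^{-1}} = \hat{K}_1$, $\hat{a} \otimes \hat{b} \mapsto \hat{a}\hat{b}$ (which is $U(1)$-bilinear by centrality, hence descends to the tensor product); the inversion isomorphism \ref{E:inv}, read as $\hat{K}_k^* \xrightarrow{\sim} \hat{K}_{k^{-1}}$, is the linear map induced by the anti-equivariant group inverse $\hat{k} \mapsto \hat{k}^{-1}$ (anti-equivariant since $(u\hat{k})^{-1} = u^{-1}\hat{k}^{-1}$); and the trivialisation of $\hat{K}_1$ is the one determined by the identity $e_{\hat K} \in \hat{K}_1$, which lies in that fibre because $t(e_{\hat K}) = 1$.

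With these identifications, I would evaluate both trivialisations on the canonical section. For any $\hat{k} \in \hat{K}_k$ the element $\hat{k} \otimes \hat{k}^* \in \hat{K}_k \otimes \hat{K}_k^*$ is independent of the choice of $\hat{k}$, since $(u\hat{k}) \otimes (u\hat{k})^* = \hat{k} \otimes \hat{k}^*$; this is precisely the canonical evaluation section, on which the evaluation trivialisation returns $\hat{k}^*(\hat{k}) = 1$. Tracing the same element through the crossed-module trivialisation, \ref{E:inv} sends $\hat{k}^* \mapsto \hat{k}^{-1}$, the product \ref{E:mult} sends $\hat{k} \otimes \hat{k}^{-1} \mapsto \hat{k}\hat{k}^{-1} = e_{\hat K}$, and the identity-element trivialisation of $\hat{K}_1$ sends $e_{\hat K} \mapsto 1$. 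Thus the two trivialisation isomorphisms $\hat{K} \otimes \hat{K}^* \to \underline{U(1)}$ agree on $\hat{k} \otimes \hat{k}^*$, and since both are morphisms of $U(1)$-bundles this forces them to coincide.

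The conceptual point — and the reason this lemma cannot simply be subsumed into Lemma \ref{L:uniqueness} — is that the canonical evaluation trivialisation of $\hat{K} \otimes \hat{K}^*$ is a \emph{formal} trivialisation of a line bundle that makes no reference to the structural maps \ref{E:mult}--\ref{E:inv}, so it is not an \xm-morphism and uniqueness does not apply directly; an explicit comparison is genuinely needed. Correspondingly, the only step requiring care is confirming that the inversion isomorphism \ref{E:inv} is the one induced by the group inverse $\hat{k} \mapsto \hat{k}^{-1}$ of the central extension, which is exactly where the central-extension hypotheses on $\hat{K} \to K$ enter. Once that identification is granted, the one-line computation above closes the argument.
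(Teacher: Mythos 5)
Your proof is correct, and it takes the only natural route: the paper itself offers no argument for this lemma (it is simply declared ``trivial''), so your explicit fibrewise check --- that both trivialisations send $\hat k\otimes\hat k^*$ to $1\in U(1)$, using centrality for \eqref{E:mult}, anti-equivariance of inversion for \eqref{E:inv}, and $\hat K_1=\ker t=U(1)$ --- is exactly the verification the authors leave to the reader. Your remark that the evaluation trivialisation is not \emph{a priori} an \xm-morphism, so that Lemma~\ref{L:uniqueness} cannot be invoked directly, correctly identifies why this lemma is needed at all.
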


The point is that both the canonical trivialisation of $\d_{E\cK}^2(\tau )$ (by Lemma \ref{L:canonical section}) and the trivialisation given by $e_3^{-1}(a)$ are \xm-morphisms as in Definition \ref{D:xm}, and therefore are equal by Lemma \ref{L:uniqueness}. So Lemma \ref{L:canonical section} tells us that the section $e_3^{-1}(a)$ (and hence $\d_{E\cK}(s)$) agrees with the canonical section of $\d_{E\cK}^2(\tau \otimes e_1^{-1}(L))$ (and hence $\d_{E\cK}(A_T)$). Therefore we have our main result 

\begin{theorem}\label{th:string}
Let $P \to X$ be a principal $G$-bundle and let $(\cH, L, \theta)$ be a trivialisation of the Chern--Simons bundle 2-gerbe of $P$. Then $\cH$ has a simplicial extension over the nerve of the action 2-groupoid of the induced String group action on $P$, given by $(T,s)$ constructed above.
\end{theorem}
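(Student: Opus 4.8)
The plan is to verify the three defining conditions of a simplicial extension $(Y_\bullet, T, s)$ of $\cH$ over $E\cK(P)_\bullet$: a simplicial surjective submersion $Y_\bullet \to E\cK(P)_\bullet$, a trivialisation $T$ of $\delta_{E\cK}(\cH)$ over $E\cK(P)_1$, and a section $s$ of $A_T$ over $E\cK(P)_2$ satisfying $\delta_{E\cK}(s) = 1$. All three ingredients have in effect been assembled in the construction preceding the statement, so the proof amounts to checking that they fit the definition. First I would take $Y_\bullet = \mu^{-1}(Z^{\bullet+1}) \to E\cK(P)_\bullet$, where $Z \to P$ is the surjective submersion of $\cH = (Q,Z)$, and set $T = \tau \otimes e_1^{-1}(L)$, with $\tau$ the canonical trivialisation of $e_1^{-1}(\cG)$ from Lemma \ref{L:object space} and $L \colon \cG \to \delta_P(\cH)$ the stable isomorphism from the given trivialisation. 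Because $e$ is a simplicial map, $e_1^{-1}(\delta_P(\cH)) \cong \delta_{E\cK}(\cH)$, so $T$ does trivialise $\delta_{E\cK}(\cH)$.

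Next I would produce the section by pulling back $\theta$: set $s = e_2^{-1}(\theta)$, using the canonical identification $e_2^{-1}(M \otimes \delta_P(L)) \cong \delta_{E\cK}(\tau \otimes e_1^{-1}(L)) = A_T$. Splitting off the $e_1^{-1}(L)$ factors via the simplicial property of $e$ reduces this to the identification $e_2^{-1}(M) \cong \delta_{E\cK}(\tau)$, which is precisely Lemma \ref{L:total space triv} applied to the factorisation $e_2 = \pi \circ \hat{\pi}$ through $\mu^{-1}(P \times PG)_2$. Since $M \otimes \delta_P(L)$ carries the section $\theta$ by hypothesis, $s = e_2^{-1}(\theta)$ is a well-defined section of $A_T$.

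Finally I would check $\delta_{E\cK}(s) = 1$. As $(\cH, L, \theta)$ trivialises the Chern--Simons bundle 2-gerbe we have $\delta_P(\theta) = a$, and naturality of the $\delta$ operation under the simplicial map $e$ gives $\delta_{E\cK}(s) = \delta_{E\cK}(e_2^{-1}(\theta)) = e_3^{-1}(\delta_P(\theta)) = e_3^{-1}(a)$. It then remains to confirm that $e_3^{-1}(a)$ is the canonical trivialising section of $\delta_{E\cK}^2(\tau \otimes e_1^{-1}(L))$. Here the decisive step is Lemma \ref{L:uniqueness}: both $e_3^{-1}(a)$ and the canonical section are \xm-morphisms of \xm-bundles (the latter by Lemma \ref{L:canonical section}), and since there is at most one \xm-morphism between any two \xm-bundles, they must coincide.

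I expect the main obstacle to be the bookkeeping of the web of canonical isomorphisms---ensuring that the identifications of $A_T$, of the two trivialisations, and of the two sections are mutually consistent across the intricate face maps of Table \ref{table:nerve_of_action_2-groupoid} and equations \eqref{E:d012}--\eqref{E:d0123}. The key insight that dissolves this obstacle is the rigidity supplied by the theory of \xm-bundles: rather than tracking the face maps and checking coherence by hand, one observes that every bundle in sight is an \xm-bundle and every structural map is an \xm-morphism, so Lemma \ref{L:uniqueness} forces all the relevant diagrams to commute automatically and identifies $e_3^{-1}(a)$ with the canonical section. This reduces the whole verification to the routine assembly described above.
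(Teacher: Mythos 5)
Your proposal is correct and follows essentially the same route as the paper: the trivialisation $T = \tau \otimes e_1^{-1}(L)$ built from Lemma \ref{L:object space}, the section $s = e_2^{-1}(\theta)$ identified via Lemma \ref{L:total space triv} applied to the factorisation $e_2 = \pi\circ\hat\pi$, and the verification $\d_{E\cK}(s) = e_3^{-1}(a) = 1$ forced by Lemmas \ref{L:canonical section} and \ref{L:uniqueness}. The emphasis on the rigidity of \xm-morphisms as the device that collapses all the coherence checking is exactly the paper's own strategy.
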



\appendix

\section{Descent for trivialisations}
\begin{proposition}
\label{prop:descent-trivialisations}
Assume that $(P, Y)$ is a bundle gerbe over $M$ and that $\phi \colon X \to Y$ is morphism 
of surjective submersions over $M$.  Then if $T \to Y$ is a trivialisation of $(\phi^{-1}(P), X)$ there
is a trivialisation $\phi(T) \to Y$ with the property that $\phi^{-1}(\phi(T)) \to X$ is isomorphic to $T$ as a trivialisation of $(\phi^{-1}(P), Y)$.
\end{proposition}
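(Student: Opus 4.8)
The plan is to give an explicit fibrewise formula for $\phi(T)$ and recognise it as a descent, rather than to appeal to the abstract fact that a bundle gerbe with vanishing Dixmier--Douady class is trivialisable. Write $\pi_X\colon X\to M$ and $\pi_Y\colon Y\to M$ for the two surjective submersions, so that $\pi_Y\circ\phi=\pi_X$, and recall that $T\to X$ being a trivialisation of $(\phi^{-1}(P),X)$ means we are given an isomorphism $\alpha\colon \delta_X(T)\xrightarrow{\sim}\phi^{-1}(P)$ over $X^{[2]}$ compatible with the bundle gerbe product; concretely $\alpha_{(x_0,x_1)}\colon T_{x_0}^*\otimes T_{x_1}\xrightarrow{\sim}P_{(\phi(x_0),\phi(x_1))}$. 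I will also use that $P$ restricted to the diagonal of $Y^{[2]}$ is canonically trivial via the identity section of the bundle gerbe product.

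First I would define, for $y\in Y$ over $m=\pi_Y(y)$ and any $x\in X$ over the same $m$, the one--dimensional space
\[
\phi(T)_y := T_x\otimes P_{(\phi(x),\,y)},
\]
which makes sense since $(\phi(x),y)\in Y^{[2]}$. The first key step is independence of the choice of $x$: for a second point $x'$ over $m$ the isomorphism $\alpha_{(x,x')}$ gives $T_{x'}\simeq T_x\otimes P_{(\phi(x),\phi(x'))}$, and composing with the bundle gerbe product $P_{(\phi(x),\phi(x'))}\otimes P_{(\phi(x'),y)}\to P_{(\phi(x),y)}$ identifies $T_{x'}\otimes P_{(\phi(x'),y)}$ with $T_x\otimes P_{(\phi(x),y)}$. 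Cleanly, this is descent along the surjective submersion $\pr_Y\colon X\times_M Y\to Y$ (a base change of $\pi_X$, so no hypothesis on $\phi$ beyond being a morphism over $M$ is needed) of the line bundle $(x,y)\mapsto T_x\otimes P_{(\phi(x),y)}$, the descent datum over $X\times_M X\times_M Y$ being supplied by $\alpha$ and the product; the cocycle condition over the triple fibre product is exactly compatibility of $\alpha$ with the product together with associativity of the bundle gerbe product. Local sections of $\pi_X$ then exhibit $\phi(T)$ as a smooth line bundle on $Y$.

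Next I would verify the two required properties. For $(y_0,y_1)\in Y^{[2]}$, fixing $x$ over $m$ and cancelling the $T_x$ factors,
\[
\delta_Y(\phi(T))_{(y_0,y_1)}=\phi(T)_{y_0}^*\otimes\phi(T)_{y_1}\simeq P_{(\phi(x),y_0)}^*\otimes P_{(\phi(x),y_1)}\simeq P_{(y_0,y_1)},
\]
where the last step uses $P^*_{(a,b)}\simeq P_{(b,a)}$ and the product; independence of $x$ and compatibility with the product of $P$ follow again from associativity, so $\phi(T)$ is a genuine trivialisation of $(P,Y)$. For the compatibility $\phi^{-1}(\phi(T))\simeq T$ one evaluates the definition of $\phi(T)_{\phi(x')}$ with the admissible choice $x=x'$, giving $\phi^{-1}(\phi(T))_{x'}=T_{x'}\otimes P_{(\phi(x'),\phi(x'))}\simeq T_{x'}$ via the canonical diagonal trivialisation of $P$, and one checks this identification intertwines $\alpha$, i.e.\ is an isomorphism of trivialisations of $(\phi^{-1}(P),X)$.

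The main obstacle I anticipate is organisational rather than conceptual: one must set up every isomorphism so that it is natural and respects the bundle gerbe product and the dualities, and in particular verify the cocycle condition in the descent step and the product-compatibility of the trivialisation isomorphism $\delta_Y(\phi(T))\simeq P$. Both reduce to associativity of the product together with the defining compatibility of $\alpha$, and smoothness is routine once one works over local sections of $\pi_X$. It is worth noting that this explicit formula is precisely what is used in the applications (e.g.\ the computation of $T_{(y_0,y_1)}=U(1)\otimes\delta(P)_{(y,yg)(y_0,y_1)}$ in Proposition~\ref{P:two descents are the same}), which is why I prefer it to the shorter but less informative argument via the triviality of the Dixmier--Douady class.
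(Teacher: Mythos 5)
Your proposal is correct and follows essentially the same route as the paper: you construct the same line bundle $S_{(x,y)} = T_x \otimes P_{(\phi(x),y)}$ over $X\times_M Y$, descend it to $Y$ using the trivialisation isomorphism and the bundle gerbe product (with the cocycle condition coming from associativity and compatibility of $\alpha$ with the product), and verify the trivialisation property and the identification $\phi^{-1}(\phi(T))_x = T_x\otimes P_{(\phi(x),\phi(x))}\simeq T_x$ exactly as in the paper's appendix. The only cosmetic difference is that you phrase the trivialisation of $(P,Y)$ by cancelling the $T_x$ factors directly, whereas the paper packages it as a descended action map $\rho$; these are the same verification.
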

\begin{proof}
Recall that a trivialisation $R \to Y$ of $(P,Y)$ is an isomorphism $P \to  \d_Y(R)$ which commutes with the bundle gerbe product on $P$ and the trivial bundle gerbe product on $ \d_Y(R)$. 
It is convenient to formulate this in the following way.  For $(y_1, y_2) \in Y^{[2]}$
we have an isomorphism 
\begin{align*}
R_{y_1} &\otimes P_{(y_1, y_2)}  \to R_{y_2}\\
r_1 &\otimes p_{12}  \mapsto r_1 p_{12} 
\end{align*}
and we require that for any $y_1, y_2, y_3 $ we have $(r_1 p_{12}) p_{23} = 
r_1 (p_{12} p_{23})$ where $p_{ij} \in P_{(y_i ,y_j)}$ and $(p_{12} p_{23})$ denotes the
bundle gerbe product. 

So if $T \to X$ is a trivialisation of $\phi^{-1}(P) \to X^{[2]}$ then 
we have 
$$
T_{x_1} P_{(\phi(x_1) ,\phi(x_2))} \to T_{x_2} ,
$$ 
with the corresponding condition on compatibility with the bundle gerbe product. 
We define $S \to X \times_M Y$ by $S_{(x, y)} = T_x \otimes P_{(\phi(x), y) }$. 
We want to show that $S$ descends to a bundle $\phi(T) \to Y$ and to this end we define  
$$
\phi_{x_2 x_1} \colon S_{x_1, y} \to S_{x_2 ,y}
$$ 
by $\phi_{x_2 x_1}( t_1 \otimes q_1 ) = (t_1 p_{12}) \otimes (p_{12}^* q_1)$
where $t_1 \in T_{x_1}$, $q_1 \in P_{(\phi(x) ,y) }$ and the definition involves
the choice of $p_{12} \in P_{(\phi(x_1), \phi(x_2))}$.  It is clearly independent of this choice and the choices representing the element in $S_{(x_1 ,y)}$.   We need 
to check that $\phi_{x_3 x_2} \phi_{x_2 x_1} = \phi_{x_3 x_1}$ and making 
appropriate choices of elements in the various spaces we have
\begin{align*}
\phi_{x_3 x_2} \phi_{x_2 x_1} ( t_1 \otimes q_1) & = \phi_{x_3 x_2}(t_1 p_{12}) \otimes (p_{12}^* q_1) \\
&= (t_1 p_{12} ) p_{23} \otimes p_{23}^* ( p_{12}^* q_1 ) \\
& = \phi_{x_3 x_1} ( t_1 \otimes q_1 )
\end{align*}
as required. 

Now define 
$$
\rho_{x  y_1 y_2} \colon S_{(x ,y_1)} \otimes P_{(y_1, y_2)} \to S_{(x, y_2)}
$$
by $\rho_{x y_1 y_2}( s_1\otimes q_{12}) = t_1 \otimes q_1 q_{12}$ where $s_1 = t_1 \otimes q_1$.
We want to show that 
$$ 
\xymatrixcolsep{5pc}\xymatrix{ 
S_{(x_1 ,y_1)} \otimes P_{(y_1, y_2)} \ar[d]_{\phi_{x_2 x_1} \otimes 1}  \ar[r]^-{\rho_{x_1 y_1 y_2}} & S_{(x_1 ,y_2)} \ar[d]^{\phi_{x_2 x_1}} \\ 
S_{(x_2 ,y_1)} \otimes P_{(y_1 ,y_2)} \ar[r]_-{\rho_{x_2 y_1 y_2}} & S_{(x_2 ,y_2)} } 
$$ 
commutes. To see this we note that if $s_1 = t_1 \otimes q_1$ then 
\begin{align*}
\rho_{x_2 y_1 y_2} \circ (\phi_{x_2x_1} \otimes 1)  (s_1 \otimes q_{12})& = \rho_{x_2 y_1 y_2} ( (t_1 p_{12} \otimes p_{12}^* q_1) \otimes q_{12}) \\
& = t_1 p_{12} \otimes ((p_{12}^* q_1) q_{12})\\
& = t_1 p_{12} \otimes p_{12}^* (q_1 q_{12})\\
& = \phi_{(x_2x_1)}(t_1 \otimes (q_1 q_{12})) \\
& = \phi_{(x_2x_1)} \circ \rho_{x_1 y_1 y_2}(s_1 \otimes q_{12}).
\end{align*}
Hence this map descends to give an isomorphism
$$
\phi(T)_{y_1} \otimes P_{(y_1 ,y_2)} \to \phi(T)_{y_2}
$$
which we write as $s_1 \otimes q_{12} \mapsto s_1 q_{12} $ and we have to 
check that $(s_1 q_{12}) q_{23} =  s_1 (q_{12} q_{23})$. We have
$(s_1 q_{12}) q_{23} = (t_1 \otimes q_1 q_{12} ) q_{23} = 
t_1 \otimes (q_1 q_{12} ) q_{23} = t_1 \otimes q_1 (q_{12} q_{23} )
= s_1 (q_{12} q_{23})$,
as required.

Finally notice that the pullback of $\phi(T)$ is 
$$
\phi^{-1}(\phi(T))_x = \phi(T)_{\phi(x)} = S_{(x, \phi(x))} = 
T_x P_{(\phi(x), \phi(x))} = T_x,
$$
as required. 
\end{proof}

\section{Calculations supporting the proof of Proposition \ref{thm:equivariant gerbe on U(n)}} \label{app:long calc for theorem 5.2}

We prove the equation $\delta(f) - d\beta = \pi^*(\omega)$.  Our strategy, as in 
\cite{MurSte}, is to transfer the problem to the more convenient space $G/T\times Y_T$, where 
$T$ is the subgroup of diagonal matrices in $G = U(n)$ and $Y_T = (T\times Z)\cap Y$.  Recall the 
canonical map $p_Y\colon G/T\times Y_T\to Y$ defined by $(gT,(t,z)) = (gtg^{-1},z)$.  This map is 
$G$-equivariant, for the right action of $G$ on $Y$ by conjugation, if we make $G$ act on the 
right of $G/T\times Y_T$ by $(gT,(t,z))\cdot h = (h^{-1}gT,(t,z))$.  By Lemma 6.3 
of \cite{MurSte}, the induced map $p_Y^*\colon \Omega^*(Y)\to \Omega^*(G/T\times Y_T)$ on forms 
is injective.  Therefore it suffices to prove that $\delta(p_Y^*(f)) = dp_Y^*(\beta) = \pi^*(p_Y^*(\omega))$ 
in $\Omega^2(G/T\times Y_T^{[2]})$.  

Recall that we may identify a point in $G/T$ with a family of orthogonal projections 
$P_1,\ldots,P_n$ where $P_iP_j = 0$ if $i\neq j$ and $\sum_i P_1 = 1$.  We identify 
a point in $G/T\times Y_T$ with a triple $(P,\lambda,z)$, where $P = (P_1,\ldots,P_n)$ 
is a family of orthogonal projections as above, $\lambda = (\lambda_1,\ldots,\lambda_n)\in T$ 
with $z\neq \lambda_i$ for all $i$.  Under this identification the right action of 
$G$ is $(P,\lambda,z)\cdot h = (h^{-1}Ph,\lambda,z)$.  We regard the $\lambda_i$ as the eigenvalues of a 
unitary matrix $g$ and the $P_i$ as the orthogonal projections onto the $\lambda_i$-eigenspace.  
Under this interpretation, the map $G/T\times Y_T\to Y$ is the map which sends 
\[
(P,\lambda,z)\mapsto (g,z),\quad \text{where}\ g = \sum^n_{i=1}\lambda_iP_i.  
\]   

From equation (B.4) in \cite{MurSte} we have the following expression for the curving $p_Y^*(f)$: 
\[
p_Y^*(f) = \frac{i}{4\pi}\sum_{i\neq k}\left(\log_z\lambda_i - \log_z\lambda_k + \frac{\lambda_k-\lambda_i}{\lambda_k}\right)
\tr(P_idP_kdP_k).  
\]
A little calculation yields that 
\[
\delta(p_Y^*(f)) = \frac{i}{4\pi}\sum_{i\neq k}A_{ik}\left(\tr(P_i[P_k,\theta_h]dP_k )+ \tr(P_idP_k[P_k,\theta_h]) + 
\tr(P_i[P_k,\theta_h][P_k,\theta_h])\right),
\]
where we have set $A_{ik} = \log_z\lambda_i - \log_z\lambda_k + (\lambda_k -\lambda_i)\lambda_i^{-1}$ 
and $\theta_h = dhh^{-1}$.  
Using the fact that $P_iP_k=0$ for $i\neq k$ and $dP_k = P_kdP_k + dP_kP_k$ we obtain 
\[
\tr(P_i[P_k,\theta_h]dP_k) = - \tr(\theta_hdP_kP_i).  
\]
Similarly we obtain 
\begin{align*} 
& \tr(P_idP_k[P_k,\theta_h]) = -\tr(\theta_h P_idP_k) \\ 
& \tr(P_i[P_k,\theta_h][P_k,\theta_h]) = -\tr(P_i\theta_hP_k\theta_h).  
\end{align*} 
Hence our expression for $\delta(p_Y^*(f))$ becomes 
\[
\delta(p_Y^*(f)) = -\frac{i}{4\pi}\sum_{i\neq k} A_{ik}\left(\tr(dP_kP_i\theta_h) + \tr(P_idP_k\theta_h) + \tr(P_i\theta_hP_k\theta_h)\right). 
\]
This splits up into the sum of two terms: 
\begin{equation} 
\label{app:eq1} 
-\frac{i}{4\pi}\sum_{i\neq k}(\log_z\lambda_i - \log_z\lambda_k)\left(\tr(\theta_hdP_kP_i) + 
\tr(\theta_hP_idP_k) + \tr(P_i\theta_hP_k\theta_h)\right) 
\end{equation} 
and 
\begin{equation} 
\label{app:eq2} 
-\frac{i}{4\pi}\sum_{i\neq k}(1-\lambda_i\lambda_k^{-1})\left(\tr(\theta_hdP_kP_i) + \tr(\theta_hP_idP_k) + \tr(P_i\theta_hP_k\theta_h)\right) .
\end{equation} 
We simplify the term~\eqref{app:eq1}.  Using the fact that $\sum_iP_i = I$ and $\sum_idP_i = 0$ we have 
\[
\sum_{i,k}\log_z\lambda_i\left(\tr(\theta_hdP_kP_i) + \tr(\theta_hP_idP_k)\right) = 0.   
\]
Therefore, 
\begin{align*} 
& \phantom{-}\sum_{i\neq k} \log_z\lambda_i\left(\tr(\theta_hdP_kP_i) + \tr(\theta_hP_idP_k)\right) \\ 
= & - \sum_i\log_z\lambda_i\left(\tr(\theta_hdP_iP_i) + \tr(\theta_hP_idP_i)\right) \\ 
= & -\sum_i\log_z\lambda_i\tr(\theta_hdP_i).  
\end{align*} 
using $dP_iP_i + P_idP_i = dP_i$.  Similarly we have 
\[
\sum_{i\neq k}\log_z\lambda_k\left(\tr(\theta_hdP_kP_i) + \tr(\theta_hP_idP_k)\right) = \sum_k\log_z\lambda_k\tr(\theta_hdP_k).  
\]
For the remaining terms in~\eqref{app:eq1} we have 
\[
\sum_{i,k}\log_z\lambda_i\tr(P_i\theta_hP_k\theta_h) = \sum_i\log_z\lambda_i\tr(P_i\theta_h\theta_h).  
\]
Hence 
\begin{align*} 
\sum_{i\neq k} \log_z\lambda_i\tr(P_i\theta_hP_k\theta_h)  & = \sum_i\log_z\lambda_i\tr(P_i\theta_h\theta_h) - 
\sum_i\log_z\lambda_i\tr(P_i\theta_hP_i\theta_h) \\ 
&= \sum_i\log_z\lambda_i\tr(P_i\theta_h\theta_h), 
\end{align*} 
since $\tr(P_i\theta_hP_i\theta_h) = 0$.  Similarly we have 
\[
\sum_{i\neq k}\log_z\lambda_k\tr(P_i\theta_hP_k\theta_h) = - \sum_k\log_z\lambda_k\tr(P_k\theta_h\theta_h) .
\]
Therefore the term~\eqref{app:eq1} reduces to 
\begin{equation}
\label{app:eq3} 
-\frac{i}{2\pi}\sum_i\log_z\lambda_i\left(\tr(P_i\theta_h\theta_h) - \tr(\theta_hdP_i)\right) .
\end{equation}   
We compare the term~\eqref{app:eq3} with $dp_Y^*\beta$.  We have from~\eqref{eq:eqn for beta} 
\[
p_Y^*\beta = -\frac{i}{2\pi}\sum_i\log_z\lambda_i \tr(\theta_hP_i) 
\]
and hence 
\begin{equation}
\label{app:eq4}
dp_Y^*(\beta)  = -\frac{i}{2\pi} \sum_i\log_z\lambda_i\left(\tr(-\theta_hdP_i) + \tr(\theta_h\theta_hP_i)\right) 
- \frac{i}{2\pi}\sum_i\frac{d\lambda_i}{\lambda_i}\tr(\theta_hP_i)
\end{equation} 
Comparing~\eqref{app:eq3} and~\eqref{app:eq4} we obtain the following expression for $\delta(p_Y^*(f))-dp_Y^*(\beta)$: 
\begin{multline} 
\label{app:eq5}
\delta(p_Y^*(f))-dp_Y^*(\beta)  = -\frac{i}{4\pi}\sum_{i\neq k}(1-\lambda_i\lambda_k^{-1})\left[ 
\tr(\theta_hdP_kP_i) + \tr(\theta_hP_idP_k) + \tr(P_i\theta_hP_k\theta_h)\right] \\
+ \frac{i}{2\pi}\sum_i\frac{d\lambda_i}{\lambda_i}\tr(\theta_hP_i) .
\end{multline}
We have, using $\sum_kdP_k = 0$, 
\begin{align*} 
& \phantom{-} \sum_{i\neq k} \tr(\theta_hdP_kP_i) + \tr(\theta_hP_idP_k) \\ 
& = - \sum_i\tr(\theta_hdP_iP_i) + \tr(\theta_hP_idP_i) \\ 
& = -\sum_i\tr(\theta_hdP_i) \\ 
& = 0 .
\end{align*} 
Similarly, using $\sum P_k = I$ and $\tr(P_i\theta_hP_i\theta_h) = 0$, we have 
\[
\sum_{i\neq k} \tr(P_i\theta_hP_k\theta_h) = \tr(\theta_h\theta_h) = 0.  
\]
Therefore the expression for $\delta(p_Y^*(f)) - dp_Y^*(\beta)$ in~\eqref{app:eq5} reduces to 
\begin{multline} 
\label{app:eq6} 
\delta(p_Y^*(f)) - dp_Y^*(\beta) = +\frac{i}{4\pi}\sum_{i\neq k} \lambda_i\lambda_k^{-1}\left[
\tr(\theta_hdP_kP_i) + \tr(\theta_hP_idP_k) + \tr(P_i\theta_hP_k\theta_h)\right] \\ 
+ \frac{i}{2\pi} \sum_i \frac{d\lambda_i}{\lambda_i}\tr(\theta_hP_i).  
\end{multline} 
For the term 
\[
\sum_{i\neq k} \lambda_i\lambda_k^{-1}\tr(P_i\theta_hP_k\theta_h) 
\]
appearing in~\eqref{app:eq6} we have, since $\tr(P_i\theta_hP_i\theta_h) = 0$,  
\begin{align*} 
\sum_{i\neq k}\lambda_i\lambda_k^{-1}\tr(P_i\theta_hP_k\theta_h) & = 
\sum_{i,k}\lambda_i\lambda_k^{-1}\tr(P_i\theta_hP_k\theta_h) \\ 
& =\tr(g\theta_hg^{-1}\theta_h) \\ 
& = - \tr(\theta_h\hat{\theta}_h) ,
\end{align*}
where we have set $\hat{\theta}_h = g^{-1}\theta_hg$.  For the term 
\[
\sum_{i\neq k} \lambda_i\lambda_k^{-1}\left[\tr(\theta_hdP_kP_i) + \tr(\theta_hP_idP_k)\right] 
\]
we have, using $dP_iP_i + P_idP_i = dP_i$ and $\sum_i dP_i = 0$, 
\[
\sum_{i\neq k} \lambda_i\lambda_k^{-1}\left[\tr(\theta_hdP_kP_i) + \tr(\theta_hP_idP_k)\right] 
= \sum_k \lambda_k^{-1}\left[\tr(\theta_hdP_k g) + \tr(\theta_hgdP_k)\right] .
\]
Therefore~\eqref{app:eq6} becomes 
\begin{multline*} 
\delta(p_Y^*f) -dp_Y^*\beta = \frac{i}{4\pi}\left\{ -\tr(\theta_h\hat{\theta}_h) + \sum_k \lambda_k^{-1}\left[ 
\tr(\theta_hdP_kg ) + \tr(\theta_hgdP_k)\right] \right. \\ 
\left. - 2\sum_kd\lambda_k \lambda_k^{-1}\tr(\theta_hP_k)\right\}  .
\end{multline*} 
We have, using $P_k^2 = P_k$, 
\begin{align*} 
& \sum_k \left(\lambda_k^{-1}\tr(\theta_hdP_k g) - d\lambda_k \lambda_k^{-1}\tr(\theta_hP_k)\right) \\ 
= & \sum_k \lambda_k^{-1}\tr(\theta_hdP_k g) - \lambda_k^{-1}d\lambda_k \lambda_k^{-1} \tr(\theta_hP_k g) \\ 
= & -\tr(\theta_hg^{-1}dg) \\ 
= & -\tr( \theta_h\theta), 
\end{align*} 
using $d(g^{-1}) = \sum_k (\lambda_k^{-1}dP_k -  \lambda_k^{-1}d\lambda_k\lambda_k^{-1}P_k)$, where 
we have set $\theta = g^{-1}dg$.  Similarly, 
\[
\sum_k \left(\lambda_k^{-1}\tr(\theta_hgdP_k) - d\lambda_k \lambda_k^{-1}\tr(\theta_hP_k)\right) = - \tr(\hat{\theta}_h\theta) .
\]
Hence 
\[
\delta(p_Y^*(f)) - dp_Y^*(\beta) = p_Y^*\pi^*\left(\frac{i}{4\pi} \left(\tr(\hat{\theta}_h\theta_h) + 
\tr(\theta \theta_h) + \tr(\theta \hat{\theta}_h)\right)\right).
\]


\end{document}